\documentclass[10pt,a4paper, reqno]{amsart}
\usepackage{cmap}  
\usepackage[T1]{fontenc}
\usepackage[utf8]{inputenc}
\usepackage[english]{babel}
\usepackage{amsmath,amssymb,mathtools}
\usepackage{enumitem}
\usepackage{microtype}
\usepackage[a4paper,margin=1.65cm]{geometry}
\usepackage[
    colorlinks=true,
    linkcolor=black,
    citecolor=blue,
    urlcolor=black
]{hyperref}
\usepackage{lmodern}
\renewcommand{\uppercasenonmath}[1]{}

\newcommand{\Char}{\operatorname{char}}
\newcommand{\id}{\operatorname{id}}
\newcommand{\rad}{\operatorname{rad}}

\newcommand{\Barun}{\operatorname{Bar}}
\newcommand{\Bbar}{\overline{\operatorname{Bar}}}
\newcommand{\HH}{\operatorname{HH}}
\theoremstyle{definition}
\newtheorem{proposition}{Proposition}[section]
\numberwithin{equation}{section}
\newtheorem{theorem}[proposition]{Theorem}
\newtheorem{definition}[proposition]{Definition}
\newtheorem{lemma}[proposition]{Lemma}
\newtheorem{corollary}[proposition]{Corollary}
\newtheorem{remark}[proposition]{Remark}

\title{BV structure on the Hochschild cohomology of Liu--Schulz algebras in characteristic two}
\author{Artur Tursunbaev}

\address{St.~Petersburg State University,
Universitetskii pr.~28,
St.~Petersburg 198504, Russia}

\email{\href{mailto:tursunbaev.art@gmail.com}{\textcolor{blue}{tursunbaev.art@gmail.com}}}
\subjclass[2020]{Primary 16E40; Secondary 16E05}

\keywords{Hochschild cohomology; Batalin--Vilkovisky structure;
Gerstenhaber bracket; Liu--Schulz algebras;
Comparison morphisms; Weak self-homotopy}

\newcommand{\parttitle}[1]{%
  \bigskip
  \begin{center}\large\bfseries #1\end{center}
  \nopagebreak\medskip}
  
\begin{document}

\begin{abstract}
Let $K$ be a field of characteristic $2$, let $\rho\in K^\times$, and let
$R_\rho$ be the corresponding Liu--Schulz algebra. We give a complete description
of the Gerstenhaber bracket and the BV structure on the Hochschild cohomology ring
$\HH^*(R_\rho)$.
\end{abstract}

\maketitle

\section{Introduction}\label{sec:intro}

Let $A$ be an associative algebra over a field $K$. The Hochschild
cohomology $\HH^*(A)$ carries a rich structure: the cup
product makes it a graded commutative algebra, and the graded
Lie bracket of degree $-1$ makes it a
Gerstenhaber algebra~\cite{Ger63}.

The Hochschild cohomology of algebras in certain special classes
carries an additional structure, called a BV structure. It is defined
by an operator $\Delta$ of degree $-1$ with square zero which, together with the
cup product, expresses the Lie bracket. Tradler proved that
the Hochschild cohomology algebra of a finite-dimensional symmetric algebra
is a BV algebra~\cite{Tr08}: a nondegenerate symmetric
associative form defines a duality between Hochschild cohomology and
homology, under which the Connes operator corresponds to $\Delta$.

The practical value of the BV structure is that it reduces the computation of the Gerstenhaber bracket, whose direct computation is often laborious, to the computation of the operator \(\Delta\).

This paper concerns Liu--Schulz algebras.
Let $K$ be a field of characteristic $2$, let $\rho\in K^\times$,
and let $R_\rho$ be the corresponding eight-dimensional
local symmetric algebra.
An additive basis of $\HH^*(R_\rho)$ and a formula for the product
were obtained by Generalov and Kosovskaya~\cite{GK06}
using the minimal projective bimodule
resolution $Q_*$ constructed by Kosovskaya~\cite{Kos05}.

For $\rho\ne1$, the isomorphisms of graded $K$-algebras
asserted in parts~II.2 and~II.3 of Theorem~2.1
in~\cite{GK06} are not valid
(Remark~\ref{rem:GK-presentations}):
the defining relations omit the equalities
$y_0^2=y_1^2=y_2^2=0$ for the generators of degree one.
We prove that adding these relations
gives complete presentations of the ring $\HH^*(R_\rho)$
(Theorem~\ref{thm:ring-presentation}).

The Gerstenhaber bracket and the operator $\Delta$
are defined on the bar resolution, whereas the product formula
was obtained on the minimal resolution. Thus
our computations use explicit comparison
morphisms between the normalized bar resolution
and Kosovskaya's resolution.
They are constructed by a method based
on the notion of a weak self-homotopy~\cite{IIVZ}.

The first part of the paper gives an explicit right
$R_\rho$-linear weak self-homotopy of Kosovskaya's resolution
and constructs comparison morphisms $\Phi_*$ and $\Psi_*$, for which the strict equality $\Psi_*\Phi_*=\id_{Q_*}$ is proved. The second part contains a corrected presentation of the cohomology ring, the computation of the operator $\Delta$
and the Gerstenhaber bracket on the generators, and closed formulas for $\Delta$ on arbitrary products
of generators. The cases $\operatorname{ord}\rho=\infty$ and $1<\operatorname{ord}\rho<\infty$ are treated separately, while the case $\rho=1$ is excluded from the main theorems and considered
in Section~\ref{sec:s1}.

\parttitle{Part I. Weak self-homotopy and comparison morphisms}

\section{Hochschild cohomology}\label{sec:prelim}

The following conventions are used throughout this section:
\begin{itemize}[leftmargin=2em,itemsep=.3em]
\item $K$ is a field, and $A$ is a finite-dimensional associative unital
      $K$-algebra;
\item $A^e=A\otimes_KA^{\operatorname{op}}$ is the enveloping algebra
      of $A$; $A$-bimodules are identified with left
      $A^e$-modules;
\item $DA=\operatorname{Hom}_K(A,K)$ is the dual bimodule with
      structure $(a\varphi b)(x)=\varphi(bxa)$.
\end{itemize}

\subsection{The Hochschild complex}

\begin{definition}\label{def:HH}
For an $A$-bimodule $M$, the Hochschild cohomology groups of $A$ with
coefficients in $M$ are defined by
\[
        \HH^n(A,M)=\operatorname{Ext}^n_{A^e}(A,M),
        \qquad n\ge0 .
\]
\end{definition}

In what follows, we set
\[
        \HH^n(A)=\HH^n(A,A),
        \qquad
        \HH^*(A)=\bigoplus_{n\ge0}\HH^n(A).
\]

These groups are computed using the bar resolution $\Barun_*(A)$, where
$\Barun_r(A)=A^{\otimes(r+2)}$:
\[
        \Barun_*(A):\quad\cdots\to A^{\otimes(r+2)}
        \xrightarrow{\,d_r\,}A^{\otimes(r+1)}\to\cdots\to A^{\otimes2}
        \xrightarrow{\,d_0=\mu\,}A\to0,
\]
where $\mu$ is multiplication in $A$ and
\[
        d_r(a_0\otimes\cdots\otimes a_{r+1})
        =\sum_{i=0}^{r}(-1)^i\,
        a_0\otimes\cdots\otimes a_ia_{i+1}\otimes\cdots\otimes a_{r+1}.
\]

Alongside $\Barun_*(A)$, we use the normalized bar resolution
$\Bbar_*(A)$: set $\overline A=A/(K\cdot1)$,
\[
        \Bbar_r(A)=A\otimes_K\overline A^{\otimes r}\otimes_KA,
\]
and the differential is induced by $d_*$ and, on the generators
of the free $A^e$-module $\Bbar_r(A)$, is given by

\[
b_r\bigl(1\otimes\overline a_1\otimes\cdots\otimes\overline a_r\otimes1\bigr)
=a_1\otimes\overline a_2\otimes\cdots\otimes1
+\sum_{i=1}^{r-1}(-1)^i\,1\otimes\cdots\otimes\overline{a_ia_{i+1}}\otimes\cdots\otimes1
+(-1)^r\,1\otimes\cdots\otimes\overline a_{r-1}\otimes a_r .
\]

Both complexes are $A^e$-projective resolutions of $A$.

Set
\[
        C^*(A)=\operatorname{Hom}_{A^e}\bigl(\Barun_*(A),A\bigr),
        \qquad
        \overline C^*(A)
        =\operatorname{Hom}_{A^e}\bigl(\Bbar_*(A),A\bigr).
\]
The formula
\[
        f\longmapsto\bigl((a_1,\dots,a_n)\mapsto
        f(1\otimes a_1\otimes\cdots\otimes a_n\otimes1)\bigr)
\]
gives the identifications
\[
        C^n(A)\cong\operatorname{Hom}_K(A^{\otimes n},A),
        \qquad
        \overline C^n(A)
        \cong\operatorname{Hom}_K(\overline A^{\otimes n},A),
\]
under which the differential in both cases takes the form
\[
        (\delta^nf)(a_1,\dots,a_{n+1})
        =a_1f(a_2,\dots,a_{n+1})
        +\sum_{i=1}^{n}(-1)^i f(a_1,\dots,a_ia_{i+1},\dots,a_{n+1})
        +(-1)^{n+1}f(a_1,\dots,a_n)a_{n+1}.
\]
We use the normalized complex in what follows.

\subsection{The cup product and the Gerstenhaber bracket}

\begin{definition}\label{def:cup}
For $\alpha\in C^n(A)$ and $\beta\in C^m(A)$, the cup product
$\alpha\smile\beta\in C^{n+m}(A)$ is defined by
\[
        (\alpha\smile\beta)(a_1,\dots,a_{n+m})
        =\alpha(a_1,\dots,a_n)\cdot\beta(a_{n+1},\dots,a_{n+m}).
\]
\end{definition}

The product $\smile$ is compatible with $\delta$ and makes $\HH^*(A)$ a
graded commutative $K$-algebra~\cite[Corollary~1]{Ger63}.

\begin{definition}\label{def:circ}
Let $\alpha\in C^n(A)$ and $\beta\in C^m(A)$. For $n,m\ge1$ and
$1\le i\le n$, the partial insertion $\alpha\circ_i\beta\in C^{n+m-1}(A)$
is defined by
\[
        (\alpha\circ_i\beta)(a_1,\dots,a_{n+m-1})
        =\alpha\bigl(a_1,\dots,a_{i-1},
        \beta(a_i,\dots,a_{i+m-1}),a_{i+m},\dots,a_{n+m-1}\bigr);
\]
for $n\ge1$ and $m=0$, that is, $\beta\in A$,
\[
        (\alpha\circ_i\beta)(a_1,\dots,a_{n-1})
        =\alpha(a_1,\dots,a_{i-1},\beta,a_i,\dots,a_{n-1}),
\]
and in all other cases $\alpha\circ_i\beta=0$. We then set
\[
        \alpha\circ\beta=\sum_{i=1}^{n}
        (-1)^{(m-1)(i-1)}\alpha\circ_i\beta,
        \qquad
        [\alpha,\beta]=\alpha\circ\beta
        -(-1)^{(n-1)(m-1)}\beta\circ\alpha\ \in C^{n+m-1}(A).
\]
\end{definition}

The bracket $[-,-]$ induces a well-defined map
$\HH^n(A)\times\HH^m(A)\to\HH^{n+m-1}(A)$.

\begin{definition}\label{def:Ger}
A \emph{Gerstenhaber algebra} is a triple
$(\mathcal A^\bullet,\smile,[-,-])$ in which
$(\mathcal A^\bullet,\smile)$ is a graded commutative
$K$-algebra and
$[-,-]\colon\mathcal A^n\times\mathcal A^m\to\mathcal A^{n+m-1}$ is a
bilinear map that gives $\mathcal A^{\bullet+1}$
the structure of a graded Lie algebra and satisfies
the Poisson identity
\[
[a,b\smile c]=[a,b]\smile c
+(-1)^{(|a|-1)|b|}b\smile[a,c]
\]
for homogeneous $a,b,c$.
\end{definition}

\begin{theorem}[Gerstenhaber~\cite{Ger63}]
\label{thm:Ger}
For any $K$-algebra $A$, the triple $(\HH^*(A),\smile,[-,-])$ is a
Gerstenhaber algebra.
\end{theorem}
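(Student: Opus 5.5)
The plan follows Gerstenhaber's original argument, working throughout with cochains in $\overline C^*(A)$ (or $C^*(A)$) and the operations $\circ_i$, $\circ$, $\smile$ of Definitions~\ref{def:cup} and~\ref{def:circ}. We first record that $\delta f=\pm[\mu,f]$ up to sign conventions, where $\mu\in C^2(A)$ is the multiplication and $[\mu,\mu]=0$ by associativity. This gives the compatibility of $\smile$ with $\delta$ and the well-definedness of the bracket on cohomology, both already taken for granted in the text above.

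\textbf{Step 1 (pre-Lie identity).} The key structural input is the identity for the associator $A(\alpha,\beta,\gamma)=(\alpha\circ\beta)\circ\gamma-\alpha\circ(\beta\circ\gamma)$. One shows that $A(\alpha,\beta,\gamma)$ is graded symmetric in $\beta,\gamma$ with sign $(-1)^{(|\beta|-1)(|\gamma|-1)}$. To prove it, expand both sides via the $\circ_i$. The terms of $(\alpha\circ\beta)\circ\gamma$ in which $\gamma$ is inserted inside $\beta$ cancel exactly with $\alpha\circ(\beta\circ\gamma)$. The remaining terms, where $\beta$ and $\gamma$ occupy disjoint slots of $\alpha$, are visibly symmetric under swapping $\beta$ and $\gamma$ with the stated sign.

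A formal consequence is that the shifted bracket on $C^{\bullet+1}(A)$ is graded antisymmetric and satisfies the graded Jacobi identity already at cochain level. Combined with $\delta=[\mu,-]$ being a derivation of the bracket (Jacobi with $\mu$), this makes $\HH^{\bullet+1}(A)$ a graded Lie algebra.

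\textbf{Step 2 (graded commutativity and Poisson identity up to homotopy).} This is the main obstacle, because neither property holds on the nose at the cochain level. For commutativity, we use the explicit homotopy
\[
\alpha\smile\beta-(-1)^{nm}\beta\smile\alpha=\pm\bigl(\delta(\alpha\circ\beta)-\delta\alpha\circ\beta\mp\alpha\circ\delta\beta\bigr),
\]
which we verify by expanding $\delta=[\mu,-]$ and using Step 1 with $\gamma=\mu$. For the Poisson identity, we first prove the exact cochain identity
\[
(\alpha\smile\beta)\circ\gamma=(\alpha\circ\gamma)\smile\beta\pm\alpha\smile(\beta\circ\gamma),
\]
since inserting $\gamma$ into a slot lands either in the $\alpha$-part or in the $\beta$-part. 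We then handle the term $\gamma\circ(\alpha\smile\beta)$, which is not a derivation. Here we use Gerstenhaber's homotopy built from the brace-type operation $\gamma\{\alpha,\beta\}$ (insertion of $\alpha$ and $\beta$ into two distinct slots of $\gamma$). Its coboundary equals, up to sign,
\[
\gamma\circ(\alpha\smile\beta)-(\gamma\circ\alpha)\smile\beta-(\pm)\alpha\smile(\gamma\circ\beta)
\]
plus terms vanishing for cocycles.

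\textbf{Step 3 (conclusion).} Combining Steps 1 and 2 on cocycles $\alpha,\beta,\gamma$ yields the Poisson identity of Definition~\ref{def:Ger} in $\HH^*(A)$. The hardest part is the sign bookkeeping in the homotopy of Step 2. I would check it first in low degrees, $n,m\le1$, and then do a general induction on the position of insertions, relying on \cite{Ger63} for the precise sign conventions.
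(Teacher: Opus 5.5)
Your outline is correct and is exactly Gerstenhaber's original argument: the pre-Lie identity for $\circ$, $\delta=\pm[\mu,-]$, the homotopy $\alpha\circ\beta$ for graded commutativity, and the brace homotopy $\gamma\{\alpha,\beta\}$ for the Poisson identity. This is the route the paper relies on, since it gives no proof of its own and simply cites \cite{Ger63}. One point to sharpen in Step~2: the cup-commutator arises as the associator $(\mu\circ\alpha)\circ\beta-\mu\circ(\alpha\circ\beta)$, that is, from disjoint insertions of $\alpha,\beta$ into the two slots of $\mu$, while the pre-Lie identity with $\mu$ in the last position only cancels the remaining terms.
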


\begin{definition}\label{def:sym}
A finite-dimensional $K$-algebra $A$ is called \emph{symmetric} if
$A\cong DA$ as $A^e$-modules; equivalently, if there exists a
$K$-bilinear form $\langle-,-\rangle\colon A\times A\to K$ that is
associative ($\langle ab,c\rangle=\langle a,bc\rangle$),
symmetric ($\langle a,b\rangle=\langle b,a\rangle$), and
nondegenerate.
\end{definition}

The correspondence between the two descriptions is as follows. An isomorphism
$A\xrightarrow{\ \sim\ }DA$ determines the functional $\tau\in DA$ given by the image
of the identity element, and the form
\[
        \langle a,b\rangle=\tau(ab).
\]
Conversely, the form defines the isomorphism $a\mapsto\langle a,-\rangle$.

\medskip

Hochschild homology is computed using the complex
$C_*(A)=A\otimes_{A^e}\Barun_*(A)$;
the identification
$C_r(A)=A\otimes_{A^e}A^{\otimes(r+2)}
\cong A^{\otimes(r+1)}$
transforms the differential into
\[
        \partial_r(a_0\otimes\cdots\otimes a_r)
        =\sum_{i=0}^{r-1}(-1)^i\,
        a_0\otimes\cdots\otimes a_ia_{i+1}\otimes\cdots\otimes a_r
        +(-1)^r\,a_ra_0\otimes a_1\otimes\cdots\otimes a_{r-1}.
\]

This complex carries the Connes operator
$B\colon C_r(A)\to C_{r+1}(A)$,
\[
B(a_0\otimes\cdots\otimes a_r)
=\sum_{i=0}^{r}(-1)^{ir}\,
1\otimes a_i\otimes\cdots\otimes a_r\otimes a_0\otimes\cdots\otimes a_{i-1}
+\sum_{i=0}^{r}(-1)^{ir}\,
a_i\otimes1\otimes a_{i+1}\otimes\cdots\otimes a_r\otimes a_0\otimes\cdots\otimes a_{i-1}.
\]
The operator $B$ is a chain map,
satisfies $B^2=0$, and induces
$B\colon\HH_r(A)\to\HH_{r+1}(A)$.

Set
\[
\overline C_*(A)=A\otimes_{A^e}\Bbar_*(A).
\]
On the normalized complex $\overline C_*(A)$,
the second sum vanishes because it contains
the factor $\overline 1=0$.

\medskip

The form in Definition~\ref{def:sym} induces a duality between
Hochschild cohomology and homology:
\begin{equation}\label{eq:duality}
\operatorname{Hom}_K\bigl(\overline C_*(A),K\bigr)
\cong\operatorname{Hom}_{A^e}\bigl(\Bbar_*(A),DA\bigr)
\cong\operatorname{Hom}_{A^e}\bigl(\Bbar_*(A),A\bigr)
=\overline C^*(A),
\end{equation}

where the first isomorphism is canonical and the second is induced by the isomorphism
of $A^e$-modules $DA\cong A$.

Under~\eqref{eq:duality}, the Connes operator corresponds to the operator
\[
        \Delta\colon\HH^n(A)\longrightarrow\HH^{n-1}(A),
        \qquad n\ge1;
\]
we set $\Delta=0$ on $\HH^0(A)$.

\begin{definition}[\cite{IIVZ}]\label{def:BV}
A \emph{BV algebra} is a Gerstenhaber algebra
$(\mathcal A^{\bullet},\smile,[-,-])$ together with an operator
$\Delta\colon\mathcal A^{\bullet}\to\mathcal A^{\bullet-1}$ of degree
$-1$ such that $\Delta\circ\Delta=0$ and
\[
        [a,b]=-(-1)^{(|a|-1)|b|}
        \bigl(\Delta(a\smile b)-\Delta(a)\smile b
        -(-1)^{|a|}a\smile\Delta(b)\bigr)
\]
for homogeneous $a,b\in\mathcal A^{\bullet}$.
\end{definition}

\begin{theorem}[Tradler {\cite[Theorem~1]{Tr08}}]\label{thm:Tradler}
Let $A$ be a symmetric algebra with form $\langle-,-\rangle$. Then
$\HH^*(A)$, together with the cup product, the Gerstenhaber bracket, and
the operator $\Delta$, is a BV algebra, and for
$\alpha\in C^n(A)$ the cochain $\Delta(\alpha)\in C^{n-1}(A)$ is determined
by the equality
\[
        \bigl\langle\Delta(\alpha)(a_1,\dots,a_{n-1}),\,a_n
        \bigr\rangle
        =\sum_{i=1}^{n}(-1)^{i(n-1)}
        \bigl\langle\alpha(a_i,\dots,a_n,a_1,\dots,a_{i-1}),\,1
        \bigr\rangle,
        \qquad a_1,\dots,a_n\in A.
\]
The same formula holds on the normalized complex.
\end{theorem}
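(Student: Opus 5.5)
Tradler's theorem (Theorem~\ref{thm:Tradler}) is a cited result, so the plan reconstructs its proof in three steps. First, Connes' operator $B$ is transported to cochains via the duality~\eqref{eq:duality}, which gives the displayed formula for $\Delta$. Second, one checks that $\Delta$ is well defined on $\HH^*(A)$ and squares to zero. Third, one verifies the BV identity from Definition~\ref{def:BV}.

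\textbf{Step 1: the formula for $\Delta$.} Write the composite isomorphism~\eqref{eq:duality} explicitly. A cochain $\alpha\in\overline C^n(A)$ corresponds to the functional
\[
\widehat\alpha(a_0\otimes a_1\otimes\cdots\otimes a_n)=\langle a_0,\alpha(a_1,\dots,a_n)\rangle
\]
on $\overline C_n(A)$. Define $\Delta(\alpha)$ by $\widehat{\Delta(\alpha)}=\widehat\alpha\circ B$. Now evaluate on $a_n\otimes a_1\otimes\cdots\otimes a_{n-1}$. On the normalized complex the second sum in $B$ vanishes, so only the cyclic terms $1\otimes a_i\otimes\cdots$ survive. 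Each of these pairs with $\alpha$ as $\langle 1,\alpha(\dots)\rangle$. Relabelling indices and using symmetry of the form gives exactly the stated identity, with the sign $(-1)^{i(n-1)}$ coming from $(-1)^{ir}$ at $r=n-1$. Nondegeneracy of the form shows that this identity determines $\Delta(\alpha)$ uniquely.

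\textbf{Step 2: well-definedness and $\Delta^2=0$.} The identification~\eqref{eq:duality} is an isomorphism of complexes: $\widehat{\delta\alpha}=\pm\widehat\alpha\circ\partial$, using associativity and symmetry of $\langle-,-\rangle$. Since $B$ is a chain map with $B^2=0$, the operator $\Delta$ commutes with differentials up to sign, so it descends to cohomology. It also satisfies $\Delta^2=0$ already on cochains.

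\textbf{Step 3: the BV identity.} This is the main obstacle. The plan follows Tradler's argument rather than a direct brute-force expansion.

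1. Define the cochain
\[
D(\alpha,\beta)=\Delta(\alpha\smile\beta)-\Delta(\alpha)\smile\beta-(-1)^{|\alpha|}\alpha\smile\Delta(\beta).
\]
2. Pair both $D(\alpha,\beta)$ and $[\alpha,\beta]$ against $a_0\otimes\cdots$ through the form, and expand $\Delta$ by Step 1. In $\Delta(\alpha\smile\beta)$ the cyclic rotations fall into two kinds:
   - rotations that keep the arguments of $\alpha$ and $\beta$ in separate contiguous blocks, which cancel against $\Delta(\alpha)\smile\beta$ and $\alpha\smile\Delta(\beta)$ by associativity of the form;
   - rotations that cut through a block, which assemble into terms of the form $\langle \alpha(\dots),\beta(\dots)\rangle$.
3. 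Show that the cross terms equal $\langle[\alpha,\beta],\cdot\rangle$ up to a coboundary. The natural tool is an explicit homotopy $H(\alpha,\beta)$, built as a sum over two-sided insertions $\langle\alpha(\dots,\beta(\dots),\dots),1\rangle$ with cyclically shifted arguments. One checks that $\delta H\pm H\delta$ accounts for the difference between the cross terms and the pairing with $\alpha\circ\beta\mp\beta\circ\alpha$.

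Tracking the signs $(-1)^{(m-1)(i-1)}$ and $(-1)^{i(n-1)}$ through these shifts is the delicate part. I would first verify the cases $|\alpha|,|\beta|\le 1$ by hand to fix the conventions, then do the general index bookkeeping.

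The Gerstenhaber part of the structure is already Theorem~\ref{thm:Ger}. Combining it with $\Delta^2=0$ and the BV identity shows that $\HH^*(A)$ is a BV algebra.
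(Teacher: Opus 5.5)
There is no argument in the paper to compare with, because the theorem is quoted from \cite{Tr08} without proof. Your Steps~1 and~2 are correct. The equality $\widehat{\delta\alpha}=\widehat\alpha\circ\partial$ holds on the nose by associativity and cyclicity of $\tau(x)=\langle x,1\rangle$. Evaluating $\widehat\alpha\circ B$ on $a_n\otimes a_1\otimes\cdots\otimes a_{n-1}$ gives the displayed formula, with the rotation of index $0$ supplying the term $i=n$. (On unnormalized cochains the formula is dual only to the first sum in $B$, so that case needs a separate remark; like the paper, you only need the normalized case.)

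The gap is in Step~3, which is the whole content of the theorem: the cancellation you describe there does not exist. Pair everything with $a_n$ and put $p=|\alpha|$, $q=|\beta|$, $n=p+q$. Every term of $\Delta(\alpha\smile\beta)$ is already a product term $\tau\bigl(\alpha(c_1,\dots,c_p)\beta(c_{p+1},\dots,c_n)\bigr)$, where $c$ is a cyclic rotation of $(a_1,\dots,a_n)$. By contrast,
\[
\bigl\langle(\Delta\alpha\smile\beta)(a_1,\dots,a_{n-1}),a_n\bigr\rangle
=\bigl\langle\Delta\alpha(a_1,\dots,a_{p-1}),\,\beta(a_p,\dots,a_{n-1})\,a_n\bigr\rangle ,
\]
and Tradler's formula feeds $\beta(a_p,\dots,a_{n-1})a_n$ into $\alpha$ as an argument. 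So $\Delta\alpha\smile\beta$ consists only of insertion terms $\tau\bigl(\alpha(\dots,\beta(\dots)a_n,\dots)\bigr)$. Likewise $\alpha\smile\Delta\beta$ gives $\tau\bigl(\beta(\dots,a_n\alpha(a_1,\dots,a_p),\dots)\bigr)$, and $[\alpha,\beta]$ gives $\tau\bigl(\alpha(\dots\beta(\dots)\dots)a_n\bigr)$ and its mirror. Product terms cannot cancel insertion terms by associativity of the form; they are linked only through the cocycle conditions.

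Already for $p=q=1$ in characteristic $2$ you have $\langle\Delta(\alpha\smile\beta)(a_1),a_2\rangle=\tau\bigl(\alpha(a_1)\beta(a_2)\bigr)+\tau\bigl(\alpha(a_2)\beta(a_1)\bigr)$. Meanwhile $\Delta\alpha\smile\beta$ and $\alpha\smile\Delta\beta$ contribute $\tau\bigl(\alpha(\beta(a_1)a_2)\bigr)+\tau\bigl(\beta(a_2\alpha(a_1))\bigr)$. The exact cochain identity is
\[
\bigl\langle\bigl(D(\alpha,\beta)+[\alpha,\beta]\bigr)(a_1),a_2\bigr\rangle
=\tau\bigl((\delta\alpha)(\beta(a_1),a_2)\bigr)+\tau\bigl((\delta\beta)(a_2,\alpha(a_1))\bigr).
\]
For cocycles (derivations), the Leibniz rule splits the two insertion terms into both product terms, which cancel \emph{all} of $\Delta(\alpha\smile\beta)$, plus $\tau\bigl(\alpha(\beta(a_1))a_2\bigr)+\tau\bigl(\beta(\alpha(a_1))a_2\bigr)=\langle[\alpha,\beta](a_1),a_2\rangle$. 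So the bracket comes out of $\Delta\alpha\smile\beta+\alpha\smile\Delta\beta$, not out of ``cut'' rotations of $\Delta(\alpha\smile\beta)$.

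In general, what must be shown is an identity $D(\alpha,\beta)\pm[\alpha,\beta]=\delta H(\alpha,\beta)\pm H(\delta\alpha,\beta)\pm H(\alpha,\delta\beta)$. Here a single homotopy $H$ has to account for all four families of terms at once. Constructing that $H$ and checking the identity with signs is exactly what you leave as ``one checks'', and with the split you propose that check would fail. As it stands, Step~3 does not establish the BV relation; you would have to carry out Tradler's actual computation in \cite{Tr08}.
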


\section{The algebra $R_\rho$ and its minimal resolution}\label{sec:algebra}

Let \(K\) be a field of characteristic \(2\), and let
\(\rho\in K^\times\). We consider the Liu--Schulz algebra (see~\cite{GK06})
\[
        R=R_\rho=K\langle x_0,x_1,x_2\rangle/I,
\]
where \(I\) is the two-sided ideal generated by
\[
        x_i^2,\qquad x_{i+1}x_i+\rho x_ix_{i+1},
        \qquad i=0,1,2,
\]
with indices taken modulo \(3\). Since \(\Char K=2\), the latter relation can be written as
\[
        x_{i+1}x_i=\rho x_ix_{i+1}.
\]
Set
\[
        a=x_0x_1,
        \qquad b=x_1x_2,
        \qquad c=x_2x_0,
        \qquad w=x_0x_1x_2.
\]
In what follows, we use the $K$-basis of $R$ given by
\[
        B=\{1,x_0,x_1,x_2,a,b,c,w\}.
\]

Set
\[
        \Lambda=R^e=R\otimes_K R^{\operatorname{op}}.
\]
The minimal \(\Lambda\)-projective bimodule resolution of \(R\), constructed by Kosovskaya~\cite{Kos05}, is obtained as the totalization of a nonnegative tricomplex \(B_{\bullet,\bullet,\bullet}\). For all \(i,j,k\geq0\), set
\[
        B_{ijk}=\Lambda.
\]
The three differentials of the tricomplex decrease the first, second, and third coordinates, respectively:
\[
        d' : B_{ijk}\to B_{i-1,j,k},
        \qquad
        d'' : B_{ijk}\to B_{i,j-1,k},
        \qquad
        d''' : B_{ijk}\to B_{i,j,k-1}.
\]
In bimodule notation, they are given by
\[
        d'(1_{ijk})=\rho^j x_0 1_{i-1,j,k}+\rho^k1_{i-1,j,k}x_0,
\]
\[
        d''(1_{ijk})=\rho^k x_1 1_{i,j-1,k}+\rho^i1_{i,j-1,k}x_1,
\]
\[
        d'''(1_{ijk})=\rho^i x_2 1_{i,j,k-1}+\rho^j1_{i,j,k-1}x_2.
\]
If the corresponding coordinate is zero, the corresponding term is absent.

The totalization has the form
\[
        Q_n=\operatorname{Tot}(B)_n
        =\bigoplus_{i+j+k=n}\Lambda e_{ijk},
\]
and, for $n\ge1$, the differential of the total complex is
$d_n=d'+d''+d'''$, where
\[
d'(e_{ijk})=
\begin{cases}
\rho^j x_0e_{i-1,j,k}+\rho^k e_{i-1,j,k}x_0,& i>0,\\
0,& i=0,
\end{cases}
\]
\[
d''(e_{ijk})=
\begin{cases}
\rho^k x_1e_{i,j-1,k}+\rho^i e_{i,j-1,k}x_1,& j>0,\\
0,& j=0,
\end{cases}
\]
\[
d'''(e_{ijk})=
\begin{cases}
\rho^i x_2e_{i,j,k-1}+\rho^j e_{i,j,k-1}x_2,& k>0,\\
0,& k=0.
\end{cases}
\]

For $n=0$, set $d_0=\varepsilon$, where
$\varepsilon\colon Q_0=\Lambda e_{000}\to R$ is the
augmentation, $\varepsilon(a\,e_{000}\,b)=ab$, 

As a right \(R\)-module, \(Q_n\) has basis
\[
        \{\beta e_{ijk}\mid \beta\in B,\ i+j+k=n\}.
\]

The aim of the construction is to give explicit formulas for a right \(R\)-linear weak self-homotopy
\[
        t_n:Q_n\longrightarrow Q_{n+1}
\]
of this resolution. These maps are then used to construct comparison morphisms, transfer Tradler's BV operator to the minimal resolution, and compute the Gerstenhaber bracket on the algebra \(\HH^*(R)\).

\section{Preliminaries on comparison morphisms}\label{sec:scheme}

Let $A$ be a $K$-algebra,
\[
        A^e=A\otimes_KA^{\operatorname{op}},
\]
and let $P_*$ be an $A^e$-projective resolution of $A$. Alongside
$P_*$, we consider the normalized bar resolution $\Bbar_*(A)$ from
Section~\ref{sec:prelim}.

\medskip

Let $B_A\subset A$ be a set whose images form a $K$-basis
of $\overline A$. Then the elements
\[
        1\otimes\overline b_1\otimes\cdots\otimes
        \overline b_n\otimes1,
        \qquad b_1,\dots,b_n\in B_A,
\]
form a basis of $\Bbar_n(A)$ as a free $A^e$-module.

Recall~\cite{BZZ09} that a weak self-homotopy of the complex
\[
        \cdots\longrightarrow C_n\xrightarrow{\ d_n\ }C_{n-1}
        \longrightarrow\cdots\longrightarrow
        C_0\xrightarrow{\ d_0\ }N\longrightarrow 0
\]
is a collection of \(K\)-linear maps \(t_n:C_n\to C_{n+1}\),
\(n\geq 0\), and \(t_{-1}:N\to C_0\), satisfying
\[
        d_0t_{-1}=\id_N,
        \qquad
        t_{n-1}d_n+d_{n+1}t_n=\id_{C_n},\quad n\geq 0.
\]
The complex \(\Bbar_*(A)\), augmented by \(b_0\), admits a right
\(A\)-linear weak self-homotopy \(s_*\):
\[
        s_{-1}(r)=1\otimes r,
        \qquad
        s_n(a_0\otimes\overline a_1\otimes\cdots\otimes\overline a_n
        \otimes 1)
        =1\otimes\overline a_0\otimes\overline a_1\otimes\cdots\otimes
        \overline a_n\otimes 1,
\]
extended by right \(A\)-linearity. If \(a_0\in K\cdot 1\), then
\(\overline a_0=0\) and the right-hand side is zero; hence
\[
        s_{n+1}s_n=0,\qquad n\geq -1.
\]

\begin{proposition}[{\cite[Proposition~2.2]{IIVZ}}]\label{prop:lift}
Let \(P_*\) be a resolution of a module \(M\) by free modules
with fixed bases \(\{e_{n,i}\}_{i\in X_n}\subset P_n\), let
\(C_*\) be a resolution of a module \(N\) equipped with a weak self-homotopy
\(t_*\), and let \(f:M\to N\) be a module homomorphism. Set
\[
        f_0(e_{0,i})=t_{-1}f d_0^{P}(e_{0,i}),
        \qquad
        f_n(e_{n,i})=t_{n-1}f_{n-1}d_n^{P}(e_{n,i}),\quad n\geq 1.
\]
Then the maps \(f_*\) form a chain map \(P_*\to C_*\)
lifting \(f\).
\end{proposition}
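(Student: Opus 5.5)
We prove the statement by induction on $n$, showing two things at each stage: that $d^C_n f_n = f_{n-1} d^P_n$ (with $f_{-1}=f$ and $d^C_0$ the augmentation of $C_*$), and that $f_n$ is a well-defined module homomorphism. Well-definedness is immediate: $P_n$ is free on the fixed basis $\{e_{n,i}\}_{i\in X_n}$, so prescribing $f_n$ on basis elements and extending by linearity over the module ring (for us, $A^e$) gives a unique homomorphism. The point to keep in mind is that $t_*$ is only $K$-linear, or right $A$-linear; it is not assumed to be a module map. For this reason the defining formula $f_n=t_{n-1}f_{n-1}d_n^P$ is used only on basis elements, and the chain-map identity must be checked on basis elements and then extended by linearity. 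This is legitimate because both sides of the identity are module homomorphisms.

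\emph{Base case $n=0$.} For a basis element $e_{0,i}$ we have
\[
d^C_0 f_0(e_{0,i}) = d^C_0 t_{-1}\bigl(f d^P_0(e_{0,i})\bigr) = f d^P_0(e_{0,i}),
\]
using $d_0t_{-1}=\id_N$. Hence $d^C_0f_0=fd^P_0$ on a basis, and therefore everywhere. So $f_0$ lifts $f$.

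\emph{Inductive step $n\ge1$.} Assume $d^C_{n-1}f_{n-1}=f_{n-2}d^P_{n-1}$. Put $y=f_{n-1}d^P_n(e_{n,i})\in C_{n-1}$. The weak self-homotopy identity gives $d^C_n t_{n-1}(y)=y-t_{n-2}d^C_{n-1}(y)$. By the induction hypothesis, together with $d^Pd^P=0$ (for $n=1$, $d^P_0d^P_1=0$),
\[
d^C_{n-1}(y)=f_{n-2}d^P_{n-1}d^P_n(e_{n,i})=0 .
\]
Therefore $d^C_nf_n(e_{n,i})=y=f_{n-1}d^P_n(e_{n,i})$. Since both sides are module maps agreeing on a basis, $d^C_nf_n=f_{n-1}d^P_n$.

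The only subtlety is the one already noted: $t$ is not linear over the module ring. It is handled by checking identities only on basis elements, where the formula is literally $t$ applied to an element, and then invoking freeness. We do not expect any other obstacle. The argument works for $f$ a homomorphism over the same ring as $P_*$ and $C_*$, here $A^e$.
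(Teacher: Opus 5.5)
Your proof is correct. It is the standard argument for this result: verify $d^C_nf_n=f_{n-1}d^P_n$ on basis elements using $d_nt_{n-1}=\id-t_{n-2}d_{n-1}$ together with $d^Pd^P=0$ and the induction hypothesis, then extend by freeness, with the base case given by $d_0t_{-1}=\id_N$. The paper does not prove this statement itself; it only cites \cite[Proposition~2.2]{IIVZ}, so your argument supplies the standard proof behind that citation.
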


\begin{lemma}[{\cite[Lemma~2.4]{IIVZ}}]\label{lem:phi}
Let \(A\) be local with radical \(J=\rad A\), and suppose that the differential
of the resolution \(P_*\) has the following form on basis elements:
\[
        d_n^{P}(e_{n,i})
        =\sum_{p\in T_{n,i}} a_p\,e_{n-1,j_p}\,b_p
        +\sum_{q\in T'_{n,i}} e_{n-1,j_q}\,b'_q,
        \qquad a_p\in J,\ b_p,b'_q\in A.
\]
If \(\Phi_*:P_*\to\Bbar_*(A)\) is the chain map constructed by
Proposition~\ref{prop:lift} with \(f=\id_A\) using the self-homotopy
\(s_*\), then
\[
        \Phi_n(e_{n,i})
        =\sum_{p\in T_{n,i}}
        s_{n-1}\bigl(a_p\,\Phi_{n-1}(e_{n-1,j_p})\,b_p\bigr).
\]
In particular, terms of the differential with purely right coefficients
do not contribute.
\end{lemma}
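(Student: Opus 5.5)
The lemma is a direct unwinding of the recursive formula of Proposition~\ref{prop:lift} with $f=\id_A$ and $t_*=s_*$:
\[
\Phi_n(e_{n,i})=s_{n-1}\bigl(\Phi_{n-1}(d_n^P(e_{n,i}))\bigr),
\]
followed by the claim that the summands with purely right coefficients vanish. The proof goes in three steps.

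\textbf{Step 1: expand.} Substitute the given form of $d_n^P(e_{n,i})$. Since $\Phi_{n-1}$ is an $A^e$-module map, it commutes with both left and right coefficients, so
\[
\Phi_{n-1}\bigl(d_n^P(e_{n,i})\bigr)=\sum_{p\in T_{n,i}} a_p\,\Phi_{n-1}(e_{n-1,j_p})\,b_p+\sum_{q\in T'_{n,i}}\Phi_{n-1}(e_{n-1,j_q})\,b'_q .
\]
Because $s_{n-1}$ is $K$-linear and right $A$-linear, applying it gives
\[
\sum_{p} s_{n-1}\bigl(a_p\Phi_{n-1}(e_{n-1,j_p})b_p\bigr)+\sum_{q} s_{n-1}\bigl(\Phi_{n-1}(e_{n-1,j_q})\bigr)b'_q .
\]
So it suffices to prove $s_{n-1}\Phi_{n-1}(e_{n-1,j})=0$ for every basis element $e_{n-1,j}$.

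\textbf{Step 2: the case $n\ge2$.} Here Proposition~\ref{prop:lift} itself gives $\Phi_{n-1}(e_{n-1,j})=s_{n-2}(\Phi_{n-2}d^P_{n-1}e_{n-1,j})$, which lies in the image of $s_{n-2}$. The identity $s_{n-1}s_{n-2}=0$, recorded just after the definition of $s_*$, then kills it.

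\textbf{Step 3: the case $n=1$.} Now $\Phi_0(e_{0,j})=s_{-1}\bigl(d_0^P(e_{0,j})\bigr)=1\otimes d_0^P(e_{0,j})$. Applying $s_0$ to $1\otimes r$ produces $1\otimes\overline 1\otimes r=0$, because the leftmost tensor factor is $1\in K\cdot1$. This is exactly the relation $s_0s_{-1}=0$.

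No localness is actually needed for the vanishing. The hypotheses $A$ local and $a_p\in J$ only serve to make the displayed decomposition of $d^P$ canonical: every term is either a radical-coefficient term or a term whose left coefficient is a scalar, and a scalar can be moved to the right. That is presumably why they appear in the statement.

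The only point needing care is right $A$-linearity of $s_*$ versus its behaviour on left coefficients. We must not pull $a_p$ out of $s_{n-1}$; it stays inside, as in the formula. Pulling $b'_q$ and $b_p$ out on the right is legitimate, so the argument is unaffected.
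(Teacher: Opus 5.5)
Your proof is correct. Every $\Phi_{n-1}(e_{n-1,j})$ lies in the image of $s_{n-2}$ (of $s_{-1}$ when $n=1$), so right $A$-linearity of $s_{n-1}$ together with $s_{n-1}s_{n-2}=0$ kills the terms with purely right coefficients, while the terms $a_p\,\Phi_{n-1}(e_{n-1,j_p})\,b_p$ remain as stated. The paper gives no proof of its own and simply imports the statement from \cite[Lemma~2.4]{IIVZ}; your argument is the standard one for this, and your remark that locality and $a_p\in J$ play no role in the vanishing is also accurate.
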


\begin{lemma}[{\cite[Lemma~2.5]{IIVZ}}]\label{lem:psi}
Let \(t_*\) be a weak self-homotopy of the resolution \(P_*\) consisting of
right \(A\)-linear maps and satisfying \(t_{n+1}t_n=0\). If
\(\Psi_*:\Bbar_*(A)\to P_*\) is the chain map constructed by
Proposition~\ref{prop:lift} with \(f=\id_A\) using \(t_*\), then for
\(n\geq 1\) and \(a_1,\dots,a_n\in A\)
\[
        \Psi_n(1\otimes\overline a_1\otimes\cdots\otimes\overline a_n
        \otimes 1)
        =t_{n-1}\bigl(a_1\,
        \Psi_{n-1}(1\otimes\overline a_2\otimes\cdots\otimes
        \overline a_n\otimes 1)\bigr).
\]
\end{lemma}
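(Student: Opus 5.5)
Let $t_*$ be as in the statement: a right $A$-linear weak self-homotopy of $P_*$ with $t_{n+1}t_n=0$. I will prove the formula by unwinding the recursive definition in Proposition~\ref{prop:lift}, taking $f=\id_A$ and $C_*=P_*$, and then using that the bar differential $b_n$ splits as (left multiplication by $a_1$) plus terms in the image of $s_{n-1}$.

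By Proposition~\ref{prop:lift}, $\Psi_n(1\otimes\overline a_1\otimes\cdots\otimes\overline a_n\otimes1)=t_{n-1}\Psi_{n-1}b_n(1\otimes\overline a_1\otimes\cdots\otimes\overline a_n\otimes 1)$. Write
\[
b_n(1\otimes\overline a_1\otimes\cdots\otimes\overline a_n\otimes1)=a_1\cdot(1\otimes\overline a_2\otimes\cdots\otimes\overline a_n\otimes1)+R,
\]
where $R$ collects the remaining terms. Each remaining term has the form $1\otimes\overline c_1\otimes\cdots\otimes\overline c_{n-1}\otimes c$, that is, $s_{n-2}(c_1\otimes\overline c_2\otimes\cdots\otimes\overline c_{n-1}\otimes 1)\cdot c$. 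Here $s_{n-2}$ is the normalized bar homotopy and is right $A$-linear.

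Since $\Psi_{n-1}$ is $A^e$-linear, the first term contributes $t_{n-1}\bigl(a_1\Psi_{n-1}(1\otimes\overline a_2\otimes\cdots\otimes\overline a_n\otimes 1)\bigr)$. This is exactly the claimed expression.

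It remains to show that $t_{n-1}\Psi_{n-1}(R)=0$. By right linearity, it suffices to show $t_{n-1}\Psi_{n-1}s_{n-2}(y)=0$ for every $y$ of the form $y=c_1\otimes\overline c_2\otimes\cdots\otimes 1$. Before proving this, I establish a key auxiliary claim by induction on $m\ge0$: $\Psi_m s_{m-1}=t_{m-1}\Psi_{m-1}$ on elements of the form $c_0\otimes\overline c_1\otimes\cdots\otimes\overline c_{m-1}\otimes 1$. For $m=0$, read $\Psi_{-1}=\id_A$ and $s_{-1}(c)=1\otimes c$.

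Granting this claim, the vanishing follows: $t_{n-1}\Psi_{n-1}s_{n-2}(y)=t_{n-1}t_{n-2}\Psi_{n-2}(y)=0$, because $t_{n-1}t_{n-2}=0$.

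The claim itself comes from the defining recursion. The element $s_{m-1}(c_0\otimes\cdots)=1\otimes\overline c_0\otimes\cdots\otimes\overline c_{m-1}\otimes1$ is a free generator. So $\Psi_m$ of it equals $t_{m-1}\Psi_{m-1}b_m s_{m-1}(\ldots)$. Using the weak homotopy identity $b_m s_{m-1}=\id-s_{m-2}b_{m-1}$, we get
\[
\Psi_m s_{m-1}=t_{m-1}\Psi_{m-1}-t_{m-1}\Psi_{m-1}s_{m-2}b_{m-1}.
\]
By the induction hypothesis, the second term equals $t_{m-1}t_{m-2}\Psi_{m-2}b_{m-1}$, which is zero since $t_{m-1}t_{m-2}=0$. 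The base cases are $m=0$, using $\Psi_0(1\otimes1)=t_{-1}(1)$ and $\Psi_0(1\otimes c)=t_{-1}(1)c=t_{-1}(c)$, and $m=1$, which I check the same way.

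The main obstacle is bookkeeping in this induction. Two points need care. First, the identity must hold on the non-generator elements $c_0\otimes\cdots\otimes 1$ with $c_0\notin K$; this is handled by the $A^e$-linearity of $\Psi$ together with the right linearity of $s_*$ and $t_*$. Second, when $c_0\in K$ the element $s_{m-1}(\ldots)$ is zero. In that case one must check that the right-hand side $t_{m-1}\Psi_{m-1}(1\otimes\overline c_1\otimes\cdots)$ also vanishes. It does: the generator $1\otimes\overline c_1\otimes\cdots$ is itself in the image of $s_{m-2}$, so by induction $\Psi_{m-1}$ of it lies in the image of $t_{m-2}$, and $t_{m-1}t_{m-2}=0$ kills it.
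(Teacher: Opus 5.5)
Your proof is correct. The paper gives no argument of its own for this lemma; it is quoted from \cite[Lemma~2.5]{IIVZ}. The natural comparison is therefore with the most direct proof. That proof starts exactly as yours does, but disposes of the remainder $R$ without any auxiliary induction. Each term of $R$ has the form $g\cdot c$, where $g=1\otimes\overline c_1\otimes\cdots\otimes\overline c_{n-1}\otimes1$ is a generator of $\Bbar_{n-1}(A)$. By the defining recursion of Proposition~\ref{prop:lift}, extended $K$-linearly, $\Psi_{n-1}(g)=t_{n-2}\Psi_{n-2}b_{n-1}(g)$, or $\Psi_0(1\otimes1)=t_{-1}(1)$ when $n=1$. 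Hence $t_{n-1}\Psi_{n-1}(gc)=t_{n-1}t_{n-2}(\cdots)\,c=0$ by right linearity. You actually use this same observation in your last paragraph.

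Your identity $\Psi_ms_{m-1}=t_{m-1}\Psi_{m-1}$ is stronger, and you prove it correctly. Once you have it, however, the decomposition of $b_n$ is superfluous. Since $1\otimes\overline a_1\otimes\cdots\otimes\overline a_n\otimes1=s_{n-1}\bigl(a_1\cdot(1\otimes\overline a_2\otimes\cdots\otimes\overline a_n\otimes1)\bigr)$, the case $m=n$ together with the $A^e$-linearity of $\Psi_{n-1}$ gives the lemma in one line.

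What your formulation buys is uniformity: it holds on all of $\Bbar_{m-1}(A)$. Its derivation from $b_ms_{m-1}+s_{m-2}b_{m-1}=\id$ remains valid when $s_{m-1}$ kills the element, since both sides of $\Psi_m(s_{m-1}x)=t_{m-1}\Psi_{m-1}b_m(s_{m-1}x)$ are then $0$. So the degenerate case $c_0\in K\cdot1$, which you treat separately, is already covered by your general inductive step.
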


\section{Weak self-homotopy}\label{sec:homotopy}

Set
\[
        t_{-1}(r)=e_{000}r,
        \qquad r\in R.
\]
For \(n\geq 0\), the map \(t_n:Q_n\to Q_{n+1}\) is defined by right \(R\)-linearity. It therefore suffices to give the values \(t_n(\beta e_{ijk})\), where \(\beta\in B\) and \(i+j+k=n\). Below, \(\mathbf{1}_C\) denotes the indicator of the condition \(C\).

\[
\begin{aligned}
&t_n(e_{ijk})=0,\\
&t_n(x_0e_{ijk})=
\begin{cases}
\rho^{-j}e_{i+1,j,k},& k>0\ \text{or}\ j=0,\\
0,& k=0,\ j>0,
\end{cases}\\
&t_n(x_1e_{ijk})=
\begin{cases}
e_{i,j+1,0},& k=0,\\
0,& k>0,
\end{cases}\\
&t_n(x_2e_{ijk})=
\begin{cases}
e_{0,j,k+1},& i=0,\\
0,& i>0,
\end{cases}\\
&t_n(ae_{ijk})=
\begin{cases}
x_0e_{i,j+1,0}+\mathbf{1}_{j=0}\,\rho^i e_{i+1,0,0}x_1,& k=0,\\
0,& k>0,
\end{cases}\\
&t_n(be_{ijk})=\rho^{-i}x_1e_{i,j,k+1}+\mathbf{1}_{k=0}\,\rho^{j-i}e_{i,j+1,0}x_2,\\
&t_n(ce_{ijk})=\rho^{-j}x_2e_{i+1,j,k}+\mathbf{1}_{j>0}\,\rho^{k-i-j}x_1e_{i+1,j-1,k+1}+\mathbf{1}_{i=0}\,\rho^{k-j}e_{0,j,k+1}x_0+\mathbf{1}_{j>0,\ k=0}\,\rho^{-i-1}e_{i+1,j,0}x_2,\\
&t_n(we_{ijk})=\rho^{-i}ae_{i,j,k+1}+\mathbf{1}_{k=0}\,\rho^{j-i}x_0e_{i,j+1,0}x_2+\mathbf{1}_{j=0,\ k=0}\,e_{i+1,0,0}b.
\end{aligned}
\]

\begin{proposition}\label{prop:homotopy}
The maps $t_n$ defined above form a weak self-homotopy of the complex
\[
        Q_*\xrightarrow{\ \varepsilon\ }R\longrightarrow0 .
\]
Namely,
\[
        \varepsilon\,t_{-1}=\id_R,
\]
and, for all $n\ge0$,
\[
        d_{n+1}t_n+t_{n-1}d_n=\id_{Q_n}
        \qquad(d_0=\varepsilon).
\]
Moreover, the following normalization holds:
\[
        t_{n+1}t_n=0,
        \qquad n\ge-1 .
\]
\end{proposition}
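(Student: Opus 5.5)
The plan is a direct verification on the right $R$-module basis $\{\beta e_{ijk}\mid \beta\in B,\ i+j+k=n\}$ of $Q_n$. All maps involved, namely $d_n$ (which is $\Lambda$-linear), $t_n$, and the compositions, are right $R$-linear. So it suffices to check both identities on the elements $\beta e_{ijk}$. The equality $\varepsilon t_{-1}=\id_R$ is immediate, since $\varepsilon(e_{000}r)=r$.

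For $n=0$ we check $d_1t_0(\beta e_{000})+e_{000}\varepsilon(\beta e_{000})=\beta e_{000}$ for each of the eight $\beta$. For example, $d_1t_0(x_0e_{000})=d'(e_{100})=x_0e_{000}+e_{000}x_0$. Adding $e_{000}x_0$ then gives $x_0e_{000}$ in characteristic $2$.

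For $n\ge1$ we first compute $d_n(\beta e_{ijk})=\beta\,d_n(e_{ijk})$. Here $\beta$ multiplies the left coefficients $x_0,x_1,x_2$, and the product is rewritten in the basis $B$ using $x_{i+1}x_i=\rho x_ix_{i+1}$ and $x_i^2=0$. For instance $x_1x_0=\rho a$, $x_2x_1=\rho b$, $x_0x_2=\rho^{-1}c$, $x_1c=\rho w$, and $x_2a=\rho^{2}w$, with all products of length $4$ vanishing. We then apply $t_{n-1}$ to each resulting term $\gamma e_{i'j'k'}\,r$, using right linearity, and separately compute $d_{n+1}t_n(\beta e_{ijk})$ from the table.

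We organize the work by $\beta$ in order of increasing length: $1$, then $x_0,x_1,x_2$, then $a,b,c$, then $w$. Within each $\beta$ we split into the boundary cases dictated by the indicator conditions in the definition of $t_n$ ($i=0$ or $i>0$, $j=0$ or $j>0$, $k=0$ or $k>0$). In each case the identity reduces to a cancellation of coefficients that are powers of $\rho$, and $\Char K=2$ removes sign issues. The case $\beta=1$ is easy: $t_n(e_{ijk})=0$, and $t_{n-1}d_n(e_{ijk})$ returns $e_{ijk}$ through the unique left-$x$ term that $t_{n-1}$ does not kill, while the right-coefficient terms are annihilated because $t(e)=0$. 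The length-one cases follow the same pattern.

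The main obstacle is $\beta\in\{c,w\}$. There the formula for $t_n$ has up to four terms, and $d_{n+1}$ of each produces many mixed terms of the form $x\,e\,x'$. These must cancel pairwise against the terms of $t_{n-1}d_n(\beta e_{ijk})$. The exponents $\rho^{k-i-j}$ and $\rho^{-i-1}$ have to match exactly, and the cancellation depends on the ordering conventions $x_0x_2=\rho^{-1}c$. I would handle this by writing every term in the normal form $\gamma\,e_{i'j'k'}\,\delta$ with $\gamma,\delta\in B$, then grouping terms by $(i',j',k')$ and by the pair $(\gamma,\delta)$. It is cleanest to do $c$ first, since the $w$ case reuses the computations for $a$ and $b$ via $w=ax_2=x_0b$. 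I would also sanity-check low degrees, for example $n=1,2$ with the cases $j=0$, $k=0$, against the general formulas.

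For the normalization $t_{n+1}t_n=0$, note that every term in the image of $t_n$ has the form $\gamma e_{i'j'k'}r$, where $\gamma\in\{1,x_0,x_1,x_2,a\}$, and the index conditions are explicit. Applying $t_{n+1}$ by right linearity, terms with $\gamma=1$ vanish. For the others we check that the indicator conditions fail. For example, $t(x_0e_{i,j+1,0})=0$ because $k=0$ and $j+1>0$. Similarly $t(x_1e_{i,j,k+1})=0$ since $k+1>0$, $t(x_2e_{i+1,j,k})=0$ since $i+1>0$, and $t(ae_{i,j,k+1})=0$ since $k+1>0$. The case $n=-1$ holds because $t_0(e_{000})=0$.
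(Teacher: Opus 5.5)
Your strategy is the same as the paper's. The paper's proof says only that all maps are right $R$-linear and that the identities are checked directly on the right $R$-basis $\{\beta e_{ijk}\}$. The parts you actually carry out are correct:
\begin{itemize}
\item $\varepsilon t_{-1}=\id_R$ and the degree-$0$ example;
\item the case $\beta=1$, where exactly one of the three left-coefficient terms of $d_n(e_{ijk})$ survives $t_{n-1}$;
\item the normalization $t_{n+1}t_n=0$: every term of $t_n(\beta e_{ijk})$ has left coefficient in $\{1,x_0,x_1,x_2,a\}$, with an index pattern on which $t_{n+1}$ vanishes.
\end{itemize}

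The problem is the multiplication table. As you note yourself, all the remaining cancellations depend on it, and three of the five products you list are wrong.
\begin{itemize}
\item For $i=2$ the relation $x_{i+1}x_i=\rho x_ix_{i+1}$ reads $x_0x_2=\rho x_2x_0$. So $x_0x_2=\rho c$, not $\rho^{-1}c$.
\item $x_1c=x_1x_2x_0=bx_0=w$, not $\rho w$.
\item $x_2a=x_2x_0x_1=cx_1=w$, not $\rho^2 w$.
\end{itemize}
In general, all cyclic rotations of $x_0x_1x_2$ equal $w$; this is what makes $\tau(uv)$ symmetric. The anticyclic words $x_1x_0x_2$, $x_0x_2x_1$, $x_2x_1x_0$ equal $\rho w$.

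With your value of $x_0x_2$ the identity already fails in degree $1$. Take $x_0e_{001}$:
\begin{itemize}
\item $d_2t_1(x_0e_{001})=d_2(e_{101})=x_0e_{001}+\rho e_{001}x_0+\rho x_2e_{100}+e_{100}x_2$;
\item $t_0d_1(x_0e_{001})=t_0(x_0x_2e_{000})+e_{100}x_2$.
\end{itemize}
Write $x_0x_2=\lambda c$ and use $t_0(ce_{000})=x_2e_{100}+e_{001}x_0$. The sum of the two bullets is then $x_0e_{001}+(\rho+\lambda)(x_2e_{100}+e_{001}x_0)$. This equals $x_0e_{001}$ only for $\lambda=\rho$; with your $\lambda=\rho^{-1}$ it fails whenever $\rho\ne1$.

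The same constant is needed again in the $c$ case. There, $d_{n+1}$ of the last term $\rho^{-i-1}e_{i+1,j,0}x_2$ of $t_n(ce_{ij0})$ produces $\rho^{-i-1}e_{ij0}x_0x_2=\rho^{-i}e_{ij0}c$. This must cancel against $\rho^{-i}e_{ij0}c$ coming from $t_{n-1}(ce_{i-1,j,0})x_0$, or from the term $e_{0j1}x_0$ of $t_n(ce_{0j0})$ when $i=0$. It also reappears in the $w$ case through the term $x_0e_{i,j+1,0}x_2$.

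So the case-by-case verification you outline goes through only after the multiplication table is corrected.
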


\begin{proof}
Both sides are right $R$-linear, and the elements $\beta e_{ijk}$,
$\beta\in B$, $i+j+k=n$, form a basis of $Q_n$ as a right
$R$-module; the relations are verified directly on these elements.
\end{proof}

\section{Construction of comparison morphisms}\label{sec:comparison}

\subsection{The morphism \(\Phi_*:Q_*\to\Bbar_*(R)\)}

Throughout the rest of the paper, we set \(A=R\), \(P_*=Q_*\), \(f=\id_R\), and use
the weak self-homotopy \(t_*\) constructed in the preceding section and
the standard self-homotopy \(s_*\) of the normalized bar resolution. We
take \(B_A\) to be \(B\setminus\{1\}\).

For a word \(\varepsilon=(\varepsilon_1,\ldots,\varepsilon_n)\in\{0,1,2\}^n\),
set
\begin{equation}\label{eq:T-word}
        T(\varepsilon)
        =1\otimes\overline x_{\varepsilon_1}\otimes\cdots\otimes
        \overline x_{\varepsilon_n}\otimes 1
        \in\Bbar_n(R);
\end{equation}
the empty word corresponds to \(T(\;)=1\otimes 1\).

The algebra \(R\) is local: the ideal \(J=\rad R=\langle x_0,x_1,x_2\rangle\) has
\(K\)-basis \(B\setminus\{1\}\), is nilpotent (\(J^4=0\)), and
\(R/J\cong K\). The differential \(d_n(e_{ijk})\) has the form required
by Lemma~\ref{lem:phi}: the terms
\[
        \rho^{j}x_0e_{i-1,j,k},
        \qquad
        \rho^{k}x_1e_{i,j-1,k},
        \qquad
        \rho^{i}x_2e_{i,j,k-1}
\]
have left coefficients in \(J\), while the remaining terms have purely right coefficients
and, by Lemma~\ref{lem:phi}, do not contribute. Hence
\(\Phi_0(e_{000})=1\otimes 1\), and for \(n\geq 1\)
\begin{equation}\label{eq:Phi-recursion}
        \Phi_n(e_{ijk})
        =\mathbf 1_{i>0}\,\rho^{j}\,
        s_{n-1}\bigl(x_0\,\Phi_{n-1}(e_{i-1,j,k})\bigr)
        +\mathbf 1_{j>0}\,\rho^{k}\,
        s_{n-1}\bigl(x_1\,\Phi_{n-1}(e_{i,j-1,k})\bigr)
        +\mathbf 1_{k>0}\,\rho^{i}\,
        s_{n-1}\bigl(x_2\,\Phi_{n-1}(e_{i,j,k-1})\bigr).
\end{equation}
On the generators \(e_{ijk}\), \(i+j+k=n\), the morphism \(\Phi_n\) is defined
by~\eqref{eq:Phi-recursion} and extended to the whole module \(Q_n\) by
\(\Lambda\)-linearity.

\begin{lemma}\label{lem:Phi-support}
For all \(i,j,k\geq 0\), \(i+j+k=n\), the element \(\Phi_n(e_{ijk})\) lies in the
\(K\)-linear span of the tensors \(T(\varepsilon)\),
\(\varepsilon\in\{0,1,2\}^n\). Moreover, for every
\(\varepsilon_0\in\{0,1,2\}\) and every word
\(\varepsilon'\in\{0,1,2\}^{n-1}\)
\begin{equation}\label{eq:s-prepend}
        s_{n-1}\bigl(x_{\varepsilon_0}\,T(\varepsilon')\bigr)
        =T\bigl((\varepsilon_0,\varepsilon')\bigr).
\end{equation}
\end{lemma}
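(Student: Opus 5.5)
First establish the prepending identity \eqref{eq:s-prepend}, then run induction on \(n\) using the recursion \eqref{eq:Phi-recursion}.

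For \eqref{eq:s-prepend}, take \(T(\varepsilon')=1\otimes\overline x_{\varepsilon_2}\otimes\cdots\otimes\overline x_{\varepsilon_n}\otimes1\) (writing \(\varepsilon'=(\varepsilon_2,\dots,\varepsilon_n)\)). Multiplying on the left by \(x_{\varepsilon_0}\) acts only on the leftmost tensor factor, giving
\[
x_{\varepsilon_0}\otimes\overline x_{\varepsilon_2}\otimes\cdots\otimes\overline x_{\varepsilon_n}\otimes1 .
\]
This element has exactly the shape \(a_0\otimes\overline a_1\otimes\cdots\otimes\overline a_{n-1}\otimes1\) on which \(s_{n-1}\) is defined, with \(a_0=x_{\varepsilon_0}\) and rightmost factor \(1\). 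So no right \(R\)-linear extension is needed, and the definition of \(s_{n-1}\) gives \(1\otimes\overline x_{\varepsilon_0}\otimes\overline x_{\varepsilon_2}\otimes\cdots\otimes1=T((\varepsilon_0,\varepsilon'))\). The case \(n=1\) is included: the empty word gives \(T(\;)=1\otimes1\), and \(s_0(x_{\varepsilon_0}\otimes1)=1\otimes\overline x_{\varepsilon_0}\otimes1\).

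For the support statement, argue by induction on \(n\). The base case is \(\Phi_0(e_{000})=1\otimes1=T(\;)\). For the inductive step with \(n\ge1\), assume \(\Phi_{n-1}(e_{i'j'k'})\) is a \(K\)-combination of tensors \(T(\varepsilon')\) with \(\varepsilon'\in\{0,1,2\}^{n-1}\). Each summand in \eqref{eq:Phi-recursion} then has the form \(\rho^{m}s_{n-1}(x_\ell\,\Phi_{n-1}(\cdot))\).

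Left multiplication by \(x_\ell\) and the map \(s_{n-1}\) are both \(K\)-linear. Hence each such summand equals a \(K\)-combination of terms \(s_{n-1}(x_\ell T(\varepsilon'))=T((\ell,\varepsilon'))\), by the identity just proved. These all lie in the span of the \(T(\varepsilon)\), \(\varepsilon\in\{0,1,2\}^n\), which completes the induction.

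The only point needing care is that \(s_{n-1}\) is merely right \(R\)-linear, not left linear. So we must not pull \(x_\ell\) through \(s_{n-1}\). Instead we apply \(s_{n-1}\) directly to \(x_\ell T(\varepsilon')\), whose rightmost factor is \(1\), and this is exactly the situation the definition covers. There is no genuine obstacle beyond this bookkeeping.
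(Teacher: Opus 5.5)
Your proposal is correct and follows the paper's proof essentially verbatim: the paper likewise verifies \eqref{eq:s-prepend} directly from the definition of \(s_{n-1}\), using that the outer left coefficient of \(T(\varepsilon')\) is \(1\). It then proves the span assertion by induction on \(n\) via \eqref{eq:Phi-recursion}. Your remark that \(s_{n-1}\) must be applied directly to \(x_\ell T(\varepsilon')\), rather than pulling \(x_\ell\) through it, is exactly the point the paper's argument relies on implicitly.
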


\begin{proof}
We first verify~\eqref{eq:s-prepend}. The outer left coefficient of the tensor
\(T(\varepsilon')\) is \(1\), so
\[
        x_{\varepsilon_0}\,T(\varepsilon')
        =x_{\varepsilon_0}\otimes\overline x_{\varepsilon'_1}
        \otimes\cdots\otimes\overline x_{\varepsilon'_{n-1}}\otimes 1,
\]
and the definition of the self-homotopy \(s_*\) gives
\[
        s_{n-1}\bigl(x_{\varepsilon_0}\,T(\varepsilon')\bigr)
        =1\otimes\overline x_{\varepsilon_0}\otimes
        \overline x_{\varepsilon'_1}\otimes\cdots\otimes
        \overline x_{\varepsilon'_{n-1}}\otimes 1
        =T\bigl((\varepsilon_0,\varepsilon')\bigr).
\]
The right-hand side is nonzero since \(x_{\varepsilon_0}\notin K\cdot 1\).

The first assertion follows by induction on \(n\). For \(n=0\), we have
\(\Phi_0(e_{000})=1\otimes 1=T(\;)\). Let \(n\geq 1\), and suppose that the assertion
holds for \(n-1\). Then each of the values \(\Phi_{n-1}(e_{i-1,j,k})\),
\(\Phi_{n-1}(e_{i,j-1,k})\), \(\Phi_{n-1}(e_{i,j,k-1})\) is a \(K\)-linear
combination of tensors \(T(\varepsilon')\), \(\varepsilon'\in\{0,1,2\}^{n-1}\).
Substituting into~\eqref{eq:Phi-recursion} and applying~\eqref{eq:s-prepend} to
each term gives a \(K\)-linear combination of tensors
\(T(\varepsilon)\), \(\varepsilon\in\{0,1,2\}^n\).
\end{proof}

\begin{proposition}\label{prop:words-paths}
Let $W(i,j,k)$ be the set of words
$\varepsilon\in\{0,1,2\}^n$ containing exactly
$i$ occurrences of $0$, $j$ occurrences of $1$, and $k$ occurrences of $2$,
where $i,j,k\ge0$ and $n=i+j+k$.
Then the correspondence
\begin{equation}\label{eq:word-to-path}
\varepsilon\longmapsto(v_0,\ldots,v_n),
\qquad
v_m=(i,j,k)-\sum_{\ell=1}^{m}u_{\varepsilon_\ell},
\quad 0\le m\le n,
\end{equation}
where $u_0,u_1,u_2$ are the standard basis vectors of
$\mathbb Z^3$, defines a bijection between $W(i,j,k)$
and the set of lattice paths in $\mathbb Z_{\ge0}^3$
from $(i,j,k)$ to $(0,0,0)$ with steps $-u_0,-u_1,-u_2$.
\end{proposition}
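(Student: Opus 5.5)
The proof is elementary combinatorics, and I would write down the map and its inverse explicitly and check that they are mutually inverse.

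\emph{Step 1: the image of \eqref{eq:word-to-path} is a lattice path.} Let \(\varepsilon\in W(i,j,k)\). By construction \(v_0=(i,j,k)\), and \(v_m-v_{m-1}=-u_{\varepsilon_m}\in\{-u_0,-u_1,-u_2\}\), so each step has the allowed form. Since \(\varepsilon\) contains \(i\) zeros, \(j\) ones and \(k\) twos, we get \(\sum_{\ell=1}^n u_{\varepsilon_\ell}=(i,j,k)\), hence \(v_n=(0,0,0)\). For nonnegativity: the coordinate \(r\) of \(v_m\) equals the number of occurrences of \(r\) in \(\varepsilon\) minus the number of occurrences of \(r\) among \(\varepsilon_1,\dots,\varepsilon_m\). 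This is the number of occurrences of \(r\) among \(\varepsilon_{m+1},\dots,\varepsilon_n\), so it is \(\ge0\). Thus \(v_m\in\mathbb Z_{\ge0}^3\).

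\emph{Step 2: the inverse map.} Given a lattice path \((v_0,\dots,v_n)\) from \((i,j,k)\) to \((0,0,0)\) with allowed steps, each difference \(v_{m-1}-v_m\) equals a unique standard basis vector \(u_{\varepsilon_m}\). Uniqueness holds because \(u_0,u_1,u_2\) are distinct. This defines a word \(\varepsilon\). Note that every such path has exactly \(n=i+j+k\) steps, because each step lowers the coordinate sum by \(1\). Summing the steps gives \(\sum_m u_{\varepsilon_m}=v_0-v_n=(i,j,k)\), so \(\varepsilon\) contains exactly \(i\) zeros, \(j\) ones and \(k\) twos, i.e.\ \(\varepsilon\in W(i,j,k)\).

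\emph{Step 3: the two maps are mutually inverse.} Starting from a path, telescoping gives \(v_m=v_0-\sum_{\ell\le m}u_{\varepsilon_\ell}\), so the recovered word reproduces the same path. Starting from a word, the path has steps \(-u_{\varepsilon_m}\), so reading off the steps returns \(\varepsilon\).

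No step is a real obstacle. The only points needing care are the path length (every path must have exactly \(n\) steps, forced by the coordinate sum) and nonnegativity, which Step 1 obtains from the suffix-count interpretation of the coordinates.
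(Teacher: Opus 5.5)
Your proof is correct and follows the paper's argument: the paper likewise rewrites $v_m$ as the suffix sum $\sum_{\ell=m+1}^{n}u_{\varepsilon_\ell}$ to get nonnegativity and $v_n=(0,0,0)$, and recovers the word from $u_{\varepsilon_m}=v_{m-1}-v_m$ using $\sum_m u_{\varepsilon_m}=v_0-v_n=(i,j,k)$. Your remarks that every such path has exactly $n$ steps and that the two maps are mutually inverse spell out details the paper leaves implicit.
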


\begin{proof}
For $\varepsilon\in W(i,j,k)$,
formula~\eqref{eq:word-to-path} gives
\[
v_m=\sum_{\ell=m+1}^{n}u_{\varepsilon_\ell},
\qquad 0\le m\le n.
\]
Hence $v_m\in\mathbb Z_{\ge0}^3$,
$v_n=(0,0,0)$, and
$v_{m-1}-v_m=u_{\varepsilon_m}$ for $1\le m\le n$.

The inverse correspondence is given by the equalities
$u_{\varepsilon_m}=v_{m-1}-v_m$, $1\le m\le n$.
Since
$\sum_{m=1}^{n}u_{\varepsilon_m}=v_0-v_n=(i,j,k)$,
the recovered word belongs to $W(i,j,k)$.
\end{proof}

Define the weight of a word by
\begin{equation}\label{eq:weight}
        w(\varepsilon)=\prod_{m=1}^{n}\kappa_{\varepsilon_m}(v_{m-1}),
        \qquad
        \kappa_0(\alpha,\beta,\gamma)=\rho^{\beta},
        \quad
        \kappa_1(\alpha,\beta,\gamma)=\rho^{\gamma},
        \quad
        \kappa_2(\alpha,\beta,\gamma)=\rho^{\alpha};
\end{equation}
the factor \(\kappa_{\varepsilon_m}(v_{m-1})\) is the coefficient with which
the term corresponding to the \(m\)-th step enters
the recursion~\eqref{eq:Phi-recursion} at the vertex \(v_{m-1}\).

\begin{proposition}\label{prop:phi-paths}
For all \(i,j,k\geq 0\), \(i+j+k=n\),
\[
        \Phi_n(e_{ijk})
        =\sum_{\varepsilon\in W(i,j,k)}
        w(\varepsilon)\,T(\varepsilon).
\]
\end{proposition}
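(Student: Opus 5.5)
We argue by induction on \(n=i+j+k\), using the recursion~\eqref{eq:Phi-recursion} together with the prepending rule~\eqref{eq:s-prepend} of Lemma~\ref{lem:Phi-support}. For \(n=0\), the set \(W(0,0,0)\) contains only the empty word, whose weight is the empty product \(1\). Hence the right-hand side equals \(T(\;)=1\otimes1=\Phi_0(e_{000})\).

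For the inductive step, let \(n\ge1\) and assume the formula holds for all triples with sum \(n-1\). We partition \(W(i,j,k)\) according to the first letter \(\varepsilon_1\). Words with \(\varepsilon_1=0\) exist only when \(i>0\), and they are exactly the words \((0,\varepsilon')\) with \(\varepsilon'\in W(i-1,j,k)\). The same holds for the letters \(1\) and \(2\), with \(W(i,j-1,k)\) under \(j>0\) and \(W(i,j,k-1)\) under \(k>0\). This gives a disjoint decomposition
\[
W(i,j,k)=\bigsqcup_{\ell\in\{0,1,2\},\ (i,j,k)-u_\ell\ge0}\{(\ell,\varepsilon')\mid \varepsilon'\in W((i,j,k)-u_\ell)\},
\]
and the indicator conditions in this decomposition match exactly those in~\eqref{eq:Phi-recursion}.

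The key step is a weight factorization. Let \(\varepsilon=(\ell,\varepsilon')\). By Proposition~\ref{prop:words-paths}, the path of \(\varepsilon\) starts at \(v_0=(i,j,k)\) and has \(v_1=(i,j,k)-u_\ell\). Its remaining vertices \(v_1,\dots,v_n\) coincide with the path \(v'_0,\dots,v'_{n-1}\) that the same proposition assigns to \(\varepsilon'\in W(v_1)\). This is visible from \(v_m=\sum_{r>m}u_{\varepsilon_r}\). Therefore
\[
w(\varepsilon)=\kappa_\ell(i,j,k)\,w(\varepsilon').
\]
Here \(\kappa_0(i,j,k)=\rho^j\), \(\kappa_1(i,j,k)=\rho^k\) and \(\kappa_2(i,j,k)=\rho^i\), which are precisely the coefficients appearing in~\eqref{eq:Phi-recursion}.

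It remains to substitute. By the induction hypothesis, \(\Phi_{n-1}\bigl((i,j,k)-u_\ell\bigr)\) equals \(\sum_{\varepsilon'}w(\varepsilon')T(\varepsilon')\). The map \(s_{n-1}\) is \(K\)-linear, since it is right \(R\)-linear and \(K\) is central. Applying \(s_{n-1}(x_\ell\,\cdot\,)\) to this sum and using~\eqref{eq:s-prepend} gives \(\sum_{\varepsilon'}w(\varepsilon')T((\ell,\varepsilon'))\). Multiplying by \(\kappa_\ell(i,j,k)\) and summing over the admissible \(\ell\) yields \(\sum_{\varepsilon\in W(i,j,k)}w(\varepsilon)T(\varepsilon)\), by the factorization and the decomposition above.

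The only point requiring care is the index bookkeeping in the weight factorization, namely checking that the shifted path of \(\varepsilon'\) agrees with the tail of the path of \(\varepsilon\). There is no cancellation issue even in characteristic \(2\). Distinct words give distinct basis tensors \(T(\varepsilon)\), and the three pieces of the decomposition are disjoint, so no term of the sum is counted twice.
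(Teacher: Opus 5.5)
Your proposal is correct and follows essentially the same route as the paper: induction on \(n\), partitioning \(W(i,j,k)\) by the first letter, the factorization \(w((\ell,\varepsilon'))=\kappa_\ell(i,j,k)\,w(\varepsilon')\), and substitution into~\eqref{eq:Phi-recursion} via~\eqref{eq:s-prepend}. Your explicit check that the tail of the path of \(\varepsilon\) is the path of \(\varepsilon'\) (via \(v_m=\sum_{r>m}u_{\varepsilon_r}\)) makes precise a step the paper only states. Your final remark on the linear independence of the \(T(\varepsilon)\) is harmless but not needed, since the argument is an equality of sums.
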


\begin{proof}
We argue by induction on \(n\). For \(n=0\), the set \(W(0,0,0)\) consists of the empty
word of weight \(1\), and both sides are equal to \(1\otimes 1\).

Let \(n\geq 1\), and suppose that the assertion holds for \(n-1\). Partition \(W(i,j,k)\) according to
the first letter:
\[
        W(i,j,k)
        =\bigl\{(0,\varepsilon')\mid\varepsilon'\in W(i-1,j,k)\bigr\}
        \sqcup\bigl\{(1,\varepsilon')\mid\varepsilon'\in W(i,j-1,k)\bigr\}
        \sqcup\bigl\{(2,\varepsilon')\mid\varepsilon'\in W(i,j,k-1)\bigr\},
\]
where a term is absent if the corresponding coordinate is zero. For
a word \(\varepsilon=(\varepsilon_0,\varepsilon')\), the first step of the corresponding
path starts at the vertex \(v_0=(i,j,k)\), and the remaining steps form
the path corresponding to \(\varepsilon'\); hence, by the definition
of the weight~\eqref{eq:weight},
\[
        w\bigl((\varepsilon_0,\varepsilon')\bigr)
        =\kappa_{\varepsilon_0}(i,j,k)\,w(\varepsilon'),
        \qquad
        \kappa_0(i,j,k)=\rho^{j},
        \quad
        \kappa_1(i,j,k)=\rho^{k},
        \quad
        \kappa_2(i,j,k)=\rho^{i}.
\]
Substitute the induction hypothesis into~\eqref{eq:Phi-recursion} and apply
Lemma~\ref{lem:Phi-support}; for the first term, this gives
\[
        \rho^{j}\,s_{n-1}\bigl(x_0\,\Phi_{n-1}(e_{i-1,j,k})\bigr)
        =\rho^{j}\sum_{\varepsilon'\in W(i-1,j,k)}
        w(\varepsilon')\,s_{n-1}\bigl(x_0\,T(\varepsilon')\bigr)
        =\sum_{\varepsilon'\in W(i-1,j,k)}
        w\bigl((0,\varepsilon')\bigr)\,T\bigl((0,\varepsilon')\bigr),
\]
and similarly for the terms with \(x_1\) and \(x_2\), with weights of the first step
\(\rho^{k}\) and \(\rho^{i}\). By the above partition, summing the three resulting expressions
gives the required formula.
\end{proof}

\subsection{The morphism \(\Psi_*:\Bbar_*(R)\to Q_*\)}

By Proposition~\ref{prop:homotopy}, the maps \(t_*\) are right
\(R\)-linear and satisfy \(t_{n+1}t_n=0\), so
Lemma~\ref{lem:psi} applies. It gives \(\Psi_0(1\otimes 1)=e_{000}\) and, for
\(n\geq 1\) and \(b_1,\ldots,b_n\in B\setminus\{1\}\),
\begin{equation}\label{eq:Psi-recursion}
        \Psi_n\bigl(1\otimes\overline b_1\otimes\cdots\otimes
        \overline b_n\otimes 1\bigr)
        =t_{n-1}\bigl(b_1\,
        \Psi_{n-1}\bigl(1\otimes\overline b_2\otimes\cdots\otimes
        \overline b_n\otimes 1\bigr)\bigr).
\end{equation}
The elements \(1\otimes\overline b_1\otimes\cdots\otimes\overline b_n\otimes 1\)
form a basis of \(\Bbar_n(R)\) as a free \(\Lambda\)-module, so
formula~\eqref{eq:Psi-recursion} uniquely determines \(\Psi_n\); the map is
extended to the whole module \(\Bbar_n(R)\) by \(\Lambda\)-linearity.

\begin{proposition}\label{prop:psi1}
In degree \(1\),
\[
\begin{aligned}
        \Psi_1(1\otimes\overline x_0\otimes 1)&=e_{100},
        &
        \Psi_1(1\otimes\overline x_1\otimes 1)&=e_{010},
        &
        \Psi_1(1\otimes\overline x_2\otimes 1)&=e_{001},
        \\[1mm]
        \Psi_1(1\otimes\overline a\otimes 1)&=x_0e_{010}+e_{100}x_1,
        &
        \Psi_1(1\otimes\overline b\otimes 1)&=x_1e_{001}+e_{010}x_2,
        &
        \Psi_1(1\otimes\overline c\otimes 1)&=x_2e_{100}+e_{001}x_0,
\end{aligned}
\]
\[
        \Psi_1(1\otimes\overline w\otimes 1)
        =ae_{001}+x_0e_{010}x_2+e_{100}b.
\]
\end{proposition}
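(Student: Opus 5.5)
The plan is to specialize the recursion~\eqref{eq:Psi-recursion} to $n=1$ and then read the answers off the explicit formulas for $t_0$ from Section~\ref{sec:homotopy}. By Lemma~\ref{lem:psi} (applicable by Proposition~\ref{prop:homotopy}) we have $\Psi_0(1\otimes1)=t_{-1}(1)=e_{000}$. Hence, for every $\beta\in B\setminus\{1\}$,
\[
\Psi_1(1\otimes\overline\beta\otimes1)=t_0\bigl(\beta\,\Psi_0(1\otimes1)\bigr)=t_0(\beta e_{000}).
\]
Since $\beta e_{000}$ is one of the right $R$-basis elements of $Q_0$ on which $t_0$ was defined, no right-linearity rewriting is needed. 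It only remains to evaluate the seven defining formulas at $(i,j,k)=(0,0,0)$.

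The evaluation goes case by case, checking each indicator at $i=j=k=0$; all powers of $\rho$ then become $\rho^0=1$.
\begin{itemize}
\item For $x_0$, the condition ``$k>0$ or $j=0$'' holds, giving $e_{100}$.
\item For $x_1$, the condition $k=0$ holds, giving $e_{010}$.
\item For $x_2$, the condition $i=0$ holds, giving $e_{001}$.
\item For $a$, the case $k=0$ applies and $\mathbf 1_{j=0}=1$, giving $x_0e_{010}+e_{100}x_1$.
\item For $b$, we get $x_1e_{001}+\mathbf 1_{k=0}\,e_{010}x_2=x_1e_{001}+e_{010}x_2$.
\item For $c$, the two terms with $\mathbf 1_{j>0}$ vanish and $\mathbf 1_{i=0}=1$, leaving $x_2e_{100}+e_{001}x_0$.
\item For $w$, all indicators equal $1$, giving $ae_{001}+x_0e_{010}x_2+e_{100}b$.
\end{itemize}
These are exactly the stated values.

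There is no real obstacle; the only care needed is reading the indicator conditions correctly at the origin, in particular in the four-term formula for $t_n(ce_{ijk})$. As an independent sanity check, I would verify the chain-map identity $d_1\Psi_1=\Psi_0b_1$ on one nontrivial generator, say $\overline w$. Here $b_1(1\otimes\overline w\otimes1)=w\otimes1+1\otimes w$, whose image is $we_{000}+e_{000}w$. On the other side, $d_1$ applied to $ae_{001}+x_0e_{010}x_2+e_{100}b$ telescopes to the same element, using $ax_2=w$, $x_0b=w$ and characteristic $2$.
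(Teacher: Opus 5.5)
Your proposal is correct and follows the paper's proof: you specialize \eqref{eq:Psi-recursion} to $n=1$ to get $\Psi_1(1\otimes\overline\beta\otimes1)=t_0(\beta e_{000})$, then evaluate the defining formulas for $t$ at $(i,j,k)=(0,0,0)$. Your case-by-case readings of the indicators are all right, and the check $d_1\Psi_1=\Psi_0 b_1$ on $\overline w$ is correct, though not needed for the proof.
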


\begin{proof}
By formula~\eqref{eq:Psi-recursion},
\(\Psi_1(1\otimes\overline\beta\otimes 1)=t_0(\beta e_{000})\) for
\(\beta\in B\setminus\{1\}\); it remains to substitute \((i,j,k)=(0,0,0)\) into
the defining formulas for \(t\).
\end{proof}

\medskip
\noindent\textbf{Distinguished words.}
We use power notation for words: \(0^i\) denotes \(i\) consecutive occurrences of the symbol \(0\), and similarly for \(1^j\), \(2^k\); blocks with exponent
zero are omitted. For \(i,j,k\geq 0\), \(i+j+k=n\), we distinguish the
following word in the set \(W(i,j,k)\):
\begin{equation}\label{eq:distinguished-word}
        \varepsilon^{*}_{ijk}
        =
        \begin{cases}
        (1^j,0^i),& k=0,\\[1mm]
        (0^i,2^k,1^j),& k>0.
\end{cases}
\end{equation}
The order of the blocks for \(k=0\) differs from that obtained by formally
substituting \(k=0\) into the second line of~\eqref{eq:distinguished-word}; this
corresponds to the case distinction in~\eqref{eq:t-x0} below. Since the word
\(\varepsilon^{*}_{ijk}\) contains exactly \(i\) occurrences of \(0\), \(j\)
occurrences of \(1\), and \(k\) occurrences of \(2\), the assignment
\((i,j,k)\mapsto\varepsilon^{*}_{ijk}\) is injective. A word that is not
distinguished for any triple \((i,j,k)\) is called non-distinguished.

\begin{proposition}\label{prop:sparse-Psi}
Let \(i,j,k\geq 0\) and \(i+j+k=n\). Then
\begin{enumerate}[label=\textup{(\roman*)}]
\item for the distinguished word
\[
        \Psi_n\bigl(T(\varepsilon^{*}_{ijk})\bigr)
        =
        \begin{cases}
        e_{ij0},& k=0,\\[1mm]
        \rho^{-ij}e_{ijk},& k>0;
        \end{cases}
\]
\item \(\Psi_n\bigl(T(\varepsilon)\bigr)=0\) for every non-distinguished
word \(\varepsilon\in\{0,1,2\}^n\).
\end{enumerate}
In particular, among the words in \(W(i,j,k)\), exactly one word has nonzero
image under \(\Psi_n\), namely \(\varepsilon^{*}_{ijk}\).
\end{proposition}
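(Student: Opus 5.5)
The proof goes by induction on $n$, using the recursion~\eqref{eq:Psi-recursion} together with the explicit values of $t_{n-1}$ on the elements $x_\ell e_{ijk}$. For $n=0$ the empty word is $\varepsilon^*_{000}$, and $\Psi_0(T(\;))=e_{000}$, which is (i). For $n\ge1$ write $\varepsilon=(\ell,\varepsilon')$ with $\ell\in\{0,1,2\}$ and $\varepsilon'\in\{0,1,2\}^{n-1}$. Since $T(\varepsilon)=1\otimes\overline x_\ell\otimes\cdots\otimes1$, formula~\eqref{eq:Psi-recursion} gives
\[
\Psi_n(T(\varepsilon))=t_{n-1}\bigl(x_\ell\,\Psi_{n-1}(T(\varepsilon'))\bigr).
\]
By the induction hypothesis, $\Psi_{n-1}(T(\varepsilon'))$ is either $0$ (when $\varepsilon'$ is non-distinguished) or a nonzero scalar multiple of a single generator $e_{i'j'k'}$ (when $\varepsilon'=\varepsilon^*_{i'j'k'}$). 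In the second case, $K$-linearity of $t_{n-1}$ reduces everything to the three table entries $t(x_0e_{i'j'k'})$, $t(x_1e_{i'j'k'})$ and $t(x_2e_{i'j'k'})$. These are nonzero exactly when, respectively, $k'>0$ or $j'=0$; $k'=0$; $i'=0$.

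Next I run through the cases with $\varepsilon'=\varepsilon^*_{i'j'k'}$ and check that whenever the result is nonzero, $\varepsilon$ is the distinguished word for $(i'+\mathbf 1_{\ell=0},\,j'+\mathbf 1_{\ell=1},\,k'+\mathbf 1_{\ell=2})$ with the stated coefficient.
\begin{itemize}
\item $\ell=0$, $k'>0$. Here $\varepsilon=(0^{i'+1},2^{k'},1^{j'})$, and the coefficient is $\rho^{-j'}\rho^{-i'j'}=\rho^{-(i'+1)j'}$.
\item $\ell=0$, $k'=j'=0$. Here $\varepsilon=0^{i'+1}$, and the coefficient is $1$.
\item $\ell=1$, $k'=0$. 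Here $\varepsilon=(1^{j'+1},0^{i'})$, giving $e_{i',j'+1,0}$ with coefficient $1$.
\item $\ell=2$, $i'=0$. Here $\varepsilon=(2^{k'+1},1^{j'})$ (this also covers $k'=0$, where $\varepsilon=(2,1^{j'})$), giving $e_{0,j',k'+1}$. The required coefficient is $\rho^{-0\cdot j'}=1$, which matches.
\end{itemize}
This shows that any word with nonzero image is distinguished, with the right value. Hence (ii) holds: for a non-distinguished $\varepsilon$, either $\varepsilon'$ is non-distinguished, or $\varepsilon'$ is distinguished but the relevant table entry vanishes. In the latter case the output is zero, since otherwise $\varepsilon$ would be distinguished by the case analysis above.

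For (i) I also need the converse: the tail of every distinguished word $\varepsilon^*_{ijk}$ is distinguished and falls into a nonvanishing case.
\begin{itemize}
\item $k>0$, $i>0$: the tail is $(0^{i-1},2^k,1^j)$, and $k'=k>0$.
\item $k>0$, $i=0$: the tail is $\varepsilon^*_{0,j,k-1}$, and $i'=0$.
\item $k=0$, $j>0$: the tail is $(1^{j-1},0^i)$, and $k'=0$.
\item $k=0$, $j=0$: the tail is $0^{i-1}$, and $j'=k'=0$.
\end{itemize}
The final assertion of the proposition then follows from injectivity of $(i,j,k)\mapsto\varepsilon^*_{ijk}$.

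I expect the main obstacle to be the bookkeeping at the boundaries. One issue is the switch between the two shapes of $\varepsilon^*_{ijk}$ at $k=0$. For instance, $\ell=2$ applied to $\varepsilon'=1^{j'}$ (a $k'=0$ word) must produce the $k>0$ shape $(2,1^{j'})$; prepending $0$ to $(1^{j'},0^{i'})$ with $j'>0$ must give zero, and it does, because $t(x_0e_{i'j'0})=0$ for $j'>0$. The other issue is tracking the exponent $\rho^{-ij}$ in these cases. I would check these cases first and carefully against the table defining $t_n$.
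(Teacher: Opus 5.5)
Your proof is correct and follows essentially the same route as the paper. Both use simultaneous induction on $n$ via the recursion $\Psi_n(T(\varepsilon))=t_{n-1}(x_\ell\,\Psi_{n-1}(T(\varepsilon')))$, split by the first letter and by whether the suffix is distinguished, use the same table entries of $t$, and check that the suffix of every distinguished word is distinguished and lands in a nonvanishing case. The only difference is cosmetic: you derive (ii) by contraposition from the list of nonvanishing cases, whereas the paper checks directly in each vanishing subcase that the resulting word is non-distinguished.
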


\begin{proof}
We use the following specializations of the weak self-homotopy:
\begin{equation}\label{eq:t-x0}
        t_m(x_0e_{IJK})
        =
        \begin{cases}
        \rho^{-J}e_{I+1,J,K},& K>0\ \text{or}\ J=0,\\
        0,& K=0,\ J>0,
        \end{cases}
\end{equation}
\begin{equation}\label{eq:t-x1}
        t_m(x_1e_{IJK})
        =
        \begin{cases}
        e_{I,J+1,0},& K=0,\\
        0,& K>0,
        \end{cases}
        \qquad\qquad
        t_m(x_2e_{IJK})
        =
        \begin{cases}
        e_{0,J,K+1},& I=0,\\
        0,& I>0.
        \end{cases}
\end{equation}
We prove the two assertions simultaneously by induction on \(n\). Let \(A_n\)
denote assertion \textup{(i)} for all triples \(i+j+k=n\), and let
\(B_n\) denote assertion \textup{(ii)} for all words of length \(n\).

\medskip
\noindent\textbf{Base case.}
For \(n=0\), there is a unique word, the empty word; it is distinguished and corresponds
to the triple \((i,j,k)=(0,0,0)\). By definition,
\(\Psi_0(1\otimes 1)=e_{000}\), so \(A_0\) holds. There are no non-distinguished
words of length \(0\), so \(B_0\) also holds.

\medskip
\noindent\textbf{Induction hypothesis.}
Let \(n\geq 1\), and suppose that \(A_{n-1}\) and \(B_{n-1}\) have been proved.
Fix a word
\[
        \varepsilon=(r,\varepsilon_2,\ldots,\varepsilon_n),
        \qquad r\in\{0,1,2\},
        \qquad
        \varepsilon'=(\varepsilon_2,\ldots,\varepsilon_n).
\]
By formula~\eqref{eq:Psi-recursion},
\begin{equation}\label{eq:Psi-step}
        \Psi_n\bigl(T(\varepsilon)\bigr)
        =t_{n-1}\bigl(x_r\,\Psi_{n-1}\bigl(T(\varepsilon')\bigr)\bigr).
\end{equation}

\medskip
\noindent\textbf{Suffixes of distinguished words.}
The suffix of every nonempty distinguished word is again a distinguished
word. Indeed, let \(k=0\), that is,
\(\varepsilon^{*}_{ij0}=(1^j,0^i)\). For \(j>0\), the suffix is
\[
        (1^{j-1},0^i)=\varepsilon^{*}_{i,j-1,0},
\]
and for \(j=0\), \(i>0\), it is
\[
        (0^{i-1})=\varepsilon^{*}_{i-1,0,0}.
\]
Now let \(k>0\), that is, \(\varepsilon^{*}_{ijk}=(0^i,2^k,1^j)\).
For \(i>0\), the suffix is
\[
        (0^{i-1},2^k,1^j)=\varepsilon^{*}_{i-1,j,k};
\]
for \(i=0\), \(k>1\), it is
\[
        (2^{k-1},1^j)=\varepsilon^{*}_{0,j,k-1};
\]
for \(i=0\), \(k=1\), it is
\[
        (1^j)=\varepsilon^{*}_{0,j,0}.
\]
In particular, by contraposition,
\begin{equation}\label{eq:contrapositive}
        \varepsilon'\ \text{is non-distinguished}
        \quad\Longrightarrow\quad
        \varepsilon\ \text{is non-distinguished}.
\end{equation}

\medskip
\noindent\textbf{Case 1: the suffix \(\varepsilon'\) is non-distinguished.}
By the induction hypothesis \(B_{n-1}\),
\[
        \Psi_{n-1}\bigl(T(\varepsilon')\bigr)=0,
\]
and hence~\eqref{eq:Psi-step} gives
\[
        \Psi_n\bigl(T(\varepsilon)\bigr)=t_{n-1}(x_r\cdot 0)=0.
\]
By~\eqref{eq:contrapositive}, the word \(\varepsilon\) is also non-distinguished, in
agreement with \(B_n\). No distinguished word of length \(n\) falls into this
case, since its suffix is distinguished.

\medskip
\noindent\textbf{Case 2: the suffix is distinguished,}
\(\varepsilon'=\varepsilon^{*}_{IJK}\), \(I+J+K=n-1\).
By the induction hypothesis \(A_{n-1}\),
\begin{equation}\label{eq:Psi-suffix}
        \Psi_{n-1}\bigl(T(\varepsilon')\bigr)
        =
        \begin{cases}
        e_{IJ0},& K=0,\\[1mm]
        \rho^{-IJ}e_{IJK},& K>0.
        \end{cases}
\end{equation}

\smallskip
\noindent\textbf{Subcase 2.1: \(K=0\),}
\(\varepsilon'=(1^J,0^I)\), \(\Psi_{n-1}(T(\varepsilon'))=e_{IJ0}\).

\smallskip
\noindent\emph{Subcase 2.1.1: \(r=1\).}
It follows from~\eqref{eq:t-x1} that
\[
        \Psi_n\bigl(T(\varepsilon)\bigr)
        =t_{n-1}(x_1e_{IJ0})
        =e_{I,J+1,0},
        \qquad
        \varepsilon=(1,1^J,0^I)=(1^{J+1},0^I)
        =\varepsilon^{*}_{I,J+1,0}.
\]

\smallskip
\noindent\emph{Subcase 2.1.2: \(r=2\).}
It follows from~\eqref{eq:t-x1} that
\[
        t_{n-1}(x_2e_{IJ0})
        =
        \begin{cases}
        e_{0,J,1},& I=0,\\
        0,& I>0.
        \end{cases}
\]
If \(I=0\), then
\[
        \Psi_n\bigl(T(\varepsilon)\bigr)=e_{0,J,1}=\rho^{-0\cdot J}e_{0,J,1},
        \qquad
        \varepsilon=(2,1^J)=\varepsilon^{*}_{0,J,1}.
\]
If \(I>0\), then
\[
        \Psi_n\bigl(T(\varepsilon)\bigr)=0,
        \qquad
        \varepsilon=(2,1^J,0^I);
\]
the word \(\varepsilon\) is non-distinguished, since in a distinguished word with
a positive number of symbols \(2\), all symbols \(0\) occur to the left of
all symbols \(2\).

\smallskip
\noindent\emph{Subcase 2.1.3: \(r=0\).}
It follows from~\eqref{eq:t-x0} that
\[
        t_{n-1}(x_0e_{IJ0})
        =
        \begin{cases}
        e_{I+1,0,0},& J=0,\\
        0,& J>0.
        \end{cases}
\]
If \(J=0\), then
\[
        \Psi_n\bigl(T(\varepsilon)\bigr)=e_{I+1,0,0},
        \qquad
        \varepsilon=(0,0^I)=(0^{I+1})=\varepsilon^{*}_{I+1,0,0}.
\]
If \(J>0\), then
\[
        \Psi_n\bigl(T(\varepsilon)\bigr)=0,
        \qquad
        \varepsilon=(0,1^J,0^I);
\]
the word \(\varepsilon\) is non-distinguished, since in the absence of symbols \(2\),
a distinguished word has the form \((1^j,0^i)\).

\smallskip
\noindent\textbf{Subcase 2.2: \(K>0\),}
\(\varepsilon'=(0^I,2^K,1^J)\),
\(\Psi_{n-1}(T(\varepsilon'))=\rho^{-IJ}e_{IJK}\).

\smallskip
\noindent\emph{Subcase 2.2.1: \(r=0\).}
Since \(K>0\), it follows from~\eqref{eq:t-x0} that
\(t_{n-1}(x_0e_{IJK})=\rho^{-J}e_{I+1,J,K}\), and hence
\[
        \Psi_n\bigl(T(\varepsilon)\bigr)
        =\rho^{-IJ}\,t_{n-1}(x_0e_{IJK})
        =\rho^{-IJ}\rho^{-J}e_{I+1,J,K}
        =\rho^{-(I+1)J}e_{I+1,J,K},
\]
and in this case
\[
        \varepsilon=(0,0^I,2^K,1^J)=(0^{I+1},2^K,1^J)
        =\varepsilon^{*}_{I+1,J,K}.
\]

\smallskip
\noindent\emph{Subcase 2.2.2: \(r=1\).}
Since \(K>0\), it follows from~\eqref{eq:t-x1} that
\[
        t_{n-1}(x_1e_{IJK})=0,
        \qquad
        \Psi_n\bigl(T(\varepsilon)\bigr)=0,
        \qquad
        \varepsilon=(1,0^I,2^K,1^J);
\]
the word \(\varepsilon\) is non-distinguished, since when the number of symbols
\(2\) is positive, all symbols \(1\) in a distinguished word occur to the right of all
symbols \(2\).

\smallskip
\noindent\emph{Subcase 2.2.3: \(r=2\).}
It follows from~\eqref{eq:t-x1} that
\[
        t_{n-1}(x_2e_{IJK})
        =
        \begin{cases}
        e_{0,J,K+1},& I=0,\\
        0,& I>0.
        \end{cases}
\]
If \(I=0\), then the coefficient \(\rho^{-IJ}\) is \(1\) and
\[
        \Psi_n\bigl(T(\varepsilon)\bigr)=e_{0,J,K+1},
        \qquad
        \varepsilon=(2,2^K,1^J)=(2^{K+1},1^J)
        =\varepsilon^{*}_{0,J,K+1}.
\]
If \(I>0\), then
\[
        \Psi_n\bigl(T(\varepsilon)\bigr)=0,
        \qquad
        \varepsilon=(2,0^I,2^K,1^J);
\]
the word \(\varepsilon\) is non-distinguished, since in a distinguished word with
a positive number of symbols \(2\), all symbols \(0\) occur to the left of
all symbols \(2\).

\medskip
\noindent\textbf{Exhaustiveness of the cases.}
Every nonempty distinguished word decomposes uniquely into a pair consisting of its first
letter and its distinguished suffix:
\begin{itemize}[leftmargin=2em]
\item \(\varepsilon^{*}_{ij0}=(1,\varepsilon^{*}_{i,j-1,0})\) for
\(j>0\) --- Subcase~2.1.1;
\item \(\varepsilon^{*}_{i00}=(0,\varepsilon^{*}_{i-1,0,0})\) for
\(i>0\) --- Subcase~2.1.3 with \(J=0\);
\item \(\varepsilon^{*}_{ijk}=(0,\varepsilon^{*}_{i-1,j,k})\) for
\(k>0\), \(i>0\) --- Subcase~2.2.1;
\item \(\varepsilon^{*}_{0jk}=(2,\varepsilon^{*}_{0,j,k-1})\) for
\(k>1\) --- Subcase~2.2.3 with \(I=0\);
\item \(\varepsilon^{*}_{0j1}=(2,\varepsilon^{*}_{0,j,0})\) ---
Subcase~2.1.2 with \(I=0\).
\end{itemize}
In all the cases listed above, the computed values agree with the formulas in
assertion \(A_n\), which is therefore proved. Now let
\(\varepsilon\) be a non-distinguished word of length \(n\). If its suffix
is non-distinguished, then \(\Psi_n(T(\varepsilon))=0\) by Case 1; if its suffix
is distinguished, then the word is covered by one of the six subcases of Case 2 and,
being non-distinguished, belongs to one of the cases in which the image is zero. This proves
\(B_n\), and the proposition holds for all \(n\geq 0\) by induction.
\end{proof}

\begin{corollary}\label{cor:Psi-Phi-identity}
The comparison morphisms constructed above satisfy
\[
        \Psi_*\Phi_*=\id_{Q_*}.
\]
\end{corollary}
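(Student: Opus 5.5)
Both $\Psi_*$ and $\Phi_*$ are $\Lambda$-linear, and so is their composite. It therefore suffices to check $\Psi_n\Phi_n(e_{ijk})=e_{ijk}$ on the free generators $e_{ijk}$, $i+j+k=n$, of $Q_n$. The plan is to combine the path expansion of $\Phi_n$ (Proposition~\ref{prop:phi-paths}) with the sparsity of $\Psi_n$ (Proposition~\ref{prop:sparse-Psi}). By Proposition~\ref{prop:phi-paths} and linearity,
\[
\Psi_n\Phi_n(e_{ijk})=\sum_{\varepsilon\in W(i,j,k)}w(\varepsilon)\,\Psi_n\bigl(T(\varepsilon)\bigr).
\]
By Proposition~\ref{prop:sparse-Psi}, only the distinguished word $\varepsilon^*_{ijk}$ contributes. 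Hence
\[
\Psi_n\Phi_n(e_{ijk})=w(\varepsilon^*_{ijk})\cdot\begin{cases}e_{ij0},&k=0,\\ \rho^{-ij}e_{ijk},&k>0.\end{cases}
\]

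It remains to compute the weight of the distinguished word from~\eqref{eq:weight} by following its lattice path from $(i,j,k)$.

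\emph{Case $k=0$.} The word is $(1^j,0^i)$. The $1$-steps are taken at vertices with third coordinate $0$, so each contributes $\rho^0=1$. The $0$-steps are then taken at vertices $(\alpha,0,0)$, so each contributes $\rho^{0}=1$. Thus $w=1$, and the result is $e_{ij0}$.

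\emph{Case $k>0$.} The word is $(0^i,2^k,1^j)$. Each of the $i$ steps of type $0$ starts at a vertex with second coordinate $j$ and contributes $\rho^{j}$, for a total of $\rho^{ij}$. The $2$-steps start at vertices $(0,j,\gamma)$ and contribute $\rho^0=1$. The $1$-steps start at $(0,\beta,0)$ and contribute $\rho^0=1$. Thus $w=\rho^{ij}$, which cancels the factor $\rho^{-ij}$ and gives $e_{ijk}$.

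This establishes $\Psi_n\Phi_n=\id_{Q_n}$ for all $n\ge0$; the case $n=0$ is immediate, since $\Psi_0\Phi_0(e_{000})=\Psi_0(1\otimes1)=e_{000}$. The only delicate point is the weight computation, namely tracking which vertex each step starts from. The main content is already contained in Proposition~\ref{prop:sparse-Psi}.
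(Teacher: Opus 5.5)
Your proposal is correct and follows the paper's proof essentially step for step. You reduce to the $\Lambda$-basis $e_{ijk}$, use Propositions~\ref{prop:phi-paths} and~\ref{prop:sparse-Psi} to isolate the single distinguished word $\varepsilon^{*}_{ijk}$, and compute its weight along the lattice path ($1$ for $k=0$, $\rho^{ij}$ for $k>0$), which cancels the coefficient from Proposition~\ref{prop:sparse-Psi}\textup{(i)}.
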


\begin{proof}
Both morphisms are \(\Lambda\)-linear, and the elements \(e_{ijk}\), \(i+j+k=n\),
form a \(\Lambda\)-basis of \(Q_n\); hence it suffices to verify
the equality \(\Psi_n\Phi_n(e_{ijk})=e_{ijk}\).

By Proposition~\ref{prop:phi-paths} and
Proposition~\ref{prop:sparse-Psi}, in the sum
\[
        \Psi_n\Phi_n(e_{ijk})
        =\sum_{\varepsilon\in W(i,j,k)}
        w(\varepsilon)\,\Psi_n\bigl(T(\varepsilon)\bigr)
\]
only the summand corresponding to \(\varepsilon^{*}_{ijk}\) is nonzero, so
that
\begin{equation}\label{eq:Psi-Phi-distinguished}
        \Psi_n\Phi_n(e_{ijk})
        =w(\varepsilon^{*}_{ijk})\,
        \Psi_n\bigl(T(\varepsilon^{*}_{ijk})\bigr).
\end{equation}

Let \(k=0\). The word \(\varepsilon^{*}_{ij0}=(1^j,0^i)\) corresponds to the path
\[
        (i,j,0)\xrightarrow{\ 1^j\ }(i,0,0)\xrightarrow{\ 0^i\ }(0,0,0).
\]
The first \(j\) steps start at vertices with third coordinate \(0\), and the last
\(i\) steps start at vertices with second coordinate \(0\). Thus,
by~\eqref{eq:weight}, each step has weight \(\rho^{0}=1\), and
\(w(\varepsilon^{*}_{ij0})=1\). Together with
Proposition~\ref{prop:sparse-Psi}\textup{(i)},
formula~\eqref{eq:Psi-Phi-distinguished} gives
\(\Psi_n\Phi_n(e_{ij0})=e_{ij0}\).

Let \(k>0\). The word \(\varepsilon^{*}_{ijk}=(0^i,2^k,1^j)\) corresponds to the path
\[
        (i,j,k)\xrightarrow{\ 0^i\ }(0,j,k)
        \xrightarrow{\ 2^k\ }(0,j,0)
        \xrightarrow{\ 1^j\ }(0,0,0).
\]
The first \(i\) steps start at vertices with second coordinate \(j\) and each have
weight \(\rho^{j}\); the next \(k\) steps start at vertices with first
coordinate \(0\), and the last \(j\) steps start at vertices with third
coordinate \(0\), and their weights are \(1\). Hence
\(w(\varepsilon^{*}_{ijk})=\rho^{ij}\), and
formula~\eqref{eq:Psi-Phi-distinguished}, together with
Proposition~\ref{prop:sparse-Psi}\textup{(i)}, gives
\[
        \Psi_n\Phi_n(e_{ijk})
        =\rho^{ij}\rho^{-ij}e_{ijk}
        =e_{ijk}. \qedhere
\]
\end{proof}

\parttitle{Part II. The BV structure on \(\HH^*(R)\)}

\section{Complete computation of the BV operator}\label{sec:BV}

In the preceding sections, we constructed the comparison morphisms
\[
        \Psi_*:\Bbar_*(R)\longrightarrow Q_*,
        \qquad
        \Phi_*:Q_*\longrightarrow\Bbar_*(R),
        \qquad
        \Psi_*\Phi_*=\id_{Q_*}.
\]
In this section, the operator $\Delta$ is transferred to the minimal
resolution and computed on products of the multiplicative
generators of the algebra $\HH^*(R)$.

\subsection{The symmetric structure of the algebra}

Define a linear functional
\[
        \tau:R\longrightarrow K
\]
as the coefficient of
\[
        w=x_0x_1x_2
\]
in the expansion of an element in the basis
\[
        B=\{1,x_0,x_1,x_2,a,b,c,w\}.
\]
Set
\[
        \langle u,v\rangle=\tau(uv).
\]

\begin{proposition}\label{prop:BV-symmetric-revised}
The form \(\langle-,-\rangle\) is associative, symmetric, and nondegenerate.
Consequently, \(R\) is a symmetric algebra.
\end{proposition}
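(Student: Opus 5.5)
Associativity is immediate: $\langle uv,z\rangle=\tau((uv)z)=\tau(u(vz))=\langle u,vz\rangle$ by associativity of multiplication in $R$. The remaining work is to compute the multiplication table of $R$ in the basis $B$ and read off the Gram matrix $(\tau(\beta\beta'))_{\beta,\beta'\in B}$.

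I will first record the products of basis elements, using $x_i^2=0$ and $x_{i+1}x_i=\rho x_ix_{i+1}$ (indices mod $3$). Since $R$ is graded by word length, $\tau(\beta\beta')\neq0$ only when the lengths of $\beta$ and $\beta'$ add to $3$, because $w$ has degree $3$ and $J^4=0$. So only the pairs $(1,w)$, $(x_i,\text{degree }2)$ and their reverses matter.

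\textbf{Degree $0$ and $3$.} Here $\tau(1\cdot w)=\tau(w\cdot1)=1$.

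\textbf{Degrees $1$ and $2$.} I compute $x_i\cdot\{a,b,c\}$ and $\{a,b,c\}\cdot x_i$, reducing each product to a multiple of $w$ or to $0$. For example:
\begin{itemize}
\item $x_0b=w$ and $bx_0=x_1x_2x_0=\rho x_1x_0x_2=\rho^2x_0x_1x_2=\rho^2w$.
\item $x_1c=x_1x_2x_0=\rho^2 w$ and $cx_1=x_2x_0x_1=\rho x_0x_2x_1=\rho x_0\cdot\rho x_1x_2=\rho^2 w$.
\item $x_2a=x_2x_0x_1=\rho^2 w$ and $ax_2=w$.
\end{itemize}
Products such as $x_0a$, $x_0c=x_0x_2x_0=\rho x_0x_0x_2=0$, $ax_0$ and $cx_0$ vanish because each contains a repeated generator that can be moved adjacent to itself and killed by $x_i^2=0$. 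Thus in each of these degrees the Gram block is a permutation matrix with nonzero entries, and nondegeneracy follows since $\rho\neq0$.

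\textbf{Symmetry.} This is the main obstacle. The form is symmetric iff $\tau(uv)=\tau(vu)$, but the computation above gives $\tau(x_0b)=1$ while $\tau(bx_0)=\rho^2$. So $\tau$ itself is not symmetric unless $\rho^2=1$, i.e.\ $\rho=1$ in characteristic $2$. I would therefore recheck the multiplication table carefully, including whether the convention is $x_{i+1}x_i=\rho x_ix_{i+1}$ for $i=2$, namely $x_0x_2=\rho x_2x_0$, since this changes the sign pattern of the reductions. Before writing anything I would compute all nine pairings $\tau(x_i\gamma)$ and $\tau(\gamma x_i)$ for $\gamma\in\{a,b,c\}$ and compare them directly.

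If the discrepancy persists, the remedy is a weighted functional. Modify $\tau$ to the general linear form $\tau'=\lambda\tau$ (the socle is $Kw$, so any symmetrizing form vanishing on $J^{<3}$ is a multiple of $\tau$); symmetry then requires an automorphism-free trace condition that fails as well. In that case the claim would need $\tau$ to be defined differently, for instance through the Nakayama automorphism being inner.

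Realistically, I expect careful recomputation to show that $bx_0$ reduces to $w$ with the correct relation usage, giving $\tau(uv)=\tau(vu)$ on all basis pairs. The symmetry check on the nine degree-$(1,2)$ pairs is then the crux, with degree $(0,3)$ trivial. Symmetry on the basis plus bilinearity then gives $\langle u,v\rangle=\langle v,u\rangle$. By Definition~\ref{def:sym}, $R$ is symmetric.
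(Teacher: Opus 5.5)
Your associativity argument and the degree bookkeeping (only pairs of lengths $0+3$ and $1+2$ can pair nontrivially) are fine, but the proposal does not prove symmetry, which is the heart of the statement. Your displayed reductions use the relation for $i=2$ backwards. With indices mod $3$ it reads $x_0x_2=\rho x_2x_0$, i.e.\ $x_2x_0=\rho^{-1}x_0x_2$, not $x_2x_0=\rho x_0x_2$. Consequently
\[
bx_0=x_1c=x_1x_2x_0=\rho^{-1}x_1x_0x_2=\rho^{-1}\rho\,x_0x_1x_2=w,
\]
and
\[
cx_1=x_2a=x_2x_0x_1=\rho^{-1}x_0x_2x_1=\rho^{-1}\rho\,x_0x_1x_2=w,
\]
so all six nonzero pairings of type $(1,2)$ equal $1$. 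This is the paper's computation: it checks $bx_0=w$ and gets the rest from the cyclic automorphism $x_i\mapsto x_{i+1}$, which fixes $w$. As written, however, your text asserts $\tau(x_0b)=1\neq\rho^2=\tau(bx_0)$, which would make the proposition false for $\rho\neq1$. You then only say you ``expect'' a recomputation to give $bx_0=w$, so the step carrying the whole claim is left unverified. Your fallback discussion (rescaling $\tau$, an inner Nakayama automorphism) is not an argument that could replace this check.

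To close the gap, replace the incorrect reductions of $bx_0$, $x_1c$, $cx_1$, $x_2a$ by the correct ones above and record the resulting Gram matrix:
\[
\langle1,w\rangle=\langle x_0,b\rangle=\langle x_1,c\rangle=\langle x_2,a\rangle=1,
\]
together with their transposes, and all other pairings of basis elements zero. This is a symmetric permutation matrix, realizing the involution $1\leftrightarrow w$, $x_0\leftrightarrow b$, $x_1\leftrightarrow c$, $x_2\leftrightarrow a$. It gives symmetry and nondegeneracy at once; no appeal to $\rho\neq0$ is needed for the latter. It also gives the dual basis used later in the paper.
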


\begin{proof}
Associativity follows from the equality
\[
        \langle uv,z\rangle=\tau(uvz)=\langle u,vz\rangle.
\]
The nonzero pairings of basis elements are
\[
        \langle1,w\rangle=\langle w,1\rangle=1,
\]
\[
        \langle x_0,b\rangle=\langle b,x_0\rangle=1,
        \qquad
        \langle x_1,c\rangle=\langle c,x_1\rangle=1,
\]
\[
        \langle x_2,a\rangle=\langle a,x_2\rangle=1.
\]
For example,
\[
        x_0b=w,
\]
and also
\[
        bx_0=x_1x_2x_0
        =\rho^{-1}x_1x_0x_2
        =x_0x_1x_2=w.
\]
The remaining equalities follow by cyclically permuting the indices.
Thus, the dual basis is
\[
\begin{array}{c|cccccccc}
 u&1&x_0&x_1&x_2&a&b&c&w\\ \hline
 u^\vee&w&b&c&a&x_2&x_0&x_1&1.
\end{array}
\]
This proves both symmetry and nondegeneracy of the form.
\end{proof}

\subsection{Transferring the operator $\Delta$ to the minimal resolution}

By Proposition~\ref{prop:BV-symmetric-revised}, the algebra $R$
is symmetric, so Theorem~\ref{thm:Tradler} applies: the algebra
$\HH^*(R)$ is a BV algebra, and on the normalized complex
the operator $\Delta$ is given by
\begin{equation}\label{eq:Tradler-R}
        \bigl\langle\Delta(f)(u_1,\dots,u_{n-1}),\,u_n\bigr\rangle
        =\sum_{r=1}^{n}
        \bigl\langle f(u_r,\dots,u_n,u_1,\dots,u_{r-1}),\,1\bigr\rangle,
        \qquad u_1,\dots,u_n\in R,
\end{equation}
where the signs disappear because $\Char K=2$.

The operator $\Delta$ is transferred to the minimal resolution by the
comparison morphisms: for $f\in\operatorname{Hom}_\Lambda(Q_n,R)$,
we set
\begin{equation}\label{eq:Delta-transfer-revised}
        \Delta(f)=\Delta(f\circ\Psi_n)\circ\Phi_{n-1}
        \in\operatorname{Hom}_\Lambda(Q_{n-1},R),
\end{equation}
and $\Delta=0$ in degree $0$. Since $\Phi_*$ and $\Psi_*$ form a
homotopy equivalence of resolutions, the induced operator on
$\HH^*(R)$ is independent of the choice of representing cocycles in
$\operatorname{Hom}_\Lambda(Q_*,R)$ and
$\operatorname{Hom}_\Lambda(\Bbar_*(R),R)$.

By nondegeneracy of the form, every element $z\in R$ admits the
expansion
\begin{equation}\label{eq:dual-expansion-revised}
        z=\sum_{v\in B}\langle z,v\rangle\,v^\vee,
\end{equation}
where $v^\vee$ is the corresponding dual basis element from
Proposition~\ref{prop:BV-symmetric-revised}.

\subsection{The cochain differential and the ring presentation}
\label{subsec:ring-presentation}

The complex computing $\HH^*(R)$ is obtained by applying
the functor $\operatorname{Hom}_\Lambda(-,R)$ to the minimal
resolution $Q_*$. For $r\in R$, write
\[
[r]_{\alpha,\beta,\gamma}
\in\operatorname{Hom}_\Lambda(Q_{\alpha+\beta+\gamma},R)
\]
for the cochain sending the free generator
$e_{\alpha,\beta,\gamma}$ to $r$ and all other
generators of the same degree to zero. The cochain differential
$\delta_Q(f)=f\circ d$ is given by the formula
\begin{equation}\label{eq:cochain-differential-revised}
\delta_Q([r]_{\alpha,\beta,\gamma})
=[\rho^\beta x_0r+\rho^\gamma r x_0]_{\alpha+1,\beta,\gamma}
+[\rho^\gamma x_1r+\rho^\alpha r x_1]_{\alpha,\beta+1,\gamma}
+[\rho^\alpha x_2r+\rho^\beta r x_2]_{\alpha,\beta,\gamma+1},
\end{equation}
which follows directly from the formulas for \(d\).

Let $\rho\ne1$ and $s=\operatorname{ord}(\rho)$.
We use the additive basis from
\cite[Propositions~3.1, 3.2]{GK06} and the systems of
multiplicative generators from
\cite[Theorems~5.12, 5.13]{GK06}.

The ring presentations in parts~II.2 and~II.3 of
Theorem~2.1 of that paper require a correction,
which will be established below.
We take the following cocycles as representatives of the multiplicative
generators:
\[
h=[w]_{0,0,0},
\]
\[
h_0=[b]_{0,0,0},
\qquad h_1=[c]_{0,0,0},
\qquad h_2=[a]_{0,0,0},
\]
\[
y_0=[x_0]_{1,0,0},
\qquad y_1=[x_1]_{0,1,0},
\qquad y_2=[x_2]_{0,0,1},
\]
\[
p=[1]_{1,1,1},
\]
\[
z_0^{(m)}=[b]_{m,0,0},
\qquad
z_1^{(m)}=[c]_{0,m,0},
\qquad
z_2^{(m)}=[a]_{0,0,m},
\]
where $m\ge1$ for $s=\infty$ and $1\le m<s$
for $1<s<\infty$. In the latter case, we also include
\[
p_0=[1]_{s,0,0},
\qquad p_1=[1]_{0,s,0},
\qquad p_2=[1]_{0,0,s},
\]
\[
t_0=[x_0]_{0,s-1,s-1},
\qquad
t_1=[x_1]_{s-1,0,s-1},
\qquad
t_2=[x_2]_{s-1,s-1,0}.
\]

Henceforth, cocycles and their cohomology classes are denoted
by the same letters.

\begin{remark}\label{rem:GK-presentations}
For the algebras $A_j=K[X_j]/I_j$, $j=7,8$, defined
in \cite[(2.22)--(2.27)]{GK06}, we have
\[
\begin{aligned}
A_7&\not\cong\HH^*(R),&&1<s<\infty,\\
A_8&\not\cong\HH^*(R),&&s=\infty,
\end{aligned}
\]
as graded $K$-algebras.

Indeed, by Proposition~3.2, formulas~(3.26),
(3.27), (3.30), and Theorem~4.2 of~\cite{GK06},
\[
\HH^1(R)
=\bigl\langle y_i,z_i^{(1)},h_i y_i
\mid i=0,1,2\bigr\rangle_K.
\]
The same product formula in $\HH^*(R)$ gives
\[
y_i^2=(z_i^{(1)})^2=(h_i y_i)^2=0,
\qquad i=0,1,2.
\]
Since $\HH^*(R)$ is commutative in characteristic~$2$,
we obtain
\[
u^2=0\qquad\text{for every }u\in\HH^1(R).
\]

On the other hand, consider the homomorphism
\[
\varphi:K[X_j]\longrightarrow K[T],\qquad
y_0\longmapsto T,\qquad
\text{all other generators}\longmapsto0.
\]
Under this substitution, all defining relations
$I_j$ vanish. Hence
$I_j\subseteq\ker\varphi$, and the substitution induces
a homomorphism $A_j\to K[T]$.
Since $y_0^2$ maps to $T^2\ne0$,
we have $y_0^2\ne0$ in $A_j$.

If there were an isomorphism of graded
$K$-algebras $\Phi:A_j\xrightarrow{\sim}\HH^*(R)$,
then $\Phi(y_0)\in\HH^1(R)$ would imply
\[
\Phi(y_0^2)=\Phi(y_0)^2=0,
\]
contradicting the injectivity of $\Phi$.
\end{remark}

\begin{theorem}\label{thm:ring-presentation}
There are isomorphisms of graded $K$-algebras
\[
\begin{aligned}
\HH^*(R)&\cong A_7/(y_0^2,y_1^2,y_2^2),
&&1<s<\infty,\\
\HH^*(R)&\cong A_8/(y_0^2,y_1^2,y_2^2),
&&s=\infty.
\end{aligned}
\]
\end{theorem}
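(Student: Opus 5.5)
We will build a surjection $A_j/(y_0^2,y_1^2,y_2^2)\to\HH^*(R)$ and then show it is injective by comparing dimensions degree by degree.

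The argument relies on the computations of~\cite{GK06} that remain valid: the additive basis (Propositions~3.1, 3.2), the product formulas (Theorem~4.2), and the fact that the listed cocycles generate $\HH^*(R)$ multiplicatively (Theorems~5.12, 5.13). Only the final step of~\cite{GK06}, from multiplicative generators and relations to a presentation, has to be redone.

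First, define $\pi\colon K[X_j]\to\HH^*(R)$ by sending each generator to the cohomology class of the cocycle of the same name listed above. Here $j=7$ when $1<s<\infty$ and $j=8$ when $s=\infty$.

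Second, check that $\pi$ factors through $A_j/(y_0^2,y_1^2,y_2^2)$.

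\begin{itemize}
\item For the relations in $I_j$ other than $y_i^2$, use the product formula of~\cite[Theorem~4.2]{GK06}. It suffices to compute each product at cochain level on $Q_*$ as a cocycle of the form $[r]_{\alpha,\beta,\gamma}$. One then checks that the prescribed combination is a coboundary, using~\eqref{eq:cochain-differential-revised}.
\item For $y_i^2$, the relation is already established in Remark~\ref{rem:GK-presentations}. Alternatively, $y_0^2$ is represented by $[x_0^2]_{2,0,0}=0$, and similarly for $y_1^2,y_2^2$.
\end{itemize}

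Third, surjectivity of the induced map $\bar\pi$ is immediate, since the listed classes generate $\HH^*(R)$.

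The main obstacle is injectivity. For this we show that
\[
\dim_K\bigl(A_j/(y_0^2,y_1^2,y_2^2)\bigr)^n\le\dim_K\HH^n(R)\qquad\text{for every }n .
\]
We would proceed as follows.

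\begin{itemize}
\item Using the relations in $I_j$ together with $y_i^2=0$, exhibit a spanning set of monomials in each degree $n$, namely a normal form.
\item Check that these monomials correspond bijectively to the additive basis of~\cite[Propositions~3.1, 3.2]{GK06}. Then the number of normal monomials equals $\dim\HH^n(R)$, and a surjection between spaces of equal finite dimension is an isomorphism.
\end{itemize}

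The delicate point is to confirm that the normal-form reduction really terminates in the expected monomials. The products that the missing relations $y_i^2=0$ affect are those involving $y_i$ together with periodicity generators such as $p$, $p_i$, $t_i$ and $z_i^{(m)}$. One must make sure no additional relation is hiding there. In particular, a monomial $y_i\cdot(\ldots)$ that~\cite{GK06} used to account for a basis element of $\HH^*(R)$ must not become forced to vanish.

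To guard against this, we would compare normal monomials with basis elements of~\cite{GK06} degree by degree:

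\begin{itemize}
\item separately for each residue of $n$ modulo $s$ when $s<\infty$, and uniformly in $m$ when $s=\infty$;
\item by writing, for each normal monomial, its image as a cocycle $[r]_{\alpha,\beta,\gamma}$;
\item and by verifying that these images are linearly independent modulo coboundaries, via the explicit basis.
\end{itemize}
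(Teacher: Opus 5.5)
Your proposal is correct and follows essentially the same route as the paper. Both build an epimorphism $A_j\to\HH^*(R)$ whose kernel contains $(y_0^2,y_1^2,y_2^2)$, then give a normal-form spanning set of monomials modulo $I_j+(y_0^2,y_1^2,y_2^2)$ whose images match the additive basis of \cite{GK06}; the paper phrases this last step as linear independence of the images, which is equivalent to your dimension count. Your worry in the final paragraph is automatically settled: once $\bar\pi$ is well defined and surjective, the extra relations can only shrink the source, so only the spanning bound needs checking.
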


\begin{proof}
The classes of the cocycles listed above generate $\HH^*(R)$
and, by the product formula in Theorem~4.2
of~\cite{GK06}, satisfy the relations $I_j$.
Consequently, they define an epimorphism of graded
$K$-algebras
\[
\pi_j:A_j\twoheadrightarrow\HH^*(R),
\]
where $j=7$ for $1<s<\infty$ and $j=8$ for $s=\infty$.

Set $J=(y_0^2,y_1^2,y_2^2)\subseteq A_j$.
The equalities $y_i^2=0$ in $\HH^*(R)$ imply
\[
J\subseteq\ker\pi_j.
\]

To prove the reverse inclusion, we use
the following families of monomials.

\emph{Case $1<s<\infty$.}
The relations \cite[(2.22)--(2.25)]{GK06},
together with the equalities $y_i^2=0$ modulo $J$,
reduce every monomial to zero or to a scalar multiple
of one of the following monomials:
\[
\begin{gathered}
p^r p_0^{a_0}p_1^{a_1}p_2^{a_2}
y_0^{\varepsilon_0}y_1^{\varepsilon_1}y_2^{\varepsilon_2},
\qquad 0\le r<s,\\
t_i p_{i+1}^{a}p_{i+2}^{b}
y_{i+1}^{\varepsilon}y_{i+2}^{\eta},\\
h_i p_i^a y_i^\varepsilon,\qquad
z_i^{(m)}p_i^a y_i^\varepsilon,
\qquad 1\le m<s,\qquad h.
\end{gathered}
\]
Here $i=0,1,2$, and
$\varepsilon,\eta,\varepsilon_i\in\{0,1\}$.

\emph{Case $s=\infty$.}
The relations \cite[(2.26), (2.27)]{GK06},
together with the equalities $y_i^2=0$ modulo $J$,
reduce every monomial to zero or to a scalar multiple
of one of the following monomials:
\[
\begin{gathered}
p^r y_0^{\varepsilon_0}y_1^{\varepsilon_1}y_2^{\varepsilon_2},
\qquad h_i y_i^\varepsilon,\\
z_i^{(m)}y_i^\varepsilon,\qquad h,
\qquad r\ge0,\quad m\ge1,
\end{gathered}
\]
where $i=0,1,2$, and $\varepsilon,\varepsilon_i\in\{0,1\}$.

Now let $a\in\ker\pi_j$. Modulo $J$, we have
\[
a-\sum_{\nu=1}^{N}\lambda_\nu M_\nu\in J,
\qquad \lambda_\nu\in K,
\]
where $M_\nu$ are pairwise distinct monomials
from the corresponding family listed above.

By Theorem~4.2 and Propositions~3.1, 3.2
of~\cite{GK06}, the images $\pi_j(M_\nu)$
are linearly independent.
Since $\pi_j(a)=0$ and $\pi_j(J)=0$, we obtain
\[
0=\pi_j(a)
=\sum_{\nu=1}^{N}\lambda_\nu\pi_j(M_\nu).
\]
It follows that $\lambda_1=\cdots=\lambda_N=0$,
so $a\in J$. Hence
$\ker\pi_j\subseteq J$, and finally
\[
\ker\pi_j=(y_0^2,y_1^2,y_2^2).
\]
The required isomorphisms follow from the first
isomorphism theorem.
\end{proof}

We retain the notation for the chosen cohomology classes
throughout. All products are taken in $\HH^*(R)$
with the presentation of Theorem~\ref{thm:ring-presentation}.

\subsection{Multigrading}

Assign to the generators of the free algebra
$T=K\langle x_0,x_1,x_2\rangle$ the vectors
\[
\deg_x(x_0)=(1,0,0),\qquad
\deg_x(x_1)=(0,1,0),\qquad
\deg_x(x_2)=(0,0,1),
\]
and extend $\deg_x$ additively to words:
$\deg_x(uv)=\deg_x(u)+\deg_x(v)$.
Let $T_d$ denote the $K$-linear span of the words
of multidegree $d=(d_0,d_1,d_2)\in\mathbb N^3$;
then $T=\bigoplus_{d\in\mathbb N^3}T_d$.

The generators of the ideal $I$, namely the elements $x_i^2$ and
$x_{i+1}x_i+\rho x_ix_{i+1}$, each lie
in a single component $T_d$: the former has multidegree
$2e_i$ and the latter $e_i+e_{i+1}$.
Hence $I=\bigoplus_d(I\cap T_d)$,
and $R=T/I$ inherits an $\mathbb N^3$-grading
\[
R=\bigoplus_{d\in\mathbb N^3}R_d,
\qquad
R_d=\frac{T_d+I}{I}.
\]

\begin{definition}\label{def:multihomogeneous}
An element $r\in R$ is called \emph{multihomogeneous}
of multidegree $d$ if $r\in R_d$; in this case,
we write $\deg_x(r)=d$.
Equivalently, all monomials in the expansion of $r$ in the basis $B$
contain exactly $d_0$ letters $x_0$, $d_1$ letters $x_1$,
and $d_2$ letters $x_2$.
The zero element belongs to every component $R_d$.
\end{definition}

For the basis \(B\),
\[
\begin{array}{c|cccccccc}
        r&1&x_0&x_1&x_2&a&b&c&w\\ \hline
        \deg_x(r)&(0,0,0)&(1,0,0)&(0,1,0)&(0,0,1)
        &(1,1,0)&(0,1,1)&(1,0,1)&(1,1,1)
\end{array}
\]

\subsection{Euler derivations and the table of \(D_i\)}

The computation of the action of \(\Delta\) begins with the degree-one classes
\(y_0,y_1,y_2\), for which, as will be shown below,
\(\Delta(y_i)=1\). The resulting operators
\[
        D_i:=[y_i,-]
\]
admit a closed-form description in terms of the Euler derivations of the algebra
\(R\), which allows us to compute the table of \(D_i\) below.

\begin{definition}\label{def:euler-derivation}
For \(i=0,1,2\), the \emph{Euler derivation} \(\partial_i:R\to R\)
is defined by the conditions
\[
        \partial_i(x_i)=x_i,
        \qquad
        \partial_i(x_j)=0\quad(j\ne i),
\]
and the Leibniz rule. The defining relations of \(R\) are multihomogeneous,
so \(\partial_i\) is well defined. Let
\(\deg_{x_i}(r)\) denote the \(i\)-th coordinate of the multidegree \(\deg_x(r)\); then,
on a multihomogeneous element \(r\in R\),
\[
        \partial_i(r)=\deg_{x_i}(r)\,r.
\]
\end{definition}

\begin{lemma}\label{lem:Euler-revised}
The bar cochain \(y_i\circ\Psi_1\) coincides with the Euler
derivation \(\partial_i\). Let
\[
F=[r]_{\alpha,\beta,\gamma}
\in\operatorname{Hom}_\Lambda(Q_n,R),
\qquad n=\alpha+\beta+\gamma,
\]
be a cocycle, where $r$ is multihomogeneous. Then, in
\(\operatorname{HH}^*(R)\),
\[
        D_i\bigl([F]\bigr)
        =\bigl(\deg_{x_i}(r)+\alpha_i\bigr)\,[F],
        \qquad
        (\alpha_0,\alpha_1,\alpha_2)=(\alpha,\beta,\gamma),
\]
where the coefficient is regarded as an element of the field \(K\).
\end{lemma}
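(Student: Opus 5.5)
We first identify \(y_i\circ\Psi_1\). By Proposition~\ref{prop:psi1}, \(\Psi_1(1\otimes\overline\beta\otimes1)=t_0(\beta e_{000})\) for \(\beta\in B\setminus\{1\}\). Since \(y_0=[x_0]_{1,0,0}\) kills \(e_{010},e_{001}\) and sends \(e_{100}\mapsto x_0\), we obtain
\[
x_0\mapsto x_0,\quad x_1,x_2\mapsto0,\quad a\mapsto x_0x_1=a,\quad b\mapsto0,\quad c\mapsto x_2\cdot0+0+\dots\ \text{(from }e_{001}x_0\text{: }0\text{, from }x_2e_{100}\text{: }x_2x_0=c),\quad w\mapsto w.
\]
These are exactly \(\partial_0(\beta)=\deg_{x_0}(\beta)\beta\) on the basis. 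The same check applies to \(y_1,y_2\) using the remaining entries of Proposition~\ref{prop:psi1}. Since both \(y_i\circ\Psi_1\) and \(\partial_i\) are normalized \(1\)-cochains, that is \(K\)-linear maps \(\overline R\to R\) (the value on \(1\) is irrelevant), this proves the first claim.

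For the bracket we use the standard fact that for a derivation \(D\) (a 1-cocycle) and an \(n\)-cochain \(g\) on the bar complex,
\[
[D,g]=D\circ g-g\circ D=D\circ g-\sum_\ell g\circ_\ell D,
\]
with no signs in characteristic \(2\). Put \(g=F\circ\Psi_n\), a bar representative of \([F]\), and \(D=\partial_i\). Evaluate \([D,g]\) on a tensor \(u_1\otimes\cdots\otimes u_n\) with multihomogeneous \(u_\ell\in B\). The key point is that \(\Psi_n\) and \(F\) respect the multigrading, provided \(e_{\alpha\beta\gamma}\) is given internal multidegree \((\alpha,\beta,\gamma)\). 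Indeed the differential \(d\) and the homotopy \(t\) are multihomogeneous: each formula for \(t\) preserves total \(x\)-degree, counting \(e_{ijk}\) as degree \((i,j,k)\), as a direct inspection of the defining table shows. Hence \(g(u_1,\dots,u_n)\) is nonzero only if \(\sum\deg_x u_\ell=(\alpha,\beta,\gamma)+\deg_x(r)-(\text{degrees of outer coefficients})\). More precisely, \(g(u_1,\dots,u_n)\) is multihomogeneous of degree \(\sum_\ell\deg_x(u_\ell)-(\alpha,\beta,\gamma)+\deg_x(r)\).

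Therefore
\[
\partial_i(g(u))-\sum_\ell g(\dots,\partial_iu_\ell,\dots)=\Bigl(\sum\deg_{x_i}u_\ell-\alpha_i+\deg_{x_i}r-\sum\deg_{x_i}u_\ell\Bigr)g(u)=(\deg_{x_i}r-\alpha_i)\,g(u).
\]
In characteristic \(2\), \(-\alpha_i=\alpha_i\), so \([\partial_i,g]=(\deg_{x_i}r+\alpha_i)\,g\) already at the cochain level. Transferring back, \((\,\cdot\,)\circ\Phi_n\) together with \(\Psi_n\Phi_n=\id\) (Corollary~\ref{cor:Psi-Phi-identity}) gives \(D_i[F]=(\deg_{x_i}r+\alpha_i)[F]\).

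The main obstacle is justifying the multigrading claim for \(\Psi_n\), which requires checking that every term of every \(t_n\)-formula has matching multidegree. For example, \(t(ce_{ijk})\) contains \(x_1e_{i+1,j-1,k+1}\), of degree \((i+1,j,k+1)=\deg c+(i,j,k)\), which is consistent. This is a finite inspection, followed by induction via Lemma~\ref{lem:psi}.
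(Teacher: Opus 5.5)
Your proof is correct. The identification $y_i\circ\Psi_1=\partial_i$ is the same basis check the paper does, but for the eigenvalue identity you rely on a different key fact.

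\textbf{The paper's route.} It never needs multihomogeneity of $\Psi_n$. It evaluates $[\partial_i,F\circ\Psi_n]\circ\Phi_n$ only on the free generators $e_{pqs}$ and treats the two terms separately.
\begin{itemize}
\item The term $\partial_i(f(\dots))$ is handled by the strict identity $\Psi_n\Phi_n=\id$ (Corollary~\ref{cor:Psi-Phi-identity}).
\item The term $\sum_j f(\dots,\partial_i(u_j),\dots)$ is handled by the letter count from Proposition~\ref{prop:phi-paths}. Every tensor $T(\varepsilon)$ in $\Phi_n(e_{pqs})$ contains exactly $p_i$ letters $x_i$, so that sum equals $p_iF(e_{pqs})$.
\end{itemize}

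\textbf{Your route.} You show that $g=F\circ\Psi_n$ is a weight-homogeneous bar cochain. You do this by checking that every line of the defining table for $t_*$ is multihomogeneous, counting $e_{ijk}$ as degree $(i,j,k)$; it is. You then induct through Lemma~\ref{lem:psi}. This gives the cochain-level identity $[\partial_i,g]=(\deg_{x_i}r+\alpha_i)\,g$ as cochains on $\Bbar_n(R)$. You use $\Psi_n\Phi_n=\id$ only at the end, to identify $g\circ\Phi_n$ with $F$.

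\textbf{What each buys.} The paper's argument is shorter, given what is already proved about $\Phi_*$. Yours costs an extra finite inspection, but it gives two things:
\begin{itemize}
\item the general fact $[\partial_i,g]=\lambda_i g$ for every bar cochain $g$ of weight $\lambda$, which is what is invoked later in Lemma~\ref{lem:weights-revised}\textup{(v)};
\item a justification that $\Psi_*$ preserves the internal multidegree, which the paper uses in Lemma~\ref{lem:weights-revised} without proof.
\end{itemize}
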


\begin{proof}
Under the identification
\(\operatorname{Hom}_{\Lambda}(\Bbar_1(R),R)\cong
\operatorname{Hom}_K(\overline R,R)\), the cochain \(y_i\circ\Psi_1\)
is a \(K\)-linear map \(\overline R\to R\). Direct application of
the formulas of Proposition~\ref{prop:psi1} and the \(\Lambda\)-linearity of
\(y_i\) to the basis of \(\overline R\) gives \(y_i\Psi_1(\overline u)
=\partial_i(u)\) for all \(u\in B\setminus\{1\}\); for example, for
\(i=0\),
\[
        y_0\Psi_1(\overline a)=x_0x_1=a,
        \quad
        y_0\Psi_1(\overline b)=0,
        \quad
        y_0\Psi_1(\overline c)=x_2x_0=c,
        \quad
        y_0\Psi_1(\overline w)=x_0b=w,
\]
which agrees with \(\partial_0\). The cases \(i=1,2\) follow by cyclic permutation; thus
\(y_i\circ\Psi_1=\partial_i\).

Set \(f=F\circ\Psi_n\); this bar cochain represents the class \([F]\), and
the class \(y_i\) is represented by \(\partial_i\), so
\[
        D_i([F])
        =\bigl[[\partial_i,f]\circ\Phi_n\bigr].
\]
For the derivation \(\partial_i\) of degree \(1\), the Gerstenhaber bracket
with the bar cochain \(f\) of degree \(n\), when \(\Char K=2\), takes the form
\[
        [\partial_i,f](u_1,\ldots,u_n)
        =\partial_i\bigl(f(u_1,\ldots,u_n)\bigr)
        +\sum_{j=1}^{n}
        f(u_1,\ldots,\partial_i(u_j),\ldots,u_n).
\]

We compute \(G_i:=[\partial_i,f]\circ\Phi_n\) on \(e_{pqs}\),
\((p_0,p_1,p_2)=(p,q,s)\).

Since \(\Psi_n\Phi_n=\id\)
(Corollary~\ref{cor:Psi-Phi-identity}), we have
\(f\Phi_n(e_{pqs})=F(e_{pqs})\), so the first term gives
\(\partial_i(F(e_{pqs}))\).

By Proposition~\ref{prop:phi-paths},
each tensor in the expansion of $\Phi_n(e_{pqs})$
contains exactly $p_i$ factors $x_i$.
Since $\partial_i(x_j)=\delta_{ij}x_j$,
the second term is
$p_i f\bigl(\Phi_n(e_{pqs})\bigr)
=p_i F(e_{pqs})$. Consequently,
\[
        G_i(e_{pqs})=\partial_i\bigl(F(e_{pqs})\bigr)+p_i\,F(e_{pqs}).
\]

For \(F=[r]_{\alpha,\beta,\gamma}\), the right-hand side vanishes when
\((p,q,s)\ne(\alpha,\beta,\gamma)\), whereas for
\((p,q,s)=(\alpha,\beta,\gamma)\), using \(p_i=\alpha_i\) and
\(\partial_i(r)=\deg_{x_i}(r)\,r\), it equals
\(\bigl(\deg_{x_i}(r)+\alpha_i\bigr)r\). Thus,
\(G_i=\bigl(\deg_{x_i}(r)+\alpha_i\bigr)[r]_{\alpha,\beta,\gamma}\), and
passing to cohomology gives the required equality.
\end{proof}

Lemma~\ref{lem:Euler-revised} immediately gives
\begin{equation}\label{eq:Di-table-revised}
\begin{aligned}
D_i(h)&=h,
&\qquad
D_i(y_j)&=0,
&\qquad
D_i(p)&=p,\\
D_i(h_j)&=
\begin{cases}0,&j=i,\\ h_j,&j\ne i,\end{cases}
&\qquad
D_i(z_j^{(m)})&=
\begin{cases}m\,z_i^{(m)},&j=i,\\ z_j^{(m)},&j\ne i.\end{cases}
\end{aligned}
\end{equation}
For $1<s=\operatorname{ord}(\rho)<\infty$,
the order $s$ is odd, since $\Char K=2$.
Hence $s=1$ and $s-1=0$ in $K$, and in addition
\begin{equation}\label{eq:Di-finite-revised}
D_i(p_j)=
\begin{cases}p_j,&j=i,\\ 0,&j\ne i,\end{cases}
\qquad
D_i(t_j)=
\begin{cases}t_j,&j=i,\\ 0,&j\ne i.\end{cases}
\end{equation}

\subsection{Weights and selection rules}

The computation of \(\Delta\) on the generators relies on an additional
\(\mathbb Z^3\)-grading of the cochain space. Its role is as follows: \(\Delta\) and the Gerstenhaber bracket preserve this grading; if the target group contains no nonzero classes of the required weight, then the corresponding value is zero. This avoids direct computations in the bar complex and provides the main tool for the next two subsections.

\begin{definition}\label{def:weight-cochain-Q}
For a multihomogeneous \(r\in R\), the weight of the elementary cochain
\([r]_{\alpha,\beta,\gamma}\) is defined to be the vector
\[
        \operatorname{wt}\bigl([r]_{\alpha,\beta,\gamma}\bigr)
        =\deg_x(r)-(\alpha,\beta,\gamma)\in\mathbb Z^3.
\]
A cochain \(F\) of degree \(m\) is called \emph{homogeneous of weight}
\(\lambda\in\mathbb Z^3\) if
\[
        F(e_{\alpha,\beta,\gamma})\in R_{(\alpha,\beta,\gamma)+\lambda}
        \qquad\text{for all }\ \alpha+\beta+\gamma=m,
\]
where we set $R_d=0$ if at least one coordinate of $d$ is negative.
\end{definition}

Decomposing each value $F(e_{\alpha,\beta,\gamma})$ into
multihomogeneous components and then grouping the terms by
weight gives the weight decomposition
\begin{equation}\label{eq:weight-decomposition}
        \operatorname{Hom}_{\Lambda}(Q_m,R)
        =\bigoplus_{\lambda\in\mathbb Z^3}C^m_\lambda ,
\end{equation}
where $C^m_\lambda$ is the space of cochains of degree $m$ homogeneous
of weight $\lambda$.

By Lemma~\ref{lem:weights-revised}\textup{(i)}, the differential $\delta_Q$
preserves weight, so $C^{\bullet}_\lambda$ is a subcomplex of
$\operatorname{Hom}_\Lambda(Q_\bullet,R)$,
and~\eqref{eq:weight-decomposition} is a decomposition of complexes. Set
\[
        \HH^m_\lambda(R)=H^m\bigl(C^{\bullet}_\lambda\bigr);
\]
then
\begin{equation}\label{eq:HH-weight-decomposition}
        \HH^m(R)=\bigoplus_{\lambda\in\mathbb Z^3}\HH^m_\lambda(R),
\end{equation}
and every class decomposes uniquely into a finite sum of components homogeneous with respect to weight.

\begin{definition}\label{def:weight-cochain-Bar}
Let \(\lambda\in\mathbb Z^3\). A bar cochain
\(f\in\overline C^m(R)\) is called \emph{homogeneous of weight}
\(\lambda\) if, for any multihomogeneous arguments
\(u_1,\ldots,u_m\), either \(f(u_1,\ldots,u_m)=0\) or
\[
\deg_x f(u_1,\ldots,u_m)
=\deg_x(u_1)+\cdots+\deg_x(u_m)+\lambda.
\]
\end{definition}

Since the comparison morphisms $\Phi_*$ and $\Psi_*$
preserve weight, they induce isomorphisms between
the corresponding weight components of cohomology.

\begin{lemma}\label{lem:weights-revised}
Let \(f\) and \(g\) be cochains homogeneous with respect to weight. Then:
\begin{enumerate}[label=\textup{(\roman*)}]
\item \(\operatorname{wt}(\delta_Q f)=\operatorname{wt}(f)\);
\item \(\operatorname{wt}(f\smile g)=\operatorname{wt}(f)+\operatorname{wt}(g)\);
\item \(\operatorname{wt}([f,g])=\operatorname{wt}(f)+\operatorname{wt}(g)\);
\item \(\operatorname{wt}(\Delta f)=\operatorname{wt}(f)\);
\item $\Delta D_i=D_i\Delta$ on $\HH^*(R)$
for $i=0,1,2$.
\end{enumerate}
\end{lemma}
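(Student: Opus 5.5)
The plan is to introduce an $\mathbb Z^3$-grading on every object in sight and check that each map involved is homogeneous. Give the free $\Lambda$-module $Q_n$ the grading $\deg_x(u\,e_{ijk}\,v)=\deg_x(u)+\deg_x(v)+(i,j,k)$ for multihomogeneous $u,v$. Give $\Bbar_n(R)$ the grading $\deg_x(u_0\otimes\overline u_1\otimes\cdots\otimes\overline u_n\otimes u_{n+1})=\sum\deg_x(u_\ell)$. With these conventions, a $Q$-cochain (respectively a bar cochain) is homogeneous of weight $\lambda$ exactly when it raises $\deg_x$ by $\lambda$, in the sense of Definitions~\ref{def:weight-cochain-Q} and~\ref{def:weight-cochain-Bar}.

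For (i), I will read off the formula \eqref{eq:cochain-differential-revised}. Each summand multiplies $r$ by $x_\ell$ and simultaneously raises the $\ell$-th index by one. Hence $\deg_x$ of the value and the index vector both increase by $u_\ell$, and the weight is unchanged.

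Next I will show that $\Phi_*$ and $\Psi_*$ preserve $\deg_x$. For $\Phi_*$ this is immediate from Proposition~\ref{prop:phi-paths}: $\Phi_n(e_{ijk})$ is a combination of $T(\varepsilon)$ with $\varepsilon\in W(i,j,k)$, and each such tensor has degree $(i,j,k)$. For $\Psi_*$, by \eqref{eq:Psi-recursion} it suffices to check that each $t_n$ preserves $\deg_x$, i.e.\ that every term of $t_n(\beta e_{ijk})$ has degree $\deg_x\beta+(i,j,k)$. This is a line-by-line inspection of the defining formulas. For example, $t_n(c\,e_{ijk})$ contains $x_1e_{i+1,j-1,k+1}$, of degree $(i+1,j,k+1)=\deg_x(c)+(i,j,k)$. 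I expect this bookkeeping, together with the check that the transfer formulas compose correctly, to be the only tedious step.

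Consequently $F\mapsto F\circ\Psi_n$ and $f\mapsto f\circ\Phi_n$ send weight-$\lambda$ cochains to weight-$\lambda$ cochains. Products, brackets and $\Delta$ on the $Q$-side are computed by transporting to the bar side and back, so (ii)--(iv) reduce to bar-level statements. On the bar side, (ii) is obvious because $\deg_x$ is additive under multiplication. For (iii), a partial insertion $f\circ_i g$ evaluated on multihomogeneous arguments has degree $\sum\deg_x(u_\ell)+\operatorname{wt}g+\operatorname{wt}f$, and the same holds for the whole bracket.

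For (iv), I will use Tradler's formula \eqref{eq:Tradler-R} and the observation that $\tau$ is supported in degree $(1,1,1)$. Thus $\langle u,v\rangle\ne0$ forces $\deg_x u+\deg_x v=(1,1,1)$. Now take $f$ of weight $\lambda$. A right-hand term $\tau\bigl(f(\text{cyclic permutation of }u_1,\dots,u_n)\bigr)$ is nonzero only if $\sum_\ell\deg_x(u_\ell)+\lambda=(1,1,1)$. Expanding $\Delta(f)(u_1,\dots,u_{n-1})$ via \eqref{eq:dual-expansion-revised}, only components of degree $\sum_{\ell<n}\deg_x(u_\ell)+\lambda$ can survive.

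For (v), my plan is to prove a weight version of Lemma~\ref{lem:Euler-revised}: $D_i$ acts on $\HH^m_\lambda(R)$ as multiplication by $\lambda_i$ (read in $K$). The proof of Lemma~\ref{lem:Euler-revised} applies verbatim to any cocycle $F$ of weight $\lambda$, not just an elementary one. It gives $G_i(e_{pqs})=\partial_i(F(e_{pqs}))+p_iF(e_{pqs})$. Here $F(e_{pqs})$ has $x_i$-degree $p_i+\lambda_i$, so the coefficient is $p_i+\lambda_i+p_i=\lambda_i$ in characteristic~$2$. Then $D_i$ is scalar on each summand of \eqref{eq:HH-weight-decomposition}, and by (iv) $\Delta$ maps $\HH^m_\lambda$ into $\HH^{m-1}_\lambda$, so $\Delta D_i=\lambda_i\Delta=D_i\Delta$ on each component and hence on all of $\HH^*(R)$.
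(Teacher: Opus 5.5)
Your proposal is correct and follows essentially the same route as the paper. You read off (i) from the cochain differential, transport to the bar side for (ii)--(iv) using that $\Phi_*$ and $\Psi_*$ preserve the multidegree, use the support of $\tau$ and the dual basis for (iv), and get (v) from the Euler-derivation computation $D_i=\lambda_i$ on $\HH^m_\lambda(R)$ together with (iv). Your explicit check that each $t_n$ (hence $\Psi_*$) and $\Phi_*$ preserve $\deg_x$, and your application of the Euler computation to arbitrary weight-$\lambda$ cocycles rather than only elementary ones, supply details the paper asserts without proof.
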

\begin{proof}

\emph{Part \textup{(i)}.}
Let $f\in C^m_\lambda$.
By the definition of weight,
\[
f(e_{\alpha,\beta,\gamma})
\in R_{(\alpha,\beta,\gamma)+\lambda},
\qquad \alpha+\beta+\gamma=m.
\]
Since $x_i\in R_{e_i}$,
formula~\eqref{eq:cochain-differential-revised}
gives
\[
(\delta_Q f)(e_{\alpha,\beta,\gamma})
\in R_{(\alpha,\beta,\gamma)+\lambda},
\qquad \alpha+\beta+\gamma=m+1.
\]
Hence
$\delta_Q(C^m_\lambda)\subseteq C^{m+1}_\lambda$.

\medskip
\emph{Part \textup{(ii)}.}
Let
\[
f\in C^m_\lambda,
\qquad
g\in C^n_\mu.
\]
Pass to the bar cochains $f\circ\Psi_m$ and $g\circ\Psi_n$,
denoting them by the same letters in this computation.
Since $\Psi_*$ preserves the internal multidegree,
their weights are $\lambda$ and $\mu$, respectively.

Let $u_1,\ldots,u_{m+n}\in\overline R$
be multihomogeneous with total multidegree $d$.
Set
\[
d'=\deg_x(u_1)+\cdots+\deg_x(u_m),
\qquad
d''=\deg_x(u_{m+1})+\cdots+\deg_x(u_{m+n}).
\]
Then $d'+d''=d$.
By the definition of the cup product,
\[
(f\smile g)(u_1,\ldots,u_{m+n})
=f(u_1,\ldots,u_m)\,g(u_{m+1},\ldots,u_{m+n}).
\]
Since
\[
f(u_1,\ldots,u_m)\in R_{d'+\lambda},
\qquad
g(u_{m+1},\ldots,u_{m+n})\in R_{d''+\mu},
\]
we obtain
\[
(f\smile g)(u_1,\ldots,u_{m+n})
\in R_{d'+\lambda}R_{d''+\mu}
\subseteq R_{d+\lambda+\mu}.
\]
Hence the bar cochain $f\smile g$ is homogeneous
of weight $\lambda+\mu$. Since $\Phi_*$
preserves the internal multidegree,
\[
(f\smile g)\circ\Phi_{m+n}
\in C^{m+n}_{\lambda+\mu}.
\]
Thus the transferred cup product
has weight $\lambda+\mu$.

\medskip
\emph{Part \textup{(iii)}.}
Let
\[
f\in C^m_\lambda,
\qquad
g\in C^n_\mu.
\]
If $m=n=0$, the bracket is zero.
Henceforth assume that $m+n\ge1$.

Pass to the bar cochains $f\circ\Psi_m$ and $g\circ\Psi_n$,
denoting them by the same letters in this computation.

First suppose that $m,n\ge1$.
For $1\le q\le m$, the partial insertion is given by
\[
(f\circ_q g)(u_1,\ldots,u_{m+n-1})
=f\bigl(u_1,\ldots,u_{q-1},g(u_q,\ldots,u_{q+n-1}),u_{q+n},\ldots,u_{m+n-1}\bigr).
\]
Let $u_1,\ldots,u_{m+n-1}\in\overline R$
be multihomogeneous with total multidegree $d$. Set
\[
d'=\deg_x(u_q)+\cdots+\deg_x(u_{q+n-1}).
\]
By the homogeneity of $g$,
\[
g(u_q,\ldots,u_{q+n-1})\in R_{d'+\mu},
\]
so
\[
(f\circ_q g)(u_1,\ldots,u_{m+n-1})
\in R_{(d-d')+(d'+\mu)+\lambda}
=R_{d+\lambda+\mu}.
\]
Hence each insertion $f\circ_q g$
is homogeneous of weight $\lambda+\mu$.
The same holds for the insertions $g\circ_q f$.

If $n=0<m$, the bar cochain $g$ is identified
with an element of $R_\mu$, and the same computation
applies with $d'=0$.
The case $m=0<n$ is analogous.
Thus the bracket
\[
[f,g]=f\circ g+g\circ f,
\qquad
f\circ g=\sum_{q=1}^{m}f\circ_q g,
\]
is homogeneous of weight $\lambda+\mu$.
Since $\Phi_*$ preserves the internal multidegree,
\[
[f,g]\circ\Phi_{m+n-1}\in C^{m+n-1}_{\lambda+\mu},
\]
which proves the assertion.

\medskip
\emph{Part \textup{(iv)}.}
Let $f\in C^m_\lambda$, $m\ge1$.
As in part~\textup{(ii)}, pass to the bar cochain
$f\circ\Psi_m$, denoting it in this computation
by the same letter.

Let $u_1,\ldots,u_{m-1}\in\overline R$
be multihomogeneous. Set
\[
d=\deg_x(u_1)+\cdots+\deg_x(u_{m-1}).
\]
For any $v\in B$, Tradler's formula gives
\[
\bigl\langle\Delta(f)(u_1,\ldots,u_{m-1}),v\bigr\rangle
=\sum_{r=1}^{m}
\bigl\langle
f(u_r,\ldots,u_{m-1},v,u_1,\ldots,u_{r-1}),1
\bigr\rangle.
\]
Since $f$ has weight $\lambda$ and
$\langle z,1\rangle=\tau(z)$ vanishes
on all multihomogeneous components except
$R_{(1,1,1)}$, the right-hand side can be nonzero
only if
\[
d+\deg_x(v)+\lambda=(1,1,1).
\]
On the other hand, for the dual basis,
\[
\deg_x(v^\vee)=(1,1,1)-\deg_x(v).
\]
Hence, by formula~\eqref{eq:dual-expansion-revised},
\[
\Delta(f)(u_1,\ldots,u_{m-1})
=\sum_{v\in B}
\bigl\langle\Delta(f)(u_1,\ldots,u_{m-1}),v\bigr\rangle
v^\vee
\in R_{d+\lambda}.
\]
Thus the bar cochain $\Delta(f)$ is homogeneous
of weight $\lambda$. Since $\Phi_*$ preserves
the internal multidegree,
\[
\Delta(f)\circ\Phi_{m-1}\in C^{m-1}_\lambda.
\]
For $m=0$, this holds since $\Delta=0$ by definition.

\medskip
\emph{Part \textup{(v)}.}
Let $u\in\HH^m_\lambda(R)$, where
$\lambda=(\lambda_0,\lambda_1,\lambda_2)$.

The proof of Lemma~\ref{lem:Euler-revised} gives
\[
D_i(u)=\lambda_i u,\qquad i=0,1,2.
\]

Since, by part~\textup{(iv)},
the operator $\Delta$ preserves weight, we have
\[
D_i(\Delta u)=\lambda_i\Delta u.
\]
Hence
\[
\Delta D_i(u)
=\Delta(\lambda_i u)
=\lambda_i\Delta u
=D_i(\Delta u).
\]

By the decomposition
\[
\HH^m(R)=\bigoplus_{\lambda\in\mathbb Z^3}
\HH^m_\lambda(R)
\]
and the linearity of $D_i$ and $\Delta$,
the resulting equality holds for all
$u\in\HH^m(R)$. Thus
\[
\Delta D_i=D_i\Delta,
\qquad i=0,1,2.
\]

\end{proof}

\begin{corollary}[Selection rule]\label{cor:selection-revised}
Let $u,v$ be classes homogeneous with respect to weight. If
$\HH^{|u|-1}_{\operatorname{wt}(u)}(R)=0$, then $\Delta(u)=0$; if
$\HH^{|u|+|v|-1}_{\operatorname{wt}(u)+\operatorname{wt}(v)}(R)=0$, then
$[u,v]=0$.
\end{corollary}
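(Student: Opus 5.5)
The plan is to derive the corollary directly from Lemma~\ref{lem:weights-revised} together with the weight decompositions \eqref{eq:weight-decomposition} and \eqref{eq:HH-weight-decomposition}. We first choose representatives. If $u$ is homogeneous of weight $\lambda=\operatorname{wt}(u)$, then $u\in\HH^{|u|}_\lambda(R)=H^{|u|}(C^\bullet_\lambda)$. Hence $u$ is represented by a cocycle $F\in C^{|u|}_\lambda$, where we take the weight component of an arbitrary representative; this component is again a cocycle because $\delta_Q$ preserves weight by part~(i). In the same way, $v$ is represented by $G\in C^{|v|}_\mu$ with $\mu=\operatorname{wt}(v)$.

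For the statement about $\Delta$, we apply part~(iv). It gives $\Delta(F)=\Delta(F\circ\Psi_{|u|})\circ\Phi_{|u|-1}\in C^{|u|-1}_\lambda$, and this is a cocycle because $\Delta$ is defined on cohomology via the transfer~\eqref{eq:Delta-transfer-revised}. Its class therefore lies in $\HH^{|u|-1}_\lambda(R)$, and this group is zero by hypothesis, so $\Delta(u)=0$. For $|u|=0$ there is nothing to prove, since $\Delta=0$ by definition.

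For the bracket, we use part~(iii) in the same way. The transferred bracket $[F\circ\Psi,G\circ\Psi]\circ\Phi_{|u|+|v|-1}$ is a cocycle in $C^{|u|+|v|-1}_{\lambda+\mu}$ that represents $[u,v]$. Its class lies in $\HH^{|u|+|v|-1}_{\lambda+\mu}(R)=0$, so $[u,v]=0$.

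The only delicate step is to make sure that the cohomology class computed in the weighted subcomplex agrees with the class in $\HH^*(R)$. In other words, we need the map $\HH^m_\lambda(R)\to\HH^m(R)$ to be the inclusion of a direct summand, so that vanishing in the summand gives vanishing in $\HH^*(R)$. This follows because \eqref{eq:weight-decomposition} is a decomposition of complexes, which is part~(i). A related point is that the transfer through $\Psi_*$ and $\Phi_*$ respects weights, since both comparison morphisms preserve multidegree: $\Phi_n(e_{ijk})$ is a sum of words with $i,j,k$ letters by Proposition~\ref{prop:phi-paths}, and $\Psi_*$ is weight-homogeneous by construction. This is exactly what was used in the proof of Lemma~\ref{lem:weights-revised}.
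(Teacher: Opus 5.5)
Your proof is correct and follows the route the paper intends: the corollary is stated there without a separate proof, as an immediate consequence of Lemma~\ref{lem:weights-revised}(iii),(iv) and the weight decomposition~\eqref{eq:HH-weight-decomposition}, which is exactly the argument you give. One small correction to your last paragraph: the direct-summand property is needed only to choose a weight-$\lambda$ representative of $u$ (which you do), not to pass vanishing from $\HH^m_\lambda(R)$ to $\HH^m(R)$, since any linear map sends $0$ to $0$.
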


To apply the selection rule, we use the additive basis given above.

\subsection{Values of \(\Delta\) on the generators}

The values of $\Delta$ on the generators fall into three cases. The classes
$h$, $h_i$ have degree $0$, and $\Delta$ lowers degree, so
$\Delta(h)=\Delta(h_i)=0$. The classes $y_i$ have degree $1$ and weight $0$,
with $\HH^0_0(R)\ne0$, so the selection rule does not apply;
a direct computation gives $\Delta(y_i)=1$. For every other
generator $u$, we have $\HH^{|u|-1}_{\operatorname{wt}(u)}(R)=0$, and
hence $\Delta(u)=0$ by Corollary~\ref{cor:selection-revised}.

\begin{proposition}\label{prop:Delta-generators-revised}
On the generators of the algebra \(\operatorname{HH}^*(R)\), we have
\[
        \Delta(y_i)=1,
        \qquad
        \Delta(h)=\Delta(h_i)=\Delta(p)=\Delta(z_i^{(m)})=0,
\]
and, when \(1<s=\operatorname{ord}(\rho)<\infty\), additionally
\[
        \Delta(p_i)=\Delta(t_i)=0.
\]
\end{proposition}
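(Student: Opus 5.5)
The proof splits along the three cases listed just before the statement. For $h$ and $h_i$ nothing needs to be done: they lie in $\HH^0(R)$, and $\Delta=0$ there by definition. For $y_i$ we compute directly with the transferred operator~\eqref{eq:Delta-transfer-revised}. By Lemma~\ref{lem:Euler-revised}, $y_i\circ\Psi_1=\partial_i$, so $\Delta(y_i)=\Delta(\partial_i)\circ\Phi_0$, and $\Phi_0(e_{000})=1\otimes1$. Tradler's formula~\eqref{eq:Tradler-R} with $n=1$ gives
\[
\langle\Delta(\partial_i),u\rangle=\langle\partial_i(u),1\rangle=\tau(\partial_i(u)).
\]
By Definition~\ref{def:euler-derivation}, $\tau(\partial_i(u))$ is nonzero only for $u=w$, where it equals $\deg_{x_i}(w)\,\tau(w)=1$. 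Expanding in the dual basis via~\eqref{eq:dual-expansion-revised} then gives $\Delta(\partial_i)=w^\vee=1$. Hence $\Delta(y_i)=[1]_{000}=1$. One should also check that the normalization is harmless: $u$ ranges over $\overline R$, and $\partial_i(1)=0$, so no correction is needed.

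For all remaining generators we use the selection rule, Corollary~\ref{cor:selection-revised}. First we compute the weights from Definition~\ref{def:weight-cochain-Q}:
\[
\operatorname{wt}(p)=(-1,-1,-1),\qquad \operatorname{wt}(z_0^{(m)})=(-m,1,1)\ \text{(and cyclically)},
\]
\[
\operatorname{wt}(p_0)=(-s,0,0)\ \text{(and cyclically)},\qquad \operatorname{wt}(t_0)=(1,1-s,1-s)\ \text{(and cyclically)}.
\]
Then, for each generator $u$, we show $\HH^{|u|-1}_{\operatorname{wt}(u)}(R)=0$. The natural route is not to use the additive basis but to look directly at the cochain level. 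A degree-$(|u|-1)$ cochain of weight $\lambda$ sends $e_{\alpha\beta\gamma}$ into $R_{(\alpha,\beta,\gamma)+\lambda}$, and this is nonzero only if $(\alpha,\beta,\gamma)+\lambda\in\{0,1\}^3$. That is a strong constraint. For $p$: $\alpha+\beta+\gamma=2$ together with $(\alpha,\beta,\gamma)-(1,1,1)\in\{0,1\}^3$ forces $\alpha,\beta,\gamma\ge1$, which is impossible, so $C^2_{(-1,-1,-1)}=0$. For $z_0^{(m)}$ in degree $m-1$: we need $\alpha\ge m$, contradicting $\alpha\le m-1$, so again $C^{m-1}_\lambda=0$. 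The same argument disposes of $p_i$, since degree $s-1$ would require $\alpha\ge s$. For $t_0$ in degree $2s-2$, we need $\beta,\gamma\ge s-1$, which forces $\alpha=0$ and $\beta=\gamma=s-1$; the value then lies in $R_{(1,0,0)}$. So $C^{2s-2}_\lambda$ is one-dimensional, spanned by $[x_0]_{0,s-1,s-1}$, and we must show that this cochain is not a cocycle, or that it is a coboundary.

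This last case, $t_i$, is the only one where the weight space of cochains is nonzero, and I expect it to be the main obstacle. I would handle it with~\eqref{eq:cochain-differential-revised}. The $d'$-component of $\delta_Q([x_0]_{0,s-1,s-1})$ vanishes because $x_0^2=0$. The $d''$-component is $\rho^{s-1}x_1x_0+x_0x_1=(\rho^{s}+1)a$, which is zero since $\rho^s=1$. The $d'''$-component is likewise $x_2x_0+\rho^{s-1}x_0x_2$, which is also zero. So this cochain is a cocycle, and the cheap argument does not work.

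Instead, I would compare with $C^{2s-3}_\lambda$. There we need $\beta,\gamma\ge s-1$ with sum at most $2s-3$, which is impossible, so $C^{2s-3}_\lambda=0$ and there are no coboundaries either. Thus $\HH^{2s-2}_\lambda$ is one-dimensional, and the selection rule fails for $t_i$. The fallback is a direct computation: transfer $\Delta$ through $\Psi_{2s-1}$ and $\Phi_{2s-2}$ and determine the coefficient of $[x_0]_{0,s-1,s-1}$. That coefficient equals
\[
\tau\Bigl(\sum_r t_0\Psi\bigl(\text{cyclic rotations of }\Phi_{2s-2}(e_{0,s-1,s-1})\text{ with }v=b\text{ inserted}\bigr)\Bigr).
\]
By Proposition~\ref{prop:sparse-Psi}, only rotations that produce the distinguished word $\varepsilon^*_{0,s-1,s}=(2^s,1^{s-1})$ survive, and these must be counted with their weights from Proposition~\ref{prop:phi-paths}. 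I expect the surviving terms to cancel in pairs in characteristic $2$, or to carry the factor $s-1=0$ in $K$. If that fails, the alternative is the identity $\Delta D_i=D_i\Delta$ from Lemma~\ref{lem:weights-revised}(v) combined with the BV identity, though for $t_i$ I would rely mainly on the direct count.
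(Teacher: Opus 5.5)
Your treatment of $h,h_i$, of $y_i$, and of $p$, $z_i^{(m)}$, $p_i$ is correct and is essentially the paper's argument. For $y_i$ you use $\tau\circ\partial_i=\tau$ and the dual basis. For the others you show $C^{|u|-1}_{\operatorname{wt}(u)}=0$ and apply Corollary~\ref{cor:selection-revised}; your coordinatewise test $(\alpha,\beta,\gamma)+\lambda\in\{0,1\}^3$ is an equivalent variant of the paper's total-degree count.

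The case $t_i$, however, is not proved as written, because of a degree slip. Since $|t_0|=2s-2$, the class $\Delta(t_0)$ lies in $\HH^{2s-3}_{\lambda}(R)$ with $\lambda=(1,1-s,1-s)$, not in degree $2s-2$. The space $C^{2s-2}_\lambda$ that you examined is spanned by $t_0=[x_0]_{0,s-1,s-1}$ itself. It is therefore unsurprising that it contains a nonzero cocycle, and this has no bearing on $\Delta(t_0)$.

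The relevant computation is the one you made only as a side remark. In degree $2s-3$ one needs $\beta,\gamma\ge s-1$, hence $\alpha+\beta+\gamma\ge 2s-2>2s-3$. So $C^{2s-3}_\lambda=0$, $\HH^{2s-3}_\lambda(R)=0$, and the selection rule gives $\Delta(t_i)=0$ immediately. This is the paper's case (c), where the same vanishing is phrased as ``the total $x$-degree of $r$ would be $0$, forcing $\alpha=-1$''.

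Instead, your proposal concludes that the selection rule fails for $t_i$ and defers to a direct count. That count is never carried out, and its outcome is only conjectured (cancellation in pairs, or a factor $s-1$). It is also set up in the wrong degrees: the transfer of $\Delta(t_0)$ uses $\Psi_{2s-2}$ and $\Phi_{2s-3}$, and the result has degree $2s-3$, so it has no component on $[x_0]_{0,s-1,s-1}$ at all. Replace that paragraph by the one-line selection-rule argument above.
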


\begin{proof}
\emph{Classes of degree \(0\).} The operator \(\Delta\) lowers degree,
so \(\Delta=0\) on \(\operatorname{HH}^0(R)\); in particular,
\(\Delta(h)=\Delta(h_i)=0\).

\medskip
\emph{The values $\Delta(y_i)$.}
By Lemma~\ref{lem:Euler-revised}, the class $y_i$ is represented
by the Euler derivation $\partial_i$.
Since $\partial_i(w)=w$ and
$\tau(\partial_i(u))=0$ for $u\in B\setminus\{w\}$,
we have $\tau\circ\partial_i=\tau$.
By Tradler's formula, for any $u\in R$,
\[
\langle\Delta(y_i),u\rangle
=\tau(\partial_i(u))
=\tau(u)
=\langle1,u\rangle.
\]
The nondegeneracy of the form gives
$\Delta(y_i)=1$ for $i=0,1,2$.
    
\medskip
\emph{Vanishing by the selection rule.} The class $\Delta(u)$ belongs to $\HH^{|u|-1}(R)$ and, by
Lemma~\ref{lem:weights-revised}\textup{(iv)}, has weight
$\operatorname{wt}(u)$. For $\lambda=(\lambda_0,\lambda_1,\lambda_2)\in\mathbb Z^3$,
set $|\lambda|=\lambda_0+\lambda_1+\lambda_2$.
The space $C^n_\lambda$ is spanned by
the elementary cochains $[r]_{\alpha,\beta,\gamma}$ with
$\alpha+\beta+\gamma=n$ and multihomogeneous $r$ satisfying
\[
        \deg_x(r)=\lambda+(\alpha,\beta,\gamma),
        \qquad
        |{\deg_x(r)}|=|\lambda|+n .
\]
Since $R_{d}\ne0$ only for $d\in\{0,1\}^3$, all
coordinates of $\deg_x(r)$ and $(\alpha,\beta,\gamma)$ are nonnegative. If,
for given $\lambda$ and $n$, no such $r$ and $(\alpha,\beta,\gamma)$
exist, then $C^n_\lambda=0$, and hence
$\HH^n_\lambda(R)=0$, which gives $\Delta(u)=0$ by
Corollary~\ref{cor:selection-revised}.

We verify this for the remaining
generators; by cyclic symmetry, it suffices to consider \(i=0\).
\[
\begin{array}{c|c|c}
        u & |u|-1 & \operatorname{wt}(u)\\ \hline
        p        & 2      & (-1,-1,-1)\\
        z_0^{(m)}& m-1    & (-m,1,1)\\
        p_0      & s-1    & (-s,0,0)\\
        t_0      & 2s-3   & (1,1-s,1-s)
\end{array}
\]
Consider the following cases, verifying in each
that there are no elementary cochains
$[r]_{\alpha,\beta,\gamma}$ of the required degree and weight.

\begin{enumerate}[label=\textup{(\alph*)}]
\item \emph{The classes \(p\) and \(p_0\).} The total \(x\)-degree of a possible
\(r\) is
\[
        |{\deg_x(r)}|=|{\operatorname{wt}(u)}|+(|u|-1),
\]
which gives, for \(p\), \((-3)+2=-1\), and for \(p_0\), \((-s)+(s-1)=-1\).
A negative total degree is impossible.

\item \emph{The classes \(z_0^{(m)}\).} The total \(x\)-degree is
\((2-m)+(m-1)=1\), but the weight coordinates corresponding to \(x_1\) and \(x_2\)
require \(\deg_{x_1}(r)\ge1\) and \(\deg_{x_2}(r)\ge1\), which
is incompatible with total degree \(1\).

\item \emph{The classes \(t_0\).} The total \(x\)-degree is
\((3-2s)+(2s-3)=0\), that is, \(r\) is a scalar; the first
weight coordinate then gives the index
\(\alpha=\deg_{x_0}(r)-\operatorname{wt}(t_0)_0=0-1=-1<0\).
\end{enumerate}

In every case, the required weight component of
\(\operatorname{HH}^{|u|-1}(R)\) is zero, so Corollary~\ref{cor:selection-revised}
gives
\[
        \Delta(p)=\Delta(z_i^{(m)})=\Delta(p_i)=\Delta(t_i)=0.
        \qedhere
\]
\end{proof}

\subsection{Completeness of the Gerstenhaber bracket table}

The computed values of \(\Delta\) on the generators allow us to determine the Gerstenhaber
bracket on all pairs of generators. The bracket is expressed in terms of
\(\Delta\) by the formula
\begin{equation}\label{eq:B-revised}
        [u,v]=\Delta(uv)+\Delta(u)\,v+u\,\Delta(v),
\end{equation}
which holds in characteristic \(2\) for homogeneous classes \(u,v\). In
particular, for every homogeneous \(g\),
\[
        [g,g]=g\circ g+g\circ g=0.
\]
Thus, to describe the bracket completely, it suffices to compute
\([u,v]\) on unordered pairs of distinct generators.

To write the table compactly, introduce the notation
\[
        Y=\rho^{-2}(y_0y_1+y_0y_2+y_1y_2),
\]
and, in the case of finite order \(1<s<\infty\), additionally
\[
        Z_i=z_i^{(s-1)},
        \qquad
        U_i=\rho^{-2}(y_{i+1}+y_{i+2})p_i,
\]
\[
        V_i=\rho^{-2}p_{i+1}p_{i+2},
        \qquad
        W=\rho^{-2}p^{s-1}.
\]

The selection of potentially nonzero pairs relies on the selection
rule~\ref{cor:selection-revised}, so we need the degrees and weights of
the generators. Set $\mathbf{1}=(1,1,1)$. Then
\begin{equation}\label{eq:generator-weights-revised}
\begin{array}{c|c|c}
        g & |g| & \operatorname{wt}(g)\\ \hline
        h        & 0     & \mathbf1\\
        h_i      & 0     & \mathbf1-e_i\\
        y_i      & 1     & 0\\
        p        & 3     & -\mathbf1\\
        z_i^{(m)}& m     & \mathbf1-(m+1)e_i\\
        p_i      & s     & -s\,e_i\\
        t_i      & 2s-2  & e_i-(s-1)(\mathbf1-e_i)
\end{array}
\end{equation}
In particular, for \(Z_i=z_i^{(s-1)}\),
\[
        |Z_i|=s-1,
        \qquad
        \operatorname{wt}(Z_i)=\mathbf1-s\,e_i.
\]

There are many pairs of distinct generators, but almost all of them
are eliminated by three simple observations:
pairs involving $y_i$ have already been described by the operators $D_i$;
among the other pairs, a zero product in $\HH^*(R)$
implies a zero bracket; the remaining pairs are checked using
the selection rule~\ref{cor:selection-revised}.

This leaves the short list given below.

\begin{lemma}\label{lem:candidate-pairs-revised}
Let \(u,v\) be distinct generators other than \(y_0,y_1,y_2\).
Then \([u,v]=0\), except for the following types of pairs, for which
the unique possible target component is indicated:
\[
\begin{array}{c|c}
        \text{pair}&\text{target component}\\ \hline
        (h,p)&\langle y_0y_1,y_0y_2,y_1y_2\rangle_K\\
        (h,p_i)&KZ_i\\
        (p,Z_i)&\langle y_0p_i,y_1p_i,y_2p_i\rangle_K\\
        (p,t_i)&Kp_{i+1}p_{i+2}\\
        (p_i,Z_{i+1})&Kt_{i+2}\\
        (p_i,Z_{i+2})&Kt_{i+1}\\
        (p_i,t_i)&Kp^{s-1}\\
        (t_i,Z_i)&\langle y_0y_1,y_0y_2,y_1y_2\rangle_K\,p^{s-2}\\
        (t_{i+1},t_{i+2})&\langle y_0,y_1,y_2\rangle_K\,p_i\,p^{s-2}
\end{array}
\]
All rows except the first concern the case of finite order
\(1<s<\infty\).
\end{lemma}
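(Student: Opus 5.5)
Fix an unordered pair $\{u,v\}$ of distinct generators from $\{h,h_i,p,z_i^{(m)},p_i,t_i\}$. There are three sources of vanishing, and I will use them in order. First, if both $u$ and $v$ have degree $0$, the bracket lands in degree $-1$ and is zero. Next, I use formula~\eqref{eq:B-revised}: $[u,v]=\Delta(uv)+\Delta(u)v+u\Delta(v)$. By Proposition~\ref{prop:Delta-generators-revised}, $\Delta$ vanishes on every generator other than $y_0,y_1,y_2$, so for our pairs $[u,v]=\Delta(uv)$. Consequently, whenever $uv=0$ in the presentation of Theorem~\ref{thm:ring-presentation} (relations $I_j$ of~\cite{GK06}), we get $[u,v]=0$. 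This eliminates, for instance, most products of $h,h_i$ with positive-degree generators and products such as $z_i^{(m)}z_j^{(m')}$, $h_iz_j^{(m)}$, $z_i^{(m)}t_j$. For every remaining pair, I apply Corollary~\ref{cor:selection-revised}: compute $N=|u|+|v|-1$ and $\lambda=\operatorname{wt}(u)+\operatorname{wt}(v)$ from table~\eqref{eq:generator-weights-revised}, and check that $\HH^N_\lambda(R)=0$.

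To check vanishing of $\HH^N_\lambda(R)$ efficiently, I reuse the counting argument from the proof of Proposition~\ref{prop:Delta-generators-revised}. An elementary cochain $[r]_{\alpha,\beta,\gamma}$ in $C^N_\lambda$ needs $\deg_x(r)=\lambda+(\alpha,\beta,\gamma)\in\{0,1\}^3$ and $\alpha+\beta+\gamma=N$. Hence $0\le|\lambda|+N\le3$, and each coordinate must satisfy $\lambda_i\le 1$. If these fail, then $C^N_\lambda=0$. When they hold, I read off which basis classes of~\cite[Props.~3.1,~3.2]{GK06} (the monomials listed in the proof of Theorem~\ref{thm:ring-presentation}) have degree $N$ and weight $\lambda$; weight is additive on products by Lemma~\ref{lem:weights-revised}(ii). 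If none exist, the bracket is zero. If some exist, they span the target component shown in the table. For example, for $(h,p)$ one has $N=2$ and $\lambda=0$, and the weight-$0$ degree-$2$ classes are the $y_iy_j$. For $(p_i,t_i)$ one has $N=3s-3$ and $\lambda=-(s-1)\mathbf 1$, which matches $p^{s-1}$. For $(t_{i+1},t_{i+2})$ the weight is $-s e_i-(s-2)\mathbf1$ in degree $4s-5$, matching $y_jp_ip^{s-2}$. The other rows are handled similarly, using $s=1$ in $K$ only where needed.

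The main obstacle is bookkeeping. I must make sure that, in the finite case, no monomial from the list, such as $t_ip_{i+1}^ap_{i+2}^by^\varepsilon$ or $z_i^{(m)}p_i^ay_i^\varepsilon$, accidentally has the weight and degree of some pair not listed in the table. To organize this, I will write the weight of a general basis monomial as an affine function of its exponents. Then I will solve the two linear equations (degree and weight) against each pair's $(N,\lambda)$. Pairs whose equations have no solution with exponents in range, which includes $0\le r<s$ and $1\le m<s$, are eliminated. Cyclic symmetry reduces all of this to $i=0$. In the infinite-order case, only pairs among $h,h_i,p,z_i^{(m)}$ occur, and the same computation leaves only $(h,p)$.
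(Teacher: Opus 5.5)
Your proposal is correct and follows the paper's proof essentially step for step. Both arguments first discard degree-zero pairs. Both then use $[u,v]=\Delta(uv)$, which holds because $\Delta$ kills all generators other than $y_i$, together with the vanishing products of the presentation. Both settle the remaining pairs by the selection rule, counting elementary cochains $[r]_{\alpha,\beta,\gamma}$ of the required degree and weight (your bounds $0\le|\lambda|+N\le3$ and $\lambda_i\le1$ are exactly the paper's total-$x$-degree argument) and matching against the basis monomials of~\cite{GK06}.

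One small caution about your list of examples for the zero-product step: $z_i^{(m)}t_j$ vanishes only for $i\ne j$ or $m\ne s-1$. Indeed $t_iZ_i=\rho^{-4}y_0y_1y_2p^{s-2}\ne0$, which is precisely why $(t_i,Z_i)$ survives in the table.
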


\begin{proof}
\emph{Step 1: pairs excluded by degree.} If \(|u|=|v|=0\), then
\([u,v]\in\operatorname{HH}^{-1}(R)=0\). This excludes all pairs from
\(\{h,h_0,h_1,h_2\}\).

\smallskip
\emph{Step 2: pairs with zero product.} For generators
\(u,v\notin\{y_0,y_1,y_2\}\),
Proposition~\ref{prop:Delta-generators-revised} gives
\(\Delta(u)=\Delta(v)=0\), so formula~\eqref{eq:B-revised}
takes the form
\[
        [u,v]=\Delta(uv),
\]
and every pair with \(uv=0\) gives \([u,v]=0\). The lists of vanishing
products differ in the two cases.

\emph{The case $s=\infty$.}
The following products vanish in $\HH^*(R)$:
\[
        h h_i,\quad h z_i^{(m)},\quad
        h_i h_j,\quad h_i z_j^{(m)},\quad h_i p,\quad
        z_i^{(m_1)}z_j^{(m_2)},\quad z_i^{(m)}p .
\]
The generators \(p_i\), \(t_i\) are absent here, and the last relation
holds for all \(m\). Thus all pairs involving \(h_i\), all
pairs within the family \(z^{(m)}\), and all pairs \((p,z_i^{(m)})\) are eliminated;
only the pair \((h,p)\) remains from the original list.

\emph{The case $1<s<\infty$.}
The products listed above vanish in $\HH^*(R)$,
except for $z_i^{(m)}p$, as do the following additional products:
\[
       h t_i,\quad h_i p_j\ (i\ne j),\quad h_i t_j,
\]
\[
        t_iz_j^{(m)}\ (i\ne j),
        \qquad
        t_iz_i^{(m)}\ (m\ne s-1),
\]
\[
        z_i^{(m)}p\ (m\ne s-1),
        \qquad
        z_i^{(m)}p_j\ (i\ne j,\ m\ne s-1).
\]
Thus the family $z^{(m)}$ enters the subsequent discussion only through
$Z_i=z_i^{(s-1)}$, except for the pairs $(z_i^{(m)},p_i)$, which are not
covered by the monomial relations; these are considered in Step~3.

\smallskip
\emph{Step 3: the remaining pairs.} For the remaining pairs,
we compute the degree $n=|u|+|v|-1$ and weight
$\lambda=\operatorname{wt}(u)+\operatorname{wt}(v)$ from
Table~\eqref{eq:generator-weights-revised} and then check
whether the component $\HH^{n}_{\lambda}(R)$ is zero; the types of basis
cocycles are taken from
\cite{GK06}.

The pair \((h,p)\): degree \(2\), weight
\(\operatorname{wt}(h)+\operatorname{wt}(p)=\mathbf1-\mathbf1=0\).
We consider this case in detail; the others are checked in the same way.

The condition \(\operatorname{wt}([r]_{\alpha,\beta,\gamma})=0\) with
\(\alpha+\beta+\gamma=2\) means that
\[
        \deg_x(r)=(\alpha,\beta,\gamma),
\]
that is, \(r\) is an element of total \(x\)-degree \(2\).

The only such elements of the basis \(B\) are \(a\), \(b\), \(c\), giving the cochains
\[
        [a]_{1,1,0},
        \qquad
        [b]_{0,1,1},
        \qquad
        [c]_{1,0,1}.
\]

Their classes form a basis of the component and
coincide with \(y_0y_1\), \(y_1y_2\), \(y_0y_2\) up to
nonzero factors that are powers of \(\rho\).

The pair \((h,p_i)\): degree \(0+s-1=s-1\), weight
\(\mathbf1-s\,e_i\). These parameters correspond to the class \(Z_i=z_i^{(s-1)}\)
of degree \(s-1\), and the component is one-dimensional: \(K Z_i\).

The pair \((h_i,p_i)\): degree \(s-1\), weight \(\mathbf1-(s+1)e_i\); the total
\(x\)-degree of a representative is \(1\), but the weight coordinates corresponding to the two
other variables are \(1\), requiring \(\deg_{x_j}(r)\ge1\) for
both \(j\ne i\), which is incompatible with degree \(1\). Hence
\([h_i,p_i]=0\).

The pair \((p,Z_i)\): degree \(3+(s-1)-1=s+1\), weight \(-s\,e_i\);
the component is spanned by \(y_0p_i\), \(y_1p_i\), \(y_2p_i\).

The pair \((p,t_i)\): degree \(2s\), weight
\(-\mathbf1+e_i-(s-1)(\mathbf1-e_i)\), which equals the weight of
the product \(p_{i+1}p_{i+2}\); the component is one-dimensional.

The pair \((p_i,z_i^{(m)})\): the degree is \(m+s-1\), and the weight is
\(\mathbf1-(m+1+s)e_i\). The total \(x\)-degree of a possible
representative is \(1\), whereas the weight coordinates corresponding to the two
variables \(x_l\), \(l\ne i\), are \(1\) and require
\(\deg_{x_l}(r)\ge1\) for both such \(l\); this is incompatible, whence
\([p_i,z_i^{(m)}]=0\) for all \(m\).

The pair \((p_i,Z_j)\), \(j\ne i\): degree \(2s-2\), weight
\(\mathbf1-s\, e_i-s\, e_j\). The weight equation has
a unique nonnegative solution: \(r=x_l\) for \(l\ne i,j\), with
indices \(\alpha_i=\alpha_j=s-1\), \(\alpha_l=0\), corresponding to
the class \(t_l\). Thus the target components for the pairs \((p_i,Z_{i+1})\) and
\((p_i,Z_{i+2})\) are \(Kt_{i+2}\) and \(Kt_{i+1}\), respectively.

The pair \((p_i,t_j)\): a nonnegative solution exists only when
\(j=i\); the degree is \(3s-3\), and the weight equals that of \(p^{s-1}\).

The pair \((t_i,Z_i)\): degree \((2s-2)+(s-1)-1=3s-4\), weight
\(\operatorname{wt}(t_i)+\mathbf1-s\,e_i\). These parameters correspond to
the classes \(y_jy_k\,p^{\,s-2}\), and the component is spanned by
\(y_0y_1p^{\,s-2}\), \(y_0y_2p^{\,s-2}\), \(y_1y_2p^{\,s-2}\).

The pair \((t_{i+1},t_{i+2})\): degree \(2(2s-2)-1=4s-5\), and the sum of the weights
corresponds to the classes \(y_j\,p_i\,p^{\,s-2}\); the component is spanned by
\(y_0p_ip^{\,s-2}\), \(y_1p_ip^{\,s-2}\), \(y_2p_ip^{\,s-2}\).

The pair $(p,p_i)$: degree $s+2$, total weight
$-\mathbf1-s\, e_i$; the total $x$-degree of the value would be
$|\lambda|+n=(-3-s)+(s+2)=-1<0$, which is impossible.

The pair $(p_i,p_j)$, $i\ne j$: degree $2s-1$, total weight
$-s\, e_i-s\, e_j$; the total $x$-degree of the value is again
$(-2s)+(2s-1)=-1<0$.

In the last two cases, the weight components are zero, and
Corollary~\ref{cor:selection-revised} implies that the bracket vanishes;
the other pairs give the components indicated in the table. The enumeration is
therefore exhaustive.
\end{proof}

\begin{theorem}\label{prop:B-complete-revised}
The Gerstenhaber bracket on \(\operatorname{HH}^*(R)\) is completely
determined. On pairs involving \(y_i\), it is given by
Tables~\eqref{eq:Di-table-revised}
and~\eqref{eq:Di-finite-revised}. On the other pairs of generators:

\emph{if \(s=\infty\)}, the only nonzero value is
\[
        [h,p]=Y;
\]

\emph{if \(1<s<\infty\)}, the nonzero values are
\[
        [h,p]=Y
\]

\begin{equation}\label{eq:B-resonance-revised}
\begin{aligned}
        [h,p_i]&=Z_i,
        &\qquad
        [p,Z_i]&=U_i,
        &\qquad
        [p,t_i]&=V_i,\\
        [p_i,Z_{i+1}]&=t_{i+2},
        &\qquad
        [p_i,Z_{i+2}]&=t_{i+1},
        &\qquad
        [p_i,t_i]&=W .
\end{aligned}
\end{equation}
\end{theorem}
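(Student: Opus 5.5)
Lemma~\ref{lem:candidate-pairs-revised} already reduces the theorem to a finite list of pairs. All pairs involving \(y_i\) are handled by Lemma~\ref{lem:Euler-revised} and Tables~\eqref{eq:Di-table-revised}, \eqref{eq:Di-finite-revised}. Every other pair vanishes, except for the rows of the table, and for each row the target component is known. So it remains to compute one bracket per row. For generators \(u,v\notin\{y_0,y_1,y_2\}\), Proposition~\ref{prop:Delta-generators-revised} together with \eqref{eq:B-revised} gives \([u,v]=\Delta(uv)\). The plan is therefore to compute \(\Delta\) on the relevant products, namely \(hp\), \(hp_i\), \(pZ_i\), \(pt_i\), \(p_iZ_{i+1}\), \(p_iZ_{i+2}\), \(p_it_i\), \(t_iZ_i\) and \(t_{i+1}t_{i+2}\). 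We do this through the transfer \eqref{eq:Delta-transfer-revised}, using the explicit formulas for \(\Psi_*\) (Proposition~\ref{prop:sparse-Psi}) and \(\Phi_*\) (Proposition~\ref{prop:phi-paths}).

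\textbf{Step 1: representatives of the products.} First we find cocycle representatives of the products as elementary cochains, using the product formula of \cite[Theorem~4.2]{GK06}. For instance, \(hp\) is a scalar multiple of \([w]_{1,1,1}\), and \(hp_i\) is a multiple of \([w]_{s e_i}\).

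\textbf{Step 2: evaluating \(\Delta\).} For \(F=[r]_{\alpha,\beta,\gamma}\) and each basis generator \(e_{pqs'}\) of the target degree, we evaluate \(\Delta(F\Psi_n)(\Phi_{n-1}(e_{pqs'}))\). By Proposition~\ref{prop:phi-paths}, \(\Phi_{n-1}(e_{pqs'})\) is a weighted sum of tensors \(T(\varepsilon)\) in the letters \(x_0,x_1,x_2\). Tradler's formula \eqref{eq:Tradler-R} then requires \(F\Psi_n(T(\varepsilon''))\) for the cyclic rotations \(\varepsilon''\) of \((\varepsilon,v)\), with \(v\in B\). By the weight selection, only \(v\) of a single multidegree can contribute, so in practice \(v\) is a single letter or \(1\). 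When \(v=x_l\), Proposition~\ref{prop:sparse-Psi} tells us that \(\Psi_n(T(\varepsilon''))\) is nonzero only if \(\varepsilon''\) is the distinguished word \(\varepsilon^*_{\alpha\beta\gamma}\), and then it equals \(\rho^{-\alpha\beta}e_{\alpha\beta\gamma}\) or \(e_{\alpha\beta0}\). The computation therefore reduces to a combinatorial count. For each target index \((p,q,s')\) we must determine which words \(\varepsilon\in W(p,q,s')\), extended by a letter \(l\) and then cyclically rotated, become the distinguished word of \((\alpha,\beta,\gamma)\). We then sum the products of the path weight \(w(\varepsilon)\) with \(\rho^{-\alpha\beta}\). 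Because only distinguished words survive, each such sum has very few terms.

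\textbf{Step 3: the case \((h,p)\).} Here we count rotations of \((1^{q},0^{p},\dots)\)-type words that give \((0,2,1)\) or \((1,0)\)-shaped words of length \(3\). This yields coefficients \(\rho^{-2}\) on \([a]_{1,1,0}\), \([b]_{0,1,1}\) and \([c]_{1,0,1}\). We then convert these to \(y_0y_1\), \(y_1y_2\), \(y_0y_2\) using the same product formula, and obtain \(Y\). Each finite-order row is handled the same way. There the target component is one-dimensional, or at most three-dimensional, so it suffices to compute the coefficient of a single elementary cochain in each target class.

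\textbf{Step 4: vanishing of \([t_i,Z_i]\) and \([t_{i+1},t_{i+2}]\).} These two rows do not appear in the theorem, so we must show that \(\Delta(t_iZ_i)=0\) and \(\Delta(t_{i+1}t_{i+2})=0\). The simplest route is to check whether the products themselves vanish in \(\HH^*(R)\), via the relations of \cite{GK06}. If they do not, we compute \(\Delta\) as above and observe that the surviving contributions cancel in pairs in characteristic \(2\).

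The main obstacle is Step 2 in the finite-order cases. The words have length of order \(3s\), and one must track the path weights \(w(\varepsilon)\) and the factors \(\rho^{-\alpha\beta}\) carefully. Using \(\rho^s=1\), all of this must collapse to the single factor \(\rho^{-2}\) in \(U_i\), \(V_i\) and \(W\), and to \(1\) in the other cases. I would first work out the case \(s=3\) explicitly to fix the pattern of contributing rotations, and then argue in general by the block structure of distinguished words.
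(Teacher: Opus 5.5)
Your plan takes a different route from the paper: it evaluates $\Delta(uv)$ directly on cochains through \eqref{eq:Delta-transfer-revised} and \eqref{eq:Tradler-R}. As written it has a genuine gap in Step~2: the claim that ``in practice $v$ is a single letter'' is false for most rows.

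\emph{Why Step~2 fails.} Suppose $uv$ is represented by cochains $[r]_{\alpha,\beta,\gamma}$ of degree $n$, and you evaluate on $\Phi_{n-1}(e_{pqs'})$, whose tensors consist of $n-1$ letters. Weight homogeneity forces the test element $u_n\in B$ to satisfy $|{\deg_x(u_n)}|=4-|{\deg_x(r)}|$.
\begin{itemize}
\item $u_n$ is a letter only for $hp=[w]_{1,1,1}$ and $hp_i$ (e.g.\ $hp_0=[w]_{s,0,0}$).
\item For $pZ_i$, $p_iZ_{i\pm1}$ and $t_iZ_i$ one has $|{\deg_x(r)}|=2$, so $u_n\in\{a,b,c\}$.
\item For $pt_i$, $p_it_i$ and $t_{i+1}t_{i+2}$ one has $|{\deg_x(r)}|=1$, so $u_n=w$.
\end{itemize}
Proposition~\ref{prop:sparse-Psi} says nothing about $\Psi_n$ on tensors containing $a,b,c,w$. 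The recursion \eqref{eq:Psi-recursion} then runs through the branches of $t$ on $a,b,c,w$, which carry two-sided coefficients, and the ``only distinguished words survive'' count is lost.

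\emph{What the proposal actually covers.} The finite-order computations are postponed to a pattern still to be found. Step~4 only asserts a cancellation, yet the products $t_iZ_i=\rho^{-4}y_0y_1y_2p^{s-2}$ and $t_{i+1}t_{i+2}=\rho^{-4}y_{i+1}y_{i+2}p_ip^{s-2}$ are nonzero. So the proposal really handles only $[h,p]$ and $[h,p_i]$, and even there your stated numbers are off. In each case only the rotation $(0,2,1)$ survives, giving $\Delta([w]_{1,1,1})=\rho^{-1}([a]_{1,1,0}+[b]_{0,1,1}+[c]_{1,0,1})$. Meanwhile $y_0y_1=\rho[a]_{1,1,0}$, $y_1y_2=\rho[b]_{0,1,1}$ and $y_0y_2=\rho[c]_{1,0,1}$, and together these give $Y$. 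For $hp_0$, all $s$ rotations of $(0^s)$ are distinguished, giving $s\,b=b$ and hence $Z_0$.

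\emph{The missing idea.} No cochain computation beyond $\Delta(y_i)=1$ is needed. Since $\Delta(u)=\Delta(v)=0$, you have $[u,v]=\Delta(uv)$. The relations behind Theorem~\ref{thm:ring-presentation} rewrite every relevant product as one containing $y$-factors:
\begin{itemize}
\item $hp=\rho^{-2}y_0y_1y_2$ and $hp_i=y_iZ_i$;
\item $Z_ip=\rho^{-2}y_{i+1}y_{i+2}p_i$, $Z_{i+1}p_i=t_{i+2}y_i$ and $Z_{i+2}p_i=t_{i+1}y_i$;
\item $t_ip=\rho^{-2}y_ip_{i+1}p_{i+2}$ and $t_ip_i=\rho^{-2}y_ip^{s-1}$;
\item the two relations for $t_iZ_i$ and $t_{i+1}t_{i+2}$ above.
\end{itemize}
Then apply Proposition~\ref{prop:second-order-revised} with $\Delta(y_i)=1$ and $[y_i,-]=D_i$, which acts diagonally by Lemma~\ref{lem:Euler-revised}. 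Each value takes one line and uses only that $s$ is odd:
\begin{itemize}
\item $\Delta(y_iZ_i)=Z_i+(s-1)Z_i=Z_i$;
\item $\Delta(y_ip^{s-1})=(1+(s-1))p^{s-1}=p^{s-1}$;
\item $\Delta(y_0y_1y_2p^{s-2})=(1+(s-2))(y_0y_1+y_0y_2+y_1y_2)p^{s-2}=0$, and likewise for $t_{i+1}t_{i+2}$.
\end{itemize}
This settles every row, including the two vanishing ones, and never needs $\Psi_n$ on non-letter tensors.
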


\begin{proof}
Since $\Delta$ vanishes on all generators
except $y_i$, formula~\eqref{eq:B-revised}
takes the following form for the pairs under consideration:
$[u,v]=\Delta(uv)$.
We use the presentation of the ring from
Theorem~\ref{thm:ring-presentation}.

\medskip
\emph{The case \(s=\infty\).} By
Lemma~\ref{lem:candidate-pairs-revised}, only the
pair \((h,p)\) remains. From \(\Delta(y_i)=1\) and \([y_i,y_j]=0\), the
second-order formula gives
\[
        \Delta(y_0y_1y_2)=y_0y_1+y_0y_2+y_1y_2,
\]
so the relation \(hp=\rho^{-2}y_0y_1y_2\) yields
\[
        [h,p]=\Delta(hp)=\rho^{-2}\Delta(y_0y_1y_2)=Y .
\]

\medskip
\emph{The case \(1<s<\infty\).} The value \([h,p]=Y\) follows from the same
computation. The other pairs are treated using the relations
\begin{equation}\label{eq:ring-inputs-revised}
\begin{gathered}
        hp_i=y_iZ_i,
        \qquad
        Z_ip=\rho^{-2}y_{i+1}y_{i+2}p_i,\\
        Z_{i+1}p_i=t_{i+2}y_i,
        \qquad
        Z_{i+2}p_i=t_{i+1}y_i,\\
        t_ip=\rho^{-2}y_ip_{i+1}p_{i+2},
        \qquad
        t_ip_i=\rho^{-2}y_ip^{s-1}.
\end{gathered}
\end{equation}

For the pair \((h,p_i)\),
\[
        \Delta(hp_i)=\Delta(y_iZ_i)=Z_i+D_i(Z_i)
        =Z_i+(s-1)Z_i=Z_i,
\]
since $s-1=0$ in $K$.

For the pair \((p,Z_i)\),
\[
        \Delta(Z_ip)=\rho^{-2}\Delta(y_{i+1}y_{i+2}p_i)
        =\rho^{-2}(y_{i+1}+y_{i+2})p_i=U_i,
\]
since \(D_{i+1}(p_i)=D_{i+2}(p_i)=0\)
by~\eqref{eq:Di-finite-revised}.

For the pairs \((p_i,Z_{i+1})\) and \((p_i,Z_{i+2})\),
\[
        \Delta(Z_{i+1}p_i)=\Delta(t_{i+2}y_i)
        =t_{i+2}+D_i(t_{i+2})=t_{i+2},
\]
and, similarly, \(\Delta(Z_{i+2}p_i)=t_{i+1}\).

For the pair \((p,t_i)\),
\[
        \Delta(t_ip)=\rho^{-2}\Delta(y_ip_{i+1}p_{i+2})
        =\rho^{-2}p_{i+1}p_{i+2}=V_i .
\]

For the pair \((p_i,t_i)\),
\[
        \Delta(t_ip_i)=\rho^{-2}\Delta(y_ip^{s-1})
        =\rho^{-2}\bigl(p^{s-1}+(s-1)[y_i,p]\,p^{s-2}\bigr)
        =\rho^{-2}p^{s-1}=W,
\]
since the contribution of the pairs \((y_i,p)\) is \((s-1)p^{s-1}=0\).

\medskip
\emph{Vanishing of the remaining candidates.} By
Lemma~\ref{lem:candidate-pairs-revised}, among pairs outside the displayed list,
the selection rule fails to exclude only \((t_i,Z_i)\) and
\((t_{i+1},t_{i+2})\). From \(t_iZ_i=\rho^{-4}y_0y_1y_2p^{\,s-2}\), with
\(r=s-2\), we obtain
\[
        \Delta\bigl(y_0y_1y_2p^{\,r}\bigr)
        =(y_0y_1+y_0y_2+y_1y_2)p^{\,r}
        +r\,(y_0y_1+y_0y_2+y_1y_2)p^{\,r}=0,
\]
since \(s\) is odd and hence \(r=s-2=1\) in \(K\); therefore
\([t_i,Z_i]=0\). Similarly, from
\(t_{i+1}t_{i+2}=\rho^{-4}y_{i+1}y_{i+2}p_ip^{\,s-2}\), we obtain
\[
        \Delta\bigl(y_{i+1}y_{i+2}p_ip^{\,s-2}\bigr)
        =(y_{i+1}+y_{i+2})p_ip^{\,s-2}
        +(s-2)(y_{i+1}+y_{i+2})p_ip^{\,s-2}=0,
\]
where the contribution of the pairs \((y_{i+1},p_i)\), \((y_{i+2},p_i)\) is zero
by~\eqref{eq:Di-finite-revised}; hence \([t_{i+1},t_{i+2}]=0\).

All other pairs are excluded by Lemma~\ref{lem:candidate-pairs-revised}.
\end{proof}
By the Leibniz rule and bilinearity, the values of the bracket
on the generators determine it on the whole algebra.
Thus Theorems~\ref{thm:ring-presentation}
and~\ref{prop:B-complete-revised} completely describe
the Gerstenhaber algebra structure on $\HH^*(R)$.

\subsection{The second-order formula}

The operator \(\Delta\) is not a derivation, so its
values on products cannot be recovered from its values on
the factors by the Leibniz rule. Its deviation from being a derivation
is measured precisely by the Gerstenhaber bracket: the defining relation
of the BV structure when \(\Char K=2\) gives
\begin{equation}\label{eq:second-order-N2}
        \Delta(uv)=\Delta(u)\,v+u\,\Delta(v)+[u,v].
\end{equation}
The following proposition extends~\eqref{eq:second-order-N2} to
a product of any number of factors; together with Proposition~\ref{prop:Delta-generators-revised} and
Theorem~\ref{prop:B-complete-revised}, it allows us to compute
\(\Delta\) on all products of generators.

\begin{proposition}\label{prop:second-order-revised}
Let \(g_1,\ldots,g_N\) be homogeneous elements of
\(\operatorname{HH}^*(R)\). Then
\begin{equation}\label{eq:second-order-revised}
        \Delta(g_1\cdots g_N)
        =\sum_{r=1}^{N}\Delta(g_r)\prod_{\ell\ne r}g_\ell
        +\sum_{1\le r<q\le N}[g_r,g_q]\prod_{\ell\ne r,q}g_\ell .
\end{equation}
\end{proposition}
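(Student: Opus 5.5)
We argue by induction on \(N\), using only the second-order relation~\eqref{eq:second-order-N2} together with the Poisson identity of Definition~\ref{def:Ger} and the graded commutativity of \(\HH^*(R)\). Since \(\Char K=2\), all signs disappear, so the graded Lie algebra and the Poisson identity simply read
\[
[a,bc]=[a,b]\,c+b\,[a,c],
\]
and the cup product is commutative.

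\emph{Base cases.} For \(N=1\) the statement is \(\Delta(g_1)=\Delta(g_1)\), with the second sum empty. For \(N=2\) it is exactly~\eqref{eq:second-order-N2}.

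\emph{Inductive step.} Suppose the formula holds for \(N-1\ge 2\). Put \(u=g_1\cdots g_{N-1}\), which is homogeneous, and \(v=g_N\). Applying~\eqref{eq:second-order-N2} gives
\[
\Delta(uv)=\Delta(u)\,g_N+u\,\Delta(g_N)+[u,g_N].
\]
We handle the three terms separately.

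1. Substitute the induction hypothesis into \(\Delta(u)\,g_N\). This produces every term \(\Delta(g_r)\prod_{\ell\ne r}g_\ell\) with \(r<N\), and every term \([g_r,g_q]\prod_{\ell\ne r,q}g_\ell\) with \(r<q<N\).

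2. The term \(u\,\Delta(g_N)\) is precisely the summand of the first sum with \(r=N\).

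3. For \([u,g_N]\), use the graded antisymmetry of the bracket (in characteristic~\(2\) this is symmetry) to write \([u,g_N]=[g_N,g_1\cdots g_{N-1}]\). Then expand by repeated application of the Poisson identity. This requires a small sub-induction on the number of factors, showing that \([a,b_1\cdots b_k]=\sum_r [a,b_r]\prod_{\ell\ne r}b_\ell\). The result is
\[
[u,g_N]=\sum_{r<N}[g_r,g_N]\prod_{\ell\ne r,N}g_\ell,
\]
where commutativity is used to reorder the factors. These are exactly the terms of the second sum with \(q=N\).

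Adding the three contributions gives~\eqref{eq:second-order-revised} for \(N\).

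The only delicate point is the bookkeeping in step~3: one must check that the sign factors \((-1)^{(|a|-1)|b|}\) in the Poisson identity and in the antisymmetry can indeed be dropped. They can, since \(-1=1\) in \(K\). One must also check that moving \(g_N\) past other factors is allowed, which follows from commutativity of \(\HH^*(R)\) (Theorem~\ref{thm:Ger}). Beyond this, the argument is a routine verification.
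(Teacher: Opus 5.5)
Your proof is correct and follows essentially the same route as the paper. Like the paper, you argue by induction on \(N\), put \(u=g_1\cdots g_{N-1}\) and \(v=g_N\) into the second-order relation, expand \(\Delta(u)\) by the induction hypothesis, and expand \([u,g_N]\) by the Leibniz (Poisson) rule for the bracket; your extra bookkeeping (symmetry of the bracket, the iterated Poisson identity, and signs vanishing in characteristic \(2\)) only makes explicit what the paper leaves implicit.
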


\begin{proof}
Induction on \(N\); for \(N=2\), the formula coincides
with~\eqref{eq:second-order-N2}. The induction step follows by substituting
\(u=g_1\cdots g_{N-1}\), \(v=g_N\) into~\eqref{eq:second-order-N2} and
expanding \(\Delta(u)\) by the induction hypothesis and
\([u,g_N]\) by the Leibniz rule
for the bracket.
\end{proof}

\subsection{The full formula in the case of infinite order}

Let \(\operatorname{ord}(\rho)=\infty\), and let
\begin{equation}\label{eq:normal-monomial-infinite}
        M=h^{\varepsilon}
        \prod_{j=0}^{2}h_j^{\eta_j}
        \prod_{j=0}^{2}y_j^{\alpha_j}\,
        p^{\beta}
        \prod_{j=0}^{2}\prod_{m\ge1}
        \bigl(z_j^{(m)}\bigr)^{\gamma_{j,m}}
\end{equation}
be an arbitrary product of generators of $\HH^*(R)$ with nonnegative integer exponents, only finitely many of
the $\gamma_{j,m}$ being nonzero. Set
\begin{equation}\label{eq:omega-infinite}
        \omega^{\infty}_i(M)
        =\varepsilon+\sum_{j\ne i}\eta_j+\beta
        +\sum_{m\ge1}m\,\gamma_{i,m}
        +\sum_{j\ne i}\sum_{m\ge1}\gamma_{j,m} .
\end{equation}

\begin{theorem}\label{thm:BV-infinite-revised}
Let \(\operatorname{ord}(\rho)=\infty\). For every product
of generators \(M\) of the form~\eqref{eq:normal-monomial-infinite},
\begin{equation}\label{eq:BV-infinite-revised}
        \Delta(M)
        =\sum_{i=0}^{2}
        \alpha_i\bigl(1+\omega^{\infty}_i(M)\bigr)\,
        \frac{M}{y_i}
        +\varepsilon\beta\,Y\,\frac{M}{hp},
\end{equation}
where the coefficients are taken in $K$.
Here and below, the notation $M/g$ means deleting
the factors constituting $g$ from the chosen
product $M$; terms for which
the corresponding factors are absent are omitted.
\end{theorem}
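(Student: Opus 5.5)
We apply Proposition~\ref{prop:second-order-revised} to the product $M$, viewed as a list of $N$ factors $g_1,\dots,g_N$ (each generator repeated according to its exponent). The first sum only involves $\Delta(g_r)$. By Proposition~\ref{prop:Delta-generators-revised}, $\Delta(g_r)$ vanishes unless $g_r=y_i$, in which case it equals $1$. So the first sum contributes $\sum_i\alpha_i\,M/y_i$: each of the $\alpha_i$ copies of $y_i$ gives the same term. This is the summand ``$1$'' in $1+\omega_i^\infty(M)$.

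The second sum runs over unordered pairs $\{r,q\}$ of positions. Pairs of two equal generators give $[g,g]=0$, as noted after~\eqref{eq:B-revised}. For pairs of distinct generators, Theorem~\ref{prop:B-complete-revised} (case $s=\infty$) together with Lemma~\ref{lem:candidate-pairs-revised} shows that the bracket vanishes unless one entry is some $y_i$, apart from the pair $(h,p)$. There are $\varepsilon\beta$ pairs of type $(h,p)$, each contributing $Y\cdot M/(hp)$. This gives the last term of~\eqref{eq:BV-infinite-revised}.

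Pairs $(y_i,y_j)$ vanish by~\eqref{eq:Di-table-revised}. For a pair $(y_i,g)$ with $g\neq y_j$, Lemma~\ref{lem:Euler-revised} gives $[y_i,g]=D_i(g)=c_i(g)\,g$, where $c_i(g)\in K$ is read off from~\eqref{eq:Di-table-revised}:
\[
c_i(h)=1,\qquad c_i(h_j)=1-\delta_{ij},\qquad c_i(p)=1,\qquad c_i(z_j^{(m)})=m\ \text{if } j=i,\quad c_i(z_j^{(m)})=1\ \text{if } j\ne i.
\]
In such a pair, deleting both factors and multiplying by $c_i(g)g$ yields $c_i(g)\,M/y_i$. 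Summing over the $\alpha_i$ copies of $y_i$ and over all non-$y$ factors $g$ of $M$ with multiplicity gives $\alpha_i\sum_g c_i(g)\,M/y_i$. The inner sum $\sum_g c_i(g)$ is exactly $\omega_i^\infty(M)$ as defined in~\eqref{eq:omega-infinite}. Combining the three contributions yields~\eqref{eq:BV-infinite-revised}.

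The main point needing care is the validity of Proposition~\ref{prop:second-order-revised} for repeated factors and the bookkeeping of multiplicities. We would verify that the proposition holds for arbitrary lists of homogeneous elements (its inductive proof never uses distinctness), so repeated factors are allowed. We would also check that all coefficients are reduced in $K$ of characteristic $2$, so no sign issues arise.

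Finally, when $M$ is zero in $\HH^*(R)$ (for instance if some $\alpha_i\ge2$), both sides must still agree. This is automatic, since the formula is derived identically for any representation of $M$ as a product. The interpretation of $M/g$ as deletion from the chosen product makes each term well defined: it is a product of classes, possibly zero.
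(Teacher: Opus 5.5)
Your proposal is correct and follows the paper's proof step for step. You expand $M$ via Proposition~\ref{prop:second-order-revised}, use $\Delta(y_i)=1$ and the vanishing of $\Delta$ on the other generators for the first sum, assemble $\omega^{\infty}_i(M)$ from the $D_i$-eigenvalues of Table~\eqref{eq:Di-table-revised} for the pairs containing $y_i$, and count the $\varepsilon\beta$ pairs $(h,p)$ to get the term with $Y$; your remarks on repeated factors and on the case $M=0$ are valid bookkeeping that the paper leaves implicit.
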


\begin{proof}
Apply formula~\eqref{eq:second-order-revised} to the expression of \(M\) as
a product of generators.

In the first sum, only the factors \(y_i\) contribute nonzero terms; there are
\(\alpha_i\) of them, each contributing \(M/y_i\). The pairwise contributions involving \(y_i\)
are \([y_i,g]=D_i(g)\); by Table~\eqref{eq:Di-table-revised},
each factor \(g\) of the product \(M/y_i\) is an eigenvector
of the operator \(D_i\), and the sum of the corresponding eigenvalues,
counted with the multiplicities of the factors, is precisely
\(\omega^{\infty}_i(M)\). Together these give the first term of
formula~\eqref{eq:BV-infinite-revised}. Pairs of identical generators
make no contribution, since \([g,g]=0\).

Among pairs of distinct generators not involving \(y_i\),
Theorem~\ref{prop:B-complete-revised} shows that the only nonzero value is that of
the pair \((h,p)\), namely \(Y\); the number of ways to choose it in
the product \(M\) is \(\varepsilon\beta\), giving the second
term.
\end{proof}

\subsection{The full formula in the case of finite order}

Let \(1<s=\operatorname{ord}(\rho)<\infty\), and let
\begin{equation}\label{eq:normal-monomial-finite}
        M=h^{\varepsilon}
        \prod_{j=0}^{2}h_j^{\eta_j}
        \prod_{j=0}^{2}y_j^{\alpha_j}\,
        p^{\beta}
        \prod_{j=0}^{2}p_j^{\pi_j}
        \prod_{j=0}^{2}t_j^{\vartheta_j}
        \prod_{j=0}^{2}\prod_{m=1}^{s-1}
        \bigl(z_j^{(m)}\bigr)^{\gamma_{j,m}}
\end{equation}
be an arbitrary product of generators of $\HH^*(R)$ with nonnegative integer exponents.
Set $\zeta_i=\gamma_{i,s-1}$, the exponent
with which $Z_i=z_i^{(s-1)}$ occurs in $M$, and
\begin{equation}\label{eq:omega-finite-revised}
        \omega^{s}_i(M)
        =\varepsilon+\sum_{j\ne i}\eta_j
        +\beta+\pi_i+\vartheta_i
        +\sum_{m=1}^{s-1}m\,\gamma_{i,m}
        +\sum_{j\ne i}\sum_{m=1}^{s-1}\gamma_{j,m} .
\end{equation}

\begin{theorem}\label{thm:BV-finite-revised}
Let \(1<s=\operatorname{ord}(\rho)<\infty\). For every
product of generators \(M\) of the form~\eqref{eq:normal-monomial-finite},
\begin{equation}\label{eq:BV-finite-revised}
\begin{aligned}
        \Delta(M)
        ={}&\sum_{i=0}^{2}
        \alpha_i\bigl(1+\omega^{s}_i(M)\bigr)\,\frac{M}{y_i}
        +\varepsilon\beta\,Y\,\frac{M}{hp}
        +\sum_{i=0}^{2}\varepsilon\pi_i\,Z_i\,\frac{M}{hp_i}\\
        &+\sum_{i=0}^{2}\beta\zeta_i\,U_i\,\frac{M}{pZ_i}
        +\sum_{i=0}^{2}\beta\vartheta_i\,V_i\,\frac{M}{pt_i}\\
        &+\sum_{i=0}^{2}\pi_i\zeta_{i+1}\,t_{i+2}\,
        \frac{M}{p_iZ_{i+1}}
        +\sum_{i=0}^{2}\pi_i\zeta_{i+2}\,t_{i+1}\,
        \frac{M}{p_iZ_{i+2}}\\
        &+\sum_{i=0}^{2}\pi_i\vartheta_i\,W\,\frac{M}{p_it_i},
\end{aligned}
\end{equation}
where the coefficients are taken in \(K\), and
terms are omitted whenever a factor in the denominator is absent from the chosen product. Since
products of generators span \(\operatorname{HH}^*(R)\) as
a \(K\)-vector space, this computes the operator \(\Delta\) on the whole
algebra.
\end{theorem}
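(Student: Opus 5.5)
The plan is to imitate the proof of Theorem~\ref{thm:BV-infinite-revised}. We write $M$ as the product of its generator factors, each counted with multiplicity, and apply the second-order formula~\eqref{eq:second-order-revised}. Its right-hand side splits into a \emph{linear part}, $\sum_r\Delta(g_r)\prod_{\ell\ne r}g_\ell$, and a \emph{pairwise part}, $\sum_{r<q}[g_r,g_q]\prod_{\ell\ne r,q}g_\ell$. We then evaluate the two parts separately, using Proposition~\ref{prop:Delta-generators-revised}, the tables~\eqref{eq:Di-table-revised} and~\eqref{eq:Di-finite-revised}, and Theorem~\ref{prop:B-complete-revised}. Throughout, all integer coefficients are reduced in $K$, so $s=1$ there.

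\emph{Step 1: the linear part.} By Proposition~\ref{prop:Delta-generators-revised}, $\Delta$ vanishes on every generator except $y_i$, and $\Delta(y_i)=1$. Each of the $\alpha_i$ occurrences of $y_i$ therefore contributes $M/y_i$, giving $\sum_i\alpha_i\,M/y_i$.

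\emph{Step 2: pairwise terms containing some $y_i$.} Fix one occurrence of $y_i$ and pair it with every other factor $g$ of $M$. Then $[y_i,g]=D_i(g)$, and by~\eqref{eq:Di-table-revised} and~\eqref{eq:Di-finite-revised} every generator is an eigenvector of $D_i$. The eigenvalues are as follows:
\begin{itemize}
\item $1$ for $h$, for $p$, and for $h_j$ with $j\ne i$;
\item $0$ for $h_i$ and for every $y_j$;
\item $m$ for $z_i^{(m)}$, and $1$ for $z_j^{(m)}$ with $j\ne i$;
\item $1$ for $p_i$ and $t_i$, and $0$ for $p_j$ and $t_j$ with $j\ne i$.
\end{itemize}
Since eigenvector factors replace $g$ by a scalar multiple of $g$, each term equals the eigenvalue times $M/y_i$. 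Summing over all other factors gives exactly $\omega^s_i(M)\,M/y_i$, by~\eqref{eq:omega-finite-revised}.

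Two points need care here. First, pairs $(y_i,y_j)$ contribute $D_i(y_j)=0$, so they are not double counted. Second, pairs $(y_i,y_i)$ vanish because $[g,g]=0$. Multiplying by the $\alpha_i$ choices of the $y_i$ occurrence and adding Step~1 produces the first sum of~\eqref{eq:BV-finite-revised}.

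\emph{Step 3: pairwise terms with neither factor a $y$.} Identical generators give $[g,g]=0$. For distinct generators, Theorem~\ref{prop:B-complete-revised} says the only nonzero brackets are those on the list $(h,p)$, $(h,p_i)$, $(p,Z_i)$, $(p,t_i)$, $(p_i,Z_{i+1})$, $(p_i,Z_{i+2})$, $(p_i,t_i)$. Their values are $Y$, $Z_i$, $U_i$, $V_i$, $t_{i+2}$, $t_{i+1}$, $W$ respectively.

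Choosing one occurrence of each factor of such a pair can be done in $\varepsilon\beta$, $\varepsilon\pi_i$, $\beta\zeta_i$, $\beta\vartheta_i$, $\pi_i\zeta_{i+1}$, $\pi_i\zeta_{i+2}$, $\pi_i\vartheta_i$ ways respectively. This matches the remaining terms of~\eqref{eq:BV-finite-revised}, and when a count is zero the corresponding term is correctly omitted. We must also check that $z_i^{(m)}$ with $m<s-1$ pairs nontrivially with nothing, which is part of Theorem~\ref{prop:B-complete-revised} together with Lemma~\ref{lem:candidate-pairs-revised}.

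\emph{Main obstacle.} The main obstacle is bookkeeping rather than new mathematics. First, we must confirm that Theorem~\ref{prop:B-complete-revised}, including the vanishing of $[t_i,Z_i]$ and $[t_{i+1},t_{i+2}]$, really exhausts the nonzero brackets. Second, the second-order formula applies to \emph{any} product, so we need not reduce $M$ to a normal form first. Third, the expressions $M/g$ must be read as products in $\HH^*(R)$, where some of them may vanish by the relations; this is harmless.

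Finally, since products of generators span $\HH^*(R)$ and $\Delta$ is linear, the formula determines $\Delta$ on the whole algebra.
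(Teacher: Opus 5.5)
Your proposal is correct and is essentially the paper's proof. The paper likewise applies the second-order formula~\eqref{eq:second-order-revised} to $M$, gathers the pairs $(y_i,g)$ through the $D_i$-eigenvalues, which sum to $\omega^s_i(M)$, and counts occurrences of the nonzero pairs from Theorem~\ref{prop:B-complete-revised}; it just says ``repeat the argument of the infinite-order case,'' while you write out that bookkeeping explicitly.
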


\begin{proof}
Repeat the argument in the proof of Theorem~\ref{thm:BV-infinite-revised}, using Proposition~\ref{prop:second-order-revised} and taking into account the additional values of \(D_i\) and of the bracket from Theorem~\ref{prop:B-complete-revised}.
\end{proof}

\subsection{The boundary case \(s=1\)}\label{sec:s1}

When \(s=1\), that is, \(\rho=1\), the relations take the form
\(x_{i+1}x_i=x_ix_{i+1}\), the algebra \(R\) is commutative, and
\[
        R\cong K[x_0,x_1,x_2]/(x_0^2,x_1^2,x_2^2)
        \cong K[(C_2)^3]\cong K[C_2]^{\otimes3}.
\]
The corresponding cohomology algebra is the one in case~II.1
of~\cite[Theorem~2.1]{GK06}, that is, \(\HH^*(R)\cong A_6\); in
particular, the family \(z_i^{(m)}\) is absent here, the classes \(t_i\)
coincide with \(u_i\), and the set of generators is
\(\{u_i,p_i\}_{i\in\{0,1,2\}}\). Therefore
the formulas of Theorem~\ref{thm:BV-finite-revised}, which assumes \(s>1\), do not
apply to this case, and it is considered separately.

The BV structure in this case was obtained
in~\cite[Theorem~4.3]{LZ14} by reduction to the one-variable case using the isomorphism of BV algebras
\[
\HH^*(A\otimes B)\cong\HH^*(A)\otimes\HH^*(B).
\]
In our case, we have
\[
        \HH^*(R)
        \cong K[u_0,u_1,u_2,p_0,p_1,p_2]/(u_0^2,u_1^2,u_2^2),
        \qquad |u_i|=0,\ |p_i|=1,
\]
where \(u_i=[x_i]_{0,0,0}\) and \(p_i=[1]_{e_i}\).

The operator \(\Delta\) is given on monomials
\[
        M=\prod_{i=0}^{2}u_i^{\varepsilon_i}p_i^{a_i},
        \qquad \varepsilon_i\in\{0,1\},\ a_i\ge0,
\]
by the formula
\begin{equation}\label{eq:BV-s1}
        \Delta(M)=\sum_{i=0}^{2}\varepsilon_ia_i\,\frac{M}{u_ip_i}
\end{equation} where the coefficients are taken in \(K\), and
terms with \(\varepsilon_i=0\) or \(a_i=0\) are omitted. Since
\(\Char K=2\), a term is nonzero precisely when
\(\varepsilon_i=1\) and \(a_i\) is odd. In particular,
\[
        \Delta(u_i)=\Delta(p_i)=0,
        \qquad
        \Delta(u_ip_j)=\delta_{ij},
\]
and the Gerstenhaber bracket, recovered from~\eqref{eq:BV-s1} by
the defining relation of the BV structure, is determined by the Poisson rule and the relations
\[
        [u_i,u_j]=0,
        \qquad
        [p_i,p_j]=0,
        \qquad
        [u_i,p_j]=\delta_{ij}.
\]

\section*{Acknowledgments}

The author thanks A.~I.~Generalov for proposing the problem
of computing the BV structure on the Hochschild cohomology
of the Liu--Schulz algebras.

\end{document}